\documentclass[10pt,a4paper]{amsart}

    \usepackage[T1]{fontenc}
    \usepackage{amsmath}
    \usepackage{amssymb}
    \usepackage{amsthm}
    \usepackage{enumitem}
    \usepackage[left=4cm,right=4cm,top=3.5cm,bottom=3cm]{geometry}
    \usepackage{mathrsfs}
    \usepackage{verbatim}
    \usepackage{bbm}
    \usepackage{xcolor}
    \usepackage{mathtools}
    \usepackage[colorlinks=true, linktocpage=true, linkcolor=red!70!black, citecolor=green!50!black]{hyperref}
    \usepackage[normalem]{ulem}

    \usepackage[
      backend=biber,
      sorting=nyt,
      date=year,
      sortcites,
      style=alphabetic,
      abbreviate=true,
      giveninits=true,
      maxbibnames=10,
      maxcitenames=10,
      url=false,
      doi=true
    ]{biblatex}
    
    \DeclareSourcemap{
      \maps[datatype=bibtex]{
        \map[overwrite]{ 
          \step[fieldsource=shortjournal,fieldtarget=journaltitle]
        }
      }  
    }
    \bibliography{references}

    \providecommand{\eps}{\varepsilon}

    \newcommand{\norm}[1]{\lVert#1\rVert}

    \newcommand{\weakstarto}{\overset{*}{\rightharpoonup}}
    
    \renewcommand{\d}{\mathrm{d}}
    
    \AtBeginDocument{
      \let\div\relax
      \DeclareMathOperator{\div}{div}
    }

    \DeclareMathOperator{\cof}{cof}

    \DeclareMathOperator{\spt}{spt}

    \DeclareMathOperator{\dist}{dist}

    \DeclareMathOperator{\loc}{loc}

    \DeclareMathOperator*{\wslim}{w^*-lim}
    
    \def\XXint#1#2#3{\setbox0=\hbox{$#1{#2#3}{\int}$}\vcenter{\hbox{$#2#3$}}\kern-.5\wd0}

    \newcommand{\mres}{\mathbin{\vrule height 1.6ex depth 0pt width 0.13ex\vrule height 0.13ex depth 0pt width 1.3ex}}
    \DeclareMathOperator{\LL}{L}
    \DeclareMathOperator{\WW}{W}
    \DeclareMathOperator{\BB}{B}
    \DeclareMathOperator{\CC}{C}
    \DeclareMathOperator{\rmb}{b}
    \DeclareMathOperator{\rmc}{c}

    \newcommand{\FF}{\mathbf{F}}
    \newcommand{\GG}{\mathbf{G}}
    \renewcommand{\H}{\mathcal{H}}

    \newcommand{\mbdry}{\partial^{\mathrm{m}}}
    \newcommand{\rbdry}{\partial^{\mathrm{\ast}}}

    \newcommand{\N}{\ensuremath{\mathbb{N}}}

    \newcommand{\R}{\ensuremath{\mathbb{R}}}

    \theoremstyle{definition}
    \newtheorem{theorem}{Theorem}[section]
    \newtheorem*{theorem*}{Theorem}
    \newtheorem{corollary}[theorem]{Corollary}
    \newtheorem{definition}[theorem]{Definition}
    \newtheorem{example}[theorem]{Example}
    \newtheorem{lemma}[theorem]{Lemma}

    \newtheorem{remark}[theorem]{Remark}

    \numberwithin{equation}{section}

    \title[Admissibility criteria for normal traces and Cauchy fluxes]{Admissibility criteria for \\ normal traces and Cauchy fluxes}
    \author{Christopher Irving}
    \address{Christopher Irving,
    Department of Mathematics and Statistics, Georgetown University,
    3700 Reservoir Road NW, Washington, D.C., 20057, USA
    }
    \email{ci152@georgetown.edu}

    \author{Akram Sharif}
    \address{Akram Sharif, 
    Faculty of Mathematics, Institute of Geometry, Technische Universit\"at Dresden, 
    Zellescher Weg 12-14, 01069 Dresden
    }
    \email{akram.sharif@tu-dresden.de}
    \date{\today}

    \keywords{Cauchy fluxes, Divergence-measure fields, Minkowski content, finite perimeter, admissibility criteria}
    \subjclass[2020]{28C05, 74A10, 49Q15}

    \dedicatory{In memory of Miroslav \v{S}ilhav\'y, 
    for his foundational \\ contributions to mathematical continuum mechanics.}
\begin{document}

\begin{abstract}
  We compare notions of admissible surfaces for Cauchy fluxes, formulated as understanding when the normal trace of the underlying stress field can be represented by a measure.
  If this field is unbounded, the problem of admissibility is necessitated by the fact that the normal trace need not admit a measure representation on every regular surface, but only on ``almost all'' such surfaces.
  We compare an approach based on a precise majorant introduced by \v{S}ilhav\'y (Arch.\ Ration.\ Mech.\ Anal.\ 116.3 (1991)) with a Minkowski-type condition introduced by Chen, Torres and the first author (Arch.\ Ration.\ Mech.\ Anal.\ 249.6 (2025)) by showing that, under mild geometric conditions, the former condition implies the latter.
  We also show, by means of an explicit construction, that the latter admissibility condition can allow for arbitrary measure concentrations as a normal trace.
\end{abstract}

\maketitle

\section{Introduction}

\subsection{The problem of fluxes}
In mathematical continuum mechanics, a longstanding problem raised by Noll in \cite{Noll1959} concerns the rigorous derivation of stresses and formulation of balance laws when the underlying quantities 
{are highly irregular}.
{Of} particular interest is to rigorously derive generalizations of \emph{Cauchy's stress theorem}, which classically asserts that there exists a {stress field} $\FF$ such that a flux or {contact interaction} $\mathcal F(S)$ through a suitably smooth surface $S$ can be represented as
\begin{equation}
  \label{eq:Cauchy}
  \mathcal F(S) = \int_{S} \FF\cdot \nu_S \,d\mathcal H^{n-1},
\end{equation}
where $\nu_S$ is the unit normal field of $S$ and $\mathcal H^{n-1}$ denotes the $(n-1)$-dimensional Hausdorff measure. 
We will consider vector fields, however this theory extends to tensor fields by working componentwise.

Gurtin \& Martins \cite{GurtinMartins1976} proposed studying set functions $\mathcal F$ defined on suitable surfaces $S$, referred to as \emph{Cauchy fluxes}, for which the balance law
\begin{equation}\label{eq:balancelaw}
    \mathcal F(\partial E) = \mu(E),
\end{equation}
or a weak variant thereof, holds.
Subject to additivity and boundedness assumptions, we seek to find a field $\FF$ such that a representation of the form \eqref{eq:Cauchy} holds.
Here $\mu$ represents the \emph{production} of a quantity, which we assume to be a signed Radon measure to allow for singular concentrations.
For fluxes admitting the representation \eqref{eq:Cauchy} and satisfying the balance law, we have $\div \FF = \mu$, which naturally leads us to consider \emph{divergence-measure fields}.

In deriving a representation \eqref{eq:Cauchy} for a Cauchy flux, a central issue is to identify a suitable class of \emph{admissible surfaces} for which the flux is defined.
In particular, one seeks a suitable class $\mathcal B$ of \emph{bodies}, i.e.\ subsets $E\subset \R^n$, such that the flux is defined on \emph{parts} of the boundary $S \subset \partial E$.
Here it was proposed by Banfi \& Fabrizio \cite{BanfiFabrizio1979} to consider the class $\mathcal P$ of (normalized) \emph{sets of finite perimeter}, which provides a broad class of bodies supporting a Gauss-Green theorem (see \cite[Theorem~4.5.6]{Federer1996}).
This was adapted by Ziemer \cite{Ziemer1989}, and it was later established in \cite{ChenTorres2005,Silhavy2005} that the Gauss-Green theorem extends to bounded divergence-measure fields on sets of finite perimeter, which culminated in a characterization of Cauchy fluxes for bounded fields in \cite{ChenTorresZiemer2009}.

In general, however, one may encounter singular fields which are unbounded, such as those associated to concentrated loads as considered by Boussinesq \cite{Boussinesq1878} and Flamant \cite{Flamant1892}  (see \cite{Podio-Guidugli2005, Schuricht2008, Podio-GuidugliSchuricht2012}). 
This leads us to study fields which are merely locally integrable.
In this case, as observed in \cite{Silhavy1985}, we cannot hope to define the flux on every regular surface (see also \cite[Example~2.5]{Silhavy2009}), and only on an admissible class containing ``almost all surfaces''  depending on the flux, which we seek to specify.
In this direction \v{S}ilhav\'y \cite{Silhavy1985,Silhavy1991} introduced the admissibility condition
\begin{equation}\label{eq:h_admissible_intro}
  E \in \mathcal P \ \mbox{such that} \ \int_{\rbdry E} h \,\d \mathcal H^{n-1} < \infty,
\end{equation} 
where $h$ is a suitable \emph{majorant function} for the flux (see Definition~\ref{def:precise_majorant}) and $\rbdry E$ is the reduced boundary; such sets will be called \textit{$h$-admissible}.
Using this admissible class, Degiovanni, Marzocchi and Musesti established in \cite{DegiovanniEtAl1999} a one-to-one correspondence between unbounded divergence-measure fields and Cauchy fluxes, valid over sets satisfying \eqref{eq:h_admissible_intro}. Similar results were established in \cite{MarzocchiMusesti2001} and \cite{Schuricht2007a} for Cauchy interactions and contact interactions respectively.

Recently, Chen, Torres and the first author considered in \cite{ChenIrvingTorres2025} the setting where the underlying flux may be a vector measure.
In this generality, there is no apparent analogue of \eqref{eq:h_admissible_intro}, which led the authors to introduce a new admissibility condition, which in the context of integrable fields takes the following form:
\begin{equation}\label{eq:m_admissible_intro}
  U \subset \mathbb R^n \  \mbox{is open such that}\ \liminf_{\eps \downarrow 0} \frac1{\eps} \int_{U \cap B_{\eps}(\partial U)} m \,\d x < \infty.
\end{equation} 
Here $m$ is any majorant of $\FF$ in that $\lvert \FF \rvert \leq m$.
In the context of divergence-measure fields, a similar notion was introduced earlier by {\v{S}ilhav\'y} in \cite{Silhavy2009} as a sufficient condition for the distributional normal trace to be represented by a Radon measure,  and appears to be entirely different from the $h$-admissibility condition \eqref{eq:h_admissible_intro}.

The goal of the present work is to compare the two admissibility criteria \eqref{eq:h_admissible_intro} and \eqref{eq:m_admissible_intro}, which we study from the perspective of normal traces associated to divergence-measure fields.
Since we consider unbounded fields, the representation \eqref{eq:Cauchy} must be understood in a generalized sense through the \emph{distributional normal trace} of $\FF$ on $\partial E$, defined as 
\begin{equation}
  \langle \FF \cdot \nu, \phi \rangle_{\partial E} = \int_{E} \nabla \phi \cdot \FF \,\d x + \int_E \phi \,(\div \FF) \quad\mbox{for all } \phi \in \CC^1_{\rmb}(\Omega),
\end{equation}
\emph{i.e.}\ via the Gauss-Green formula.
Conditions \eqref{eq:h_admissible_intro}, \eqref{eq:m_admissible_intro} are  sufficient conditions to ensure this normal trace is represented by a Radon measure \cite{Silhavy1991,Silhavy2009,Schuricht2007a}, in which case we can understand \eqref{eq:Cauchy} as $\mathcal F(S) = \langle \FF \cdot \nu, \mathbbm{1}_S \rangle_{\partial E}$ for Borel subsets $S \subset \partial E$. 
For further properties of this distributional normal trace, we refer the reader to the aforementioned references and
\cite{ChenFrid1999,ChenFrid2003,SchurichtSchonherr2025,ChenComiTorres2019,Irving2025,ChenLiTorres2020,CrastaDeCicco2019,ComiCrastaDeCiccoMalusa2024}.

 \subsection{Main results}

Our first result shows that, under mild geometric assumptions on the surface, the $h$-admissibility condition \eqref{eq:h_admissible_intro} implies the Minkowski-type condition \eqref{eq:m_admissible_intro}, with an associated estimate. 
We defer the precise definitions of the notions involved.

\begin{theorem}
    \label{thm:intro_comparison}
    Let $\FF \in \LL^1(\mathbb R^n;\mathbb R^n)$ with precise majorant $h$ (as in Definition~\ref{def:precise_majorant}) and 
    let $E \subset \mathbb R^n$ be a set of finite perimeter such that $\partial E$ satisfies a uniform lower density condition (Definition~\ref{def:lower_density}).
    Then there exists $\gamma = \gamma(n,{E})>0$ such that
    \begin{equation}
        \label{eq:intro_F_bound}
          \liminf_{\eps\downarrow0} \frac{1}{2\eps} \int_{B_{\eps}({\partial E})} \lvert \FF \rvert \,\mathrm{d}x \leq \gamma\int_{{\partial E}} h \,\mathrm{d}\mathcal H^{n-1}.
    \end{equation}
\end{theorem}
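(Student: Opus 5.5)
The plan is to compare the volume integral over the tube $B_\eps(\partial E)$ with a surface integral over $\partial E$ by a covering argument, and then pass to the limit using the defining property of the precise majorant $h$. We may assume $\int_{\partial E} h\,\d\mathcal H^{n-1}<\infty$, since otherwise there is nothing to prove. Fix $\eps\in(0,r_0)$, where $r_0$ and $c>0$ are the scale and constant of the uniform lower density condition (Definition~\ref{def:lower_density}):
\[
\mathcal H^{n-1}(\partial E\cap B_r(x))\ge c\,r^{n-1}\quad\text{for } x\in\partial E,\ r<r_0 .
\]

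First, choose a maximal $\eps$-separated family $\{x_i\}\subset\partial E$. Then the balls $B_{\eps/2}(x_i)$ are pairwise disjoint, and
\[
B_\eps(\partial E)\subset \bigcup_i B_{2\eps}(x_i).
\]
Since $\lvert\FF\rvert\le h$ a.e., we get $\int_{B_\eps(\partial E)}\lvert\FF\rvert \le \sum_i\int_{B_{2\eps}(x_i)}h\,\d x$.

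For each $i$ and each $y\in\partial E\cap B_{\eps/2}(x_i)$ we have $B_{2\eps}(x_i)\subset B_{3\eps}(y)$. Averaging over such $y$ and using the lower density bound $\mathcal H^{n-1}(\partial E\cap B_{\eps/2}(x_i))\ge c(\eps/2)^{n-1}$ gives
\[
\int_{B_{2\eps}(x_i)}h \le \frac{2^{n-1}}{c\,\eps^{n-1}}\int_{\partial E\cap B_{\eps/2}(x_i)}\int_{B_{3\eps}(y)}h\,\d x\,\d\mathcal H^{n-1}(y).
\]
Summing over $i$, using the disjointness of the $B_{\eps/2}(x_i)$, and dividing by $2\eps$, we expect
\[
\frac1{2\eps}\int_{B_\eps(\partial E)}\lvert\FF\rvert\,\d x\le \gamma\int_{\partial E}\Big(\,\XXint-- \int_{B_{3\eps}(y)}h\,\d x\Big)\d\mathcal H^{n-1}(y),
\]
with $\gamma=\gamma(n,c)$, and hence $\gamma$ depends only on $n$ and $E$. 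This step is routine.

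The main obstacle is the limit $\eps\downarrow0$. Pointwise, the averages $\XXint--\int_{B_{3\eps}(y)}h$ converge to $h(y)$ at every $y\in\partial E$: this is exactly what the precise majorant provides, since $h$ agrees everywhere with its precise representative and so is meaningfully evaluated on the null set $\partial E$. However, Fatou's lemma runs in the wrong direction, so pointwise convergence alone does not bound the limit of the integrals. I would try first to use the regularity built into Definition~\ref{def:precise_majorant}: if $h$ is (or can be majorised by) an upper semicontinuous function equal to a decreasing limit of continuous $g_k\ge h$ with $\int_{\partial E}g_1<\infty$, then for each $k$ the averages of $g_k$ converge uniformly on the relevant compact part of $\partial E$. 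This gives
\[
\limsup_{\eps\downarrow0}\int_{\partial E}\XXint--\int_{B_{3\eps}(y)}h\le\int_{\partial E}g_k ,
\]
and monotone convergence in $k$ yields the bound $\int_{\partial E}h$.

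If the definition instead only gives Lebesgue-point information, I would obtain the domination by working with the family of radii directly. One option is to replace $\liminf_\eps$ by an average over $\eps\in(\delta,2\delta)$ and exploit Fubini. Another is to extract a weak$^*$ limit of the measures $\eps^{-1}\lvert\FF\rvert\,\mathcal L^n\mres B_\eps(\partial E)$, show that it is supported on $\partial E$, and bound its density against $h\,\mathcal H^{n-1}\mres\partial E$ via the lower density estimate. Passing to a liminf along a subsequence is enough, which matches the form of \eqref{eq:intro_F_bound}.
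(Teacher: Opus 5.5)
Your covering step is sound: a maximal $\eps$-separated family with disjoint balls $B_{\eps/2}(x_i)$ does the same job as the Besicovitch covering in the paper. There is, however, a genuine gap at the step you flag, and your first move, $\int_{B_{2\eps}(x_i)}\lvert\FF\rvert\le\int_{B_{2\eps}(x_i)}h$, creates it. After that move you need
\[
\int_{\partial E}\mathcal L^n(B_{3\eps})^{-1}\int_{B_{3\eps}(y)}h\,\d x\,\d\mathcal H^{n-1}(y)\lesssim\int_{\partial E}h\,\d\mathcal H^{n-1},
\]
at least along a sequence $\eps\downarrow0$. That is essentially the theorem for $h$, with $h$ as its own majorant, and Definition~\ref{def:precise_majorant} gives nothing of the sort.

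Your justification is false. The definition only says that the mollifications $F\ast\rho_{\delta_k}$ of $F$ converge where $h<\infty$. These are not ball averages of $h$, and convergence holds only along the chosen $\delta_k$. Ball averages of $h$ need not converge to $h(y)$ on $\partial E$. Take $F=2\chi\,\mathbbm{1}_{\{x_n>0\}}$, with a cutoff $\chi\equiv1$ near a point $y$ of $\{x_n=0\}$. Then $F\ast\rho_{\delta_k}(y)=1$ for all large $k$. Just above the hyperplane, $F\ast\rho_{\delta_k}$ increases monotonically from about $1$ to $2$, and just below it decreases from about $1$ to $0$. One checks that the ball averages of $h$ at $y$ converge to $h(y)+1$.

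The semicontinuity route also points the wrong way. The series part $\lvert F_1\rvert+\sum_k\lvert F_{k+1}-F_k\rvert$ is an increasing limit of continuous functions, hence lower semicontinuous. Moreover $h$ is in general not locally bounded, so no continuous $g_1\ge h$ exists.

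More fundamentally, the series part of $h$ records oscillation of $F$ at the scales $\delta_k$. Oscillation located off $\partial E$, at scales much smaller than its distance to $\partial E$, is invisible to $h$ on $\partial E$. So nothing controls tube integrals of $h$ by $\int_{\partial E}h$. Your fallbacks do not help: averaging over $\eps\in(\delta,2\delta)$ does not touch this issue, and extracting a weak$^*$ limit of $\eps^{-1}\lvert\FF\rvert\,\mathcal L^n\mres B_\eps(\partial E)$ presupposes the uniform bound you are trying to prove.

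The missing idea is to use the construction of $h$ in the opposite direction, on $\partial E$ itself. For each nonnegative component one has $h_i^{\pm}\ge F_i^{\pm}\ast\rho_{\delta_k}$ everywhere, and $\rho\ge\theta\mathbbm{1}_{B_\kappa}$ for some $\theta,\kappa>0$. Using $\lvert\FF\rvert\le\sum_i(F_i^++F_i^-)$, this gives
\[
h(y)\ge\frac{\theta}{\delta_k^n}\int_{B_{\kappa\delta_k}(y)}\lvert\FF\rvert\,\d x\qquad\mbox{for all $y\in\partial E$ and $k\in\mathbb N$.}
\]
So surface values of $h$ dominate volume integrals of $\lvert\FF\rvert$, but only at the scales $\delta_k$.

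Your scheme goes through if you never replace $\lvert\FF\rvert$ by $h$ in the volume:
\begin{enumerate}[label=(\arabic*)]
\item Average $\int_{B_{2\eps}(x_i)}\lvert\FF\rvert$ over $y\in\partial E\cap B_{\eps/2}(x_i)$ exactly as you did, keeping $\lvert\FF\rvert$ inside.
\item Choose $\eps=\eps_k:=\kappa\delta_k/3$, so that $\int_{B_{3\eps_k}(y)}\lvert\FF\rvert\,\d x\le\theta^{-1}\delta_k^n h(y)$.
\item Sum over the disjoint balls to get
\[
\frac1{2\eps_k}\int_{B_{\eps_k}(\partial E)}\lvert\FF\rvert\,\d x\le\frac{2^{n-2}3^n}{c\,\theta\,\kappa^n}\int_{\partial E}h\,\d\mathcal H^{n-1}
\]
for all $k$ large enough that $\eps_k<2r_0$.
\end{enumerate}
No limit passage or domination argument is needed. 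This is the paper's route: Lemma~\ref{lem: general bound with h} followed by a Besicovitch covering. It also explains why the statement has only a $\liminf$: the bound is available only along $\eps_k\downarrow0$.
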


To the best of our knowledge, this is the first result which compares the $h$-admissibility condition \eqref{eq:h_admissible_intro} and the Minkowski-type condition \eqref{eq:m_admissible_intro}, even for regular domains.

If $\FF \equiv 1$, then estimate \eqref{eq:intro_F_bound} reduces to a bound of the form
\begin{equation}\label{eq:intro_classic_minkowski}
  \liminf_{\eps \downarrow 0} \frac1{2\eps} \mathcal L^n(B_{\eps}(\partial E)) \leq \gamma \mathcal H^{n-1}(\partial E),
\end{equation} 
which relates the $(n-1)$-dimensional lower Minkowski content to the Hausdorff measure.
It is known that the uniform lower density condition is sufficient for such an estimate to hold (see \cite[Section 2.6]{AmbrosioEtAl2000}), and that already \eqref{eq:intro_classic_minkowski} can fail in its absence (see \cite{Kraft2016}).

On the other hand, we can show the converse fails by exhibiting a divergence-measure field $\FF$ with precise majorant $h$ and a regular surface $E$ such that
\begin{equation}
    \liminf_{\eps\downarrow0} \frac{1}{2\eps} \int_{B_{\eps}({\partial E})} \lvert \FF \rvert \,\mathrm{d}x < \infty, \quad\mbox{but}\quad \int_{{\partial E}} h \,\mathrm{d}\mathcal H^{n-1} = \infty.
\end{equation}
This will follow as a consequence of a construction of vector fields with a prescribed normal trace, which is our second main result.

\begin{theorem}\label{thm:intro_prescribed_trace}
    Let $\Omega \subset \mathbb R^n$ be a bounded Lipschitz domain and $\sigma \in \mathcal M(\partial \Omega)$.
    Then there exists a vector field $\FF \in \LL^1(\mathbb R^n; \mathbb R^n)$ with $\div \FF \in \LL^1(\R^n)$ such that 
    \begin{equation}\label{eq:intro_prescribed_minkowski}
        \limsup_{\eps \downarrow 0} \frac1{2\eps} \int_{B_\eps(\partial \Omega)} \lvert \FF \rvert \,\d x \leq C \lvert\sigma\rvert(\partial \Omega)<\infty
    \end{equation}
    and
    \begin{equation}\label{eq:intro_prescribed_trace}
        \langle \FF \cdot \nu, \cdot \rangle_{\partial\Omega} = \sigma.
    \end{equation}
    Moreover, if $\int_{\partial\Omega} \d\sigma=0$, then $\FF$ can be chosen to be divergence-free in a neighborhood of $\overline{\Omega}$.
\end{theorem}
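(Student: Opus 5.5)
We prove Theorem~\ref{thm:intro_prescribed_trace} by reducing to the case of a flat piece of boundary, building a field concentrated in a thin layer along $\partial\Omega$ whose outward flux is $\sigma$, and closing the divergence with an $\LL^1$ function. We first treat a \emph{model problem} in the half-space. Given a finite measure $\tau$ on $\R^{n-1}$ with compact support, we take a mollifier $\rho$ and set, for $x=(x',t)$ with $t\in(0,1)$,
\begin{equation*}
  \GG(x',t) = -\psi(t)\,(\rho_t * \tau)(x')\, e_n ,
\end{equation*}
where $\rho_t$ is the rescaled kernel at scale $t$ and $\psi$ is a cutoff with $\psi=1$ near $0$. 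Because $\lvert \rho_t*\tau\rvert$ has mass at most $\lvert\tau\rvert(\R^{n-1})$ on each slice, Fubini gives
\begin{equation*}
  \frac1{2\eps}\int_{\{\lvert t\rvert<\eps\}}\lvert\GG\rvert \le \tfrac12\lvert\tau\rvert(\R^{n-1}).
\end{equation*}
Since $\rho_t*\tau \weakstarto \tau$ as $t\downarrow0$, the normal trace of $\GG$ on $\{t=0\}$ is $\tau$, with sign fixed by the orientation. The divergence $\partial_t(\psi\,\rho_t*\tau)$, however, need not be integrable. To repair this we add a tangential component. Writing $\partial_t\rho_t = \div_{x'} K_t$ for a suitable vector kernel $K_t$ (possible because $\int\partial_t\rho_t=0$), we take $\GG' = K_t*\tau$, with $\lVert K_t\rVert_{\LL^1}\lesssim 1$ uniformly in $t$. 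Then the divergence reduces to $\psi'(t)(\rho_t*\tau)$, which is in $\LL^1$, and the layer estimate still holds up to a constant.

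We then \emph{localize} to $\Omega$. We cover $\partial\Omega$ by finitely many bi-Lipschitz charts $\Phi_i$ flattening the boundary, and take a partition of unity $\{\chi_i\}$, so that $\sigma=\sum\chi_i\sigma$. Each piece $\chi_i\sigma$ is pushed to a measure $\tau_i$ on $\R^{n-1}$. We transport $\GG_i$ back to $\R^n$ by the Piola transform $\FF_i = (\det D\Phi_i)^{-1}D\Phi_i\,\GG_i\circ\Phi_i^{-1}$, which preserves the distributional divergence and normal traces, including the $\mathcal H^{n-1}$ Jacobian factors that we absorb into $\tau_i$. The Piola transform is only $\LL^\infty$-controlled for Lipschitz $\Phi_i$. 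This is enough, since we only need $\LL^1$ bounds on layers. The bi-Lipschitz property also gives $\Phi_i^{-1}(B_\eps(\partial\Omega))\subset\{\lvert t\rvert<L\eps\}$, which yields the Minkowski bound \eqref{eq:intro_prescribed_minkowski} with $C$ depending on the Lipschitz constants. Summing, $\FF=\sum\FF_i$ is in $\LL^1$ and has $\LL^1$ divergence. The trace identity \eqref{eq:intro_prescribed_trace} follows by testing the Gauss--Green definition with $\phi\in\CC^1_{\rmb}$ and passing to the limit through slices $\{t=s\}$, $s\downarrow0$.

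For the \emph{divergence-free refinement} when $\int_{\partial\Omega}\d\sigma=0$, we note that $\div\FF=f\in\LL^1$ and $\int f=\sigma(\partial\Omega)=0$, the latter from the Gauss--Green formula with $\phi\equiv1$ near $\overline\Omega$. We then subtract a field $\mathbf{H}$ with $\div\mathbf{H}=f$, $\mathbf{H}\in\LL^1$, supported in a compact set. We can choose $\mathbf{H}$ smooth and supported away from $\partial\Omega$ on the inside and the outside, using Bogovskii-type solutions in annular regions. Such an $\mathbf{H}$ does not affect the trace or the layer estimate. Alternatively, and more simply, we can use the flat construction with $\psi$ cut off far away: the divergence lives where $\psi'\neq0$, and we then move it off $\overline\Omega$ with a compactly supported correction on a connected neighborhood.

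The main obstacle is this last step. One must verify that the zero-mean condition lets us cancel $f$ on a region disjoint from a neighborhood of $\overline\Omega$, in the sense that the cancellation region must not overlap $\partial\Omega$. The interior part of $f$ has mass $\sigma(\partial\Omega)$, which is not zero in general. Cancelling it therefore requires transporting flux from inside to outside through $\partial\Omega$, which would alter the trace. Our first approach is to construct the layer field so that its interior and exterior portions each carry the full trace. The interior divergence is then moved to a single ball $B\Subset\Omega$, and the exterior divergence to a ball outside. On each side, $\div(\cdot)$ has zero integral over that connected region when $\sigma(\partial\Omega)=0$, so a compactly supported $\LL^1$ solution exists there. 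This yields a field that is divergence-free near $\overline\Omega$ on the exterior side. The precise bookkeeping of which side carries which mass is the delicate point.
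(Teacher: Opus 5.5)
There is a genuine gap in your model problem, and it affects both the main claim and the refinement.

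\textbf{The model field is one-sided.} Your field is defined only for $t\in(0,1)$; the factor $\tfrac12$ in your layer bound confirms that it vanishes for $t<0$. Its normal component therefore jumps across $\{t=0\}$ by $\rho_t\ast\tau\weakstarto\tau$. By \eqref{eq: dist normal trace}, the divergence of $\GG$ on all of $\R^n$ is $f\,\mathcal L^n\mres\{t>0\}-\tau$, with $\tau$ viewed as a measure on $\{t=0\}$. After transport and summation you get $\div\FF=f\,\mathcal L^n\mres\Omega-\sigma$. So the sentence ``$\FF$ has $\LL^1$ divergence'' holds only inside $\Omega$, and $\div\FF\in\LL^1(\R^n)$ fails as soon as $\sigma\neq0$. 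For the same reason, no correction supported away from $\partial\Omega$ can make this $\FF$ divergence-free near $\overline\Omega$; this is the real source of the difficulty in your last paragraph.

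The missing idea is to continue the field to $t<0$ with even normal part and odd tangential part:
\[
\GG(x',t)=\psi(\lvert t\rvert)\bigl(\operatorname{sgn}(t)\,K_{\lvert t\rvert}\ast\tau,\;-\rho_{\lvert t\rvert}\ast\tau\bigr),\qquad K_s(x')=-s^{-n}x'\rho(x'/s).
\]
Then the fluxes from the two sides match, and $\div\GG$ is supported only where $\psi'(\lvert t\rvert)\neq0$. Where $\psi(\lvert t\rvert)=1$ this is exactly the kernel $\Phi$ of Lemma~\ref{lem:G_potential}; your $K_t$ is its tangential part. The two-sided trace identity in property~\ref{item:G_trace} is precisely what yields property~\ref{item:G_divfree}. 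Separately, no $\mathcal H^{n-1}$-Jacobian should be absorbed into $\tau_i$: with orientation-preserving charts, the distributional trace transforms by plain push-forward, cf.\ \eqref{eq:Fi_trace}.

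\textbf{The divergence-free refinement.} Your treatment of the case $\int_{\partial\Omega}\d\sigma=0$ contains a false step: the exterior divergence need not have zero integral over a connected region. The set $\R^n\setminus\overline\Omega$ may have several components $C_j$. For a two-sided, compactly supported field, $\int_{C_j}\div\FF\,\d x=-\sigma(\partial C_j)$, and this need not vanish. For example, take $\Omega$ an annulus and $\sigma=\delta_{x_1}-\delta_{x_2}$ with $x_1,x_2$ on different boundary spheres. The paper handles this with the fields $k_j\boldsymbol\zeta_j$, which place the excess divergence in balls $\overline B_{r_j}(x_j)$ outside the neighbourhood $A$.

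With your normal cut-off, once the construction is two-sided, no exterior correction is needed at all. Indeed, $\div\FF$ is bounded and supported in the images of $\{a\le\lvert t\rvert\le b\}$, where $\spt\psi'\subset[a,b]$ with $a>0$. Hence it lies in a compact set disjoint from $\partial\Omega$, and its exterior part already lies outside a neighbourhood of $\overline\Omega$. It then suffices to apply Bogovskii's operator on a smooth connected $D\Subset\Omega$ containing $\Omega\cap\spt(\div\FF)$, using $\int_\Omega\div\FF\,\d x=\sigma(\partial\Omega)=0$. This correction vanishes near $\partial\Omega$, so it changes neither the trace nor the layer integrals.

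Repaired this way, your route is simpler than the paper's. There, multiplying by $\chi_i$ after the Piola transform produces the divergence $\sum_i\nabla\chi_i\cdot\FF_i$, which reaches up to $\partial\Omega$. That forces $\WW^{1,1}_0$-Bogovskii corrections on $\Omega$ and on every exterior component, plus an argument that these corrections do not alter the layer integrals. The only extra care on your side is to take $b$ small enough that each $\spt\GG_i$ stays inside its chart.
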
 

This provides a converse to \cite[Theorem~2.4]{Silhavy2009}, and implies that \eqref{eq:m_admissible_intro} allows for arbitrary measure concentrations of the normal trace, which is in contrast to fields satisfying the $h$-admissibility condition \eqref{eq:h_admissible_intro} (see \cite[Proposition~6.5]{Schuricht2007a}).

There are several existing ways to construct a vector field with prescribed trace as in \eqref{eq:intro_prescribed_trace}; the sharp trace space in this setting was shown to be $\WW^{-1,1}(\partial\Omega)$ in \cite{Irving2025}, and one can also use layer potential methods from \cite{MitreaTaylor2000} noting that $\mathcal M(\partial\Omega) \hookrightarrow \WW^{-\frac1p,p}(\partial\Omega)$ for $1 \leq p < \frac{n}{n-1}$.
 However, with both approaches it is unclear whether the constructed field also satisfies \eqref{eq:intro_prescribed_minkowski}.
We will instead employ a simpler potential construction in the half-space, and the constructed field will satisfy $\FF \in \LL^p(\mathbb R^n;\mathbb R^n)$ and $\div \FF \in \LL^p(\mathbb R^n)$ in the sharp range $1 \leq p < \frac{n}{n-1}$ (see \cite{ChenComiTorres2019}), which moreover improves with the fractional regularity of $\sigma$ (see Remark~\ref{rem:improvement_integrability}).

These results will then be applied to the theory of Cauchy fluxes associated to unbounded fields, comparing the notions given in \cite{Silhavy1991,DegiovanniEtAl1999} based on \eqref{eq:h_admissible_intro} and the recently introduced notion in \cite{ChenIrvingTorres2025} which uses \eqref{eq:m_admissible_intro} instead.
Our results suggest that the Minkowski-type condition \eqref{eq:m_admissible_intro} is more general, as on Lipschitz domains it is both implied by the $h$-admissibility condition \eqref{eq:h_admissible_intro}, and allows for fluxes with arbitrary measure concentrations.

\textbf{Organization:} 
We will collect basic notation and recall the notion of divergence-measure fields in Section~\ref{sec:admissibility}, along with known results concerning their normal traces.
Theorems~\ref{thm:intro_comparison} and \ref{thm:intro_prescribed_trace} will be established in Sections~\ref{sec:minkowski} and \ref{sec:prescribed} respectively.
Finally, in Section~\ref{sec:fluxes} we will apply our results to the theory of Cauchy fluxes, comparing in particular the formulations in \cite{DegiovanniEtAl1999} and \cite{ChenIrvingTorres2025}.

\section{Preliminaries}\label{sec:admissibility}

\subsection{Notation}
  Throughout the paper we work in $\mathbb R^n$ with $n \geq 2$, and denote by $\mathcal L^n$ and $\mathcal H^k$ the $n$-dimensional Lebesgue and $k$-dimensional Hausdorff measures respectively.
  For any set $E\subset \R^n$ and $\eps>0$, we denote the \textit{$\eps$-neighborhood of $E$} by
 \begin{equation}
    B_\eps(E) := E+B_\eps = \left\lbrace x+y \,:\, x\in E,\, y\in B_\eps \right\rbrace,
 \end{equation}
 where $B_{\eps} = B_\eps(0)$ denotes the $\eps$-ball centred at the origin in $\R^n$. 
  If $U \subset \mathbb R^n$ is open, taking $E=\partial U$ we obtain a two-sided $\eps$-neighborhood $B_{\eps}(\partial U)$, which compares to the interior and exterior neighborhoods $B_{\eps}(\partial U)\cap U$ and $B_{\eps}(\partial U)\setminus\overline U$ respectively.

 For a set $E \subset \mathbb R^n$ we denote the distance function of $E$ by
 \begin{equation}
    d_E(x) := \dist(x,E) = \inf_{y\in E} | x-y |,
 \end{equation} 
 which is $1$-Lipschitz and moreover satisfies $\lvert \nabla d_E(x)\rvert =1$ for $\mathcal L^n$-a.e.\ $x \in \mathbb R^n \setminus \overline E$.

We assume familiarity with sets of finite perimeter, see for instance \cite[Section 3.3]{AmbrosioEtAl2000}, however will briefly fix some relevant notation.
Given a Borel measurable set $E \subset \mathbb R^n$, the measure-theoretic interior and exterior are defined as
\begin{equation}
  E^1 = \{ x \in \mathbb R^n \colon \operatorname{dens}_E(x) = 1\}, \quad E^0 =\{ x \in \mathbb R^n \colon \operatorname{dens}_E(x) = 0\}
\end{equation} 
respectively, where
\begin{equation}\label{eq:density_function}
  \operatorname{dens}_E(x) = \lim_{r \downarrow 0}\frac{\mathcal L^n(E \cap B_r(x))}{\mathcal L^n(B_r(x))}.
\end{equation} 
Moreover, we denote the measure-theoretic boundary of $E$ as $\mbdry E = \mathbb R^n \setminus (E^1 \cup E^0)$ and say that $E$ is \emph{normalized} if $E = E^1$.
If $E$ is a set of finite perimeter, the reduced boundary will be denoted as $\rbdry  E$ and we have $\rbdry  E \subset \mbdry E \subset \partial E$ as well as $\mathcal H^{n-1}(\mbdry E \setminus \rbdry E)=0$.
Finally, we denote the generalized outer unit normal on $\rbdry E$ by $\nu_E$.

\subsection{Divergence-measure fields}

 For $1\leq p \leq \infty$ and $\Omega \subset \mathbb R^n$ an open set, a vector field $\FF\in \LL^{p}\left(\Omega;\R^{n}\right)$ is called \textit{divergence-measure field} if  
	\begin{equation*}
    \lvert\div \FF\rvert(\Omega) := \sup\left\lbrace\int_{\Omega} \FF\cdot \nabla\phi\,\d x \,:\,\phi\in \CC_{\rmc}^1(\Omega),\norm{\phi}_{\LL^{\infty}}\leq 1  \right\rbrace < \infty.
	\end{equation*}
  We denote the space of such fields as $\mathcal D\mathcal M^p\left(\Omega;\R^{n}\right)$.
  A direct consequence is that for all $\FF\in\mathcal D\mathcal M^p \left(\Omega;\R^{n}\right)$, there exists a unique $\mu\in\mathcal M(\Omega)$ such that
	\begin{equation} \label{eq:divergence measure fields}
    \int_{\Omega}\phi\,\d\mu = -\int_{\Omega} \FF\cdot \nabla \phi\,\d x \quad\mbox{for all $\phi \in \CC^1_{\rmc}(\Omega)$},
	\end{equation}
  and we write $\div \FF = \mu$. By approximation, one can show that \eqref{eq:divergence measure fields} extends to all $\phi\in \mathrm{Lip}_{\rmb}(\Omega)$, cf.\ \cite[Definition 5.1]{DegiovanniEtAl1999}.
  We also denote by $\mathcal{DM}_{\loc}^p(\Omega;\mathbb R^n)$ the space of fields $\FF$ such that $\FF \rvert_U \in \mathcal{DM}^p(U;\mathbb R^n)$ for all $U \Subset \Omega$.

  For a divergence-measure field $\FF\in\mathcal D\mathcal M^1\left(\Omega;\R^n\right)$ and a Borel set $E\subset \Omega$ the \textit{(weak) normal trace} of $\FF$ is the functional $\left<\FF\cdot \nu,\cdot\right>_{\partial E}:\mathrm{Lip}_{\rmb}(\Omega)\to \R$ given by
\begin{equation*}
    \left< \FF\cdot \nu,\phi\right>_{\partial E} := \int_{E} \phi\, \d(\div \FF) + \int_{E} \FF\cdot \nabla\phi \,\d x.
\end{equation*}
One can show (cf.\ \cite[Section 4]{ChenComiTorres2019}) that the support of $\langle\FF\cdot \nu,\cdot\rangle_{\partial E}$ as a distribution is supported on the topological boundary $\partial E$ and satisfies 
\begin{equation}
    \label{eq: dist normal trace}
    \left<\FF\cdot \nu,\cdot\right>_{\partial E} = \mathbbm{1}_E\mathrm{div}\, \FF - \mathrm{div}(\mathbbm{1}_E\FF) \quad \mbox{ as distributions in $\mathbb \Omega$},
\end{equation}
where $\mathbbm{1}_E$ denotes the characteristic function of $E$. 
Motivated by finding a representation of the form \eqref{eq:Cauchy}, of particular interest is when the normal trace \emph{is a measure} in that
there exists  $\sigma\in \mathcal{M}(\partial E)$ such that
\begin{equation}
    \label{eq: normal trace measure}
    \left<\FF\cdot \nu,\phi\right>_{\partial E} = \int_{\partial E} \phi \,\d\sigma, \quad \mbox{for all } \phi\in \mathrm{Lip}_{\rmb}(\Omega).
\end{equation}
We write $\sigma = (\FF \cdot \nu)_{\partial E}$ if this holds.
It follows from \eqref{eq: dist normal trace} that $\left<\FF\cdot \nu,\phi\right>_{\partial E}$ is a measure if and only if $\mathbbm{1}_E\FF\in\mathcal{DM}^1(\R^n,\R^n)$.
However, the normal trace space is strictly larger in general (see \cite[Theorem~1.7]{Irving2025}), which raises the question of when a representation of the form \eqref{eq: dist normal trace} holds.

\subsection{Admissibility conditions}

We collect some known conditions which ensure the normal trace of $\FF$ on $\partial E$ is represented by a measure.

\begin{definition}[Precise majorant $h$]\label{def:precise_majorant}
  Let $F \in \LL^1(\Omega)$ be non-negative with $\Omega \subset \mathbb R^n$ open, and fix a standard radial mollifier $\rho \in \CC^{\infty}_{\rmc}(B_1)$.
  Extending $F$ by zero to $\mathbb R^n$, we have $F \ast \rho_{\delta} \to \FF$ as $\delta \to 0$ in $\LL^1(\mathbb R^n)$, so we can choose a sequence $\delta_k \downarrow 0$ such that $F_k := F \ast \rho_{\delta_k}$ satisfies
  \begin{equation}
    \label{eq: bound mollification}
    \lVert F_{k+1}- F_{k} \rVert_{\LL^1(\Omega)} \leq 2^{-k}
  \end{equation} 
  for each $k\in\mathbb N$. Then setting 
  \begin{equation}
    F^{\ast} = \begin{cases}
      \displaystyle\lim_{k\to \infty} F_k(x) &\text{ if the limit exists}, \\
      0 &\text{ otherwise},
    \end{cases}
  \end{equation} 
  we then define
  \begin{equation}\label{eq:precise_h}
    h(x) := \lvert F_1(x) \vert + \sum_{k=1}^{\infty} \lvert F_{k+1}(x) - F_{k}(x)\rvert + \lvert F^{\ast}(x)\rvert
  \end{equation} 
  setting $h(x) = \infty$ if the series diverges. 
  This $h$ will be a \emph{precise majorant} of $F$, and $\FF^{\ast}$ will be the associated \emph{precise representative}. 
  It follows from \eqref{eq: bound mollification} that $h\in \LL^{1}(\Omega)$, hence proving that $h(x)<\infty$ for $\mathcal{L}^{n}$-a.e.\ $x\in\Omega$. 
  By construction, we also have that $|F_{k}|\leq h$ pointwise in $\Omega$ for all $k\in\mathbb{N}$.

  For vector fields $\FF = (F_1,\cdots,F_n) \in \LL^1(\Omega;\mathbb R^n)$, we decompose $F_i = F_i^+ - F_i^-$ into non-negative and non-positive parts, and let $h_i^{\pm}$ be the precise majorant for $F_i^{\pm}$, chosen along a common sequence $\delta_k \downarrow 0$. Then we define the precise majorant of $\FF$ as
  \begin{equation}\label{eq:precise_majorant_2}
    h := \sum_{i=1}^n ( h_i^+ + h_i^-).
  \end{equation}
\end{definition}

  In the context of Cauchy fluxes, this construction was first introduced by \v{S}ilhav\'y in \cite[Proposition~4.3]{Silhavy1991}, which was later adapted in \cite{DegiovanniEtAl1999,Schuricht2007a} and other works (see also \cite[Theorem~4.9]{Brezis2011}).
  Their construction is in fact more general; it suffices to take any function $h$ and a sequence $(\FF_k)$ of approximations satisfying
  \begin{align}
    \label{eq:h_property1}\lvert \FF_k \rvert(x) \leq h(x) \quad&\mbox{for all $k \in \mathbb N$ and  $x \in \Omega$}, \\
    \label{eq:h_property2}\lim_{k\to\infty} \FF_k(x) = \FF(x) \quad&\mbox{for all $x \in \Omega$ such that $h(x)<\infty$.}
  \end{align} 
  For the results presented in the sequel however, we will need the specific form defined above.

\begin{remark}
  While $\lvert \FF \rvert \leq h$ holds $\mathcal L^n$-a.e., the majorant may be considerably larger in general.
  Indeed let $\FF \in \LL^{\infty}(\Omega)$ be a bounded and discontinuous field. Then since the mollifications $\FF_k$ are smooth, it follows that $\FF_k$ converges pointwise $\mathcal L^n$-a.e.\,but not locally uniformly. Thus the summation in \eqref{eq:precise_h} does not converge locally uniformly near discontinuities, and hence $h$ is locally unbounded near points of discontinuity.
  Thus we conclude that $h \notin \LL^{\infty}(\Omega)$ necessarily.
\end{remark}

We record two sufficient conditions for the normal trace to be represented by a measure on the boundary.
The first is a result of {Schuricht} based on the precise majorant $h$ constructed above, which refines earlier results in \cite{Silhavy1991,DegiovanniEtAl1999}.

\begin{lemma}[{\cite[Proposition 6.5]{Schuricht2007a}}]\label{lem:schuricht_trace}
  Let $\Omega\subset \R^n$ be an open set, $\FF\in\mathcal{DM}^1(\Omega;\R^{ n})$ and let $h$ be a precise majorant of $\FF$ in that \eqref{eq:h_property1}, \eqref{eq:h_property2} hold. 
  Then, if $E \Subset \Omega$ is a normalized set of finite perimeter which is $h$-admissible in that
  \begin{equation}\label{eq:h_admissible}
    \int_{\rbdry E} h \,\d\mathcal H^{n-1} < \infty,
  \end{equation} 
  then the normal trace of $\FF$ on $\partial E$ is represented by a measure supported on $\mbdry  E$.
  Moreover there exists $g_E \in \LL^{\infty}(\Omega,\lvert \div \FF\rvert)$ such that 
  \begin{equation}
    (\FF \cdot \nu)_{\partial E} = -g_E \div \FF \mres \mbdry E + \FF^{\ast} \cdot \nu_E \,\mathcal{H}^{n-1}\mres \rbdry E.
  \end{equation} 
  Here $\FF^{\ast}(x) = \lim_{k \to \infty}\FF_k(x)$ is the pointwise limit of approximations, which is defined whenever $h(x)<\infty$ by \eqref{eq:h_property2}, and $g_E(x) = \operatorname{dens}_E(x)$ at all $x\in\Omega$ where the latter function, as defined in \eqref{eq:density_function}, exists.
\end{lemma}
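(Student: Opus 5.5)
The plan is to approximate $\FF$ by the smooth fields $\FF_k$ from Definition~\ref{def:precise_majorant}, apply the classical Gauss--Green formula on the finite perimeter set $E$ to each $\FF_k$, and pass to the limit $k\to\infty$ term by term. The reduced-boundary term is handled by dominated convergence with majorant $h$, and the divergence term by a weak-$\ast$ compactness argument in $\LL^\infty(\lvert\div\FF\rvert)$.

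\emph{Setup and Gauss--Green for the approximations.} Since $E \Subset \Omega$, I fix an open $U$ with $E \Subset U \Subset \Omega$. For $k$ large, $\delta_k < \dist(U,\partial\Omega)$, so $\FF_k = \FF \ast \rho_{\delta_k}$ is smooth on $U$. Moreover $\FF_k \to \FF$ in $\LL^1(U)$ and $\div \FF_k = (\div\FF)\ast\rho_{\delta_k}$ on $U$. Write $\mu = \div \FF$ and fix $\phi \in \mathrm{Lip}_{\rmb}(\Omega)$. The Gauss--Green theorem for Lipschitz fields on sets of finite perimeter, applied to $\phi\FF_k$, gives
\begin{equation*}
  \int_E \phi \,\div \FF_k \,\d x + \int_E \nabla\phi\cdot\FF_k\,\d x = \int_{\rbdry E} \phi\, \FF_k\cdot\nu_E \,\d\mathcal H^{n-1}.
\end{equation*}
The second term on the left converges to $\int_E \nabla\phi\cdot\FF\,\d x$ by $\LL^1$ convergence.

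\emph{Boundary term.} On the right, $\lvert \phi\FF_k\cdot\nu_E\rvert \le \norm{\phi}_\infty h$, and $h \in \LL^1(\mathcal H^{n-1}\mres\rbdry E)$ by \eqref{eq:h_admissible}. In particular $h<\infty$ $\mathcal H^{n-1}$-a.e.\ on $\rbdry E$, so by \eqref{eq:h_property2} we have $\FF_k \to \FF^\ast$ there. Dominated convergence then yields the limit $\int_{\rbdry E}\phi\,\FF^\ast\cdot\nu_E\,\d\mathcal H^{n-1}$.

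\emph{Divergence term (main obstacle).} By Fubini,
\begin{equation*}
  \int_E \phi\,(\mu\ast\rho_{\delta_k})\,\d x = \int \big((\phi\mathbbm 1_E)\ast\rho_{\delta_k}\big)\,\d\mu .
\end{equation*}
Since $\phi$ is Lipschitz, $\lvert(\phi\mathbbm 1_E)\ast\rho_{\delta_k} - \phi\,(\mathbbm 1_E\ast\rho_{\delta_k})\rvert \le \mathrm{Lip}(\phi)\,\delta_k$, so it suffices to study $u_k := \mathbbm 1_E\ast\rho_{\delta_k}$, which satisfies $0\le u_k\le 1$. Because $\rho$ is radial, a layer-cake decomposition writes $u_k(x)$ as an average of the ball ratios $\mathcal L^n(E\cap B_r(x))/\mathcal L^n(B_r(x))$ over $r\in(0,\delta_k)$. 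Hence $u_k(x)\to\operatorname{dens}_E(x)$ wherever this density exists; in particular $u_k\to 1$ on $E^1=E$ and $u_k\to0$ on $E^0$.

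I then pass to a subsequence along which $u_k \weakstarto \hat g$ in $\LL^\infty(\Omega,\lvert\mu\rvert)$; this choice does not depend on $\phi$. Using dominated convergence on $E^1\cup E^0$ and the weak-$\ast$ convergence on $\mbdry E$, I obtain $\hat g = 1$ on $E$, $\hat g = 0$ on $E^0$, and $\hat g = \operatorname{dens}_E$ wherever the density exists, $\lvert\mu\rvert$-a.e. Set $g_E := \hat g$ on $\mbdry E$. Then
\begin{equation*}
  \int E\text{-term} \longrightarrow \int_E\phi\,\d\mu + \int_{\mbdry E}\phi\, g_E\,\d\mu .
\end{equation*}
The delicate point is that $\mbdry E$ may carry mass of $\mu$ where the density fails to exist, which is why only a weak-$\ast$ limit is available. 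However, every other term in the identity converges along the full sequence, so the limit of this term is forced and the subsequence choice is harmless.

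\emph{Conclusion.} Substituting the three limits into the definition of the normal trace gives
\begin{equation*}
  \langle\FF\cdot\nu,\phi\rangle_{\partial E} = \int_E \phi\,\d\mu + \int_E\nabla\phi\cdot\FF\,\d x = -\int_{\mbdry E}\phi\, g_E\,\d\mu + \int_{\rbdry E}\phi\,\FF^\ast\cdot\nu_E\,\d\mathcal H^{n-1}.
\end{equation*}
This is a finite Radon measure supported on $\mbdry E$, since $\rbdry E\subset\mbdry E$. It gives exactly the asserted representation $(\FF \cdot \nu)_{\partial E} = -g_E \div \FF \mres \mbdry E + \FF^{\ast} \cdot \nu_E \,\mathcal{H}^{n-1}\mres \rbdry E$.
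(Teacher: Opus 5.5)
Your proof is correct. The paper gives no proof of its own here; it simply quotes the result from \cite{Schuricht2007a}. Your argument is a complete proof along the natural lines:
\begin{itemize}
\item Gauss--Green for $\phi\FF_k$ on $E$;
\item dominated convergence on $\rbdry E$, using the majorant $h$;
\item a weak-$\ast$ limit of $\mathbbm{1}_E\ast\rho_{\delta_k}$ in $\LL^\infty(\Omega,\lvert\div\FF\rvert)$ for the divergence term.
\end{itemize}
Your observation that the subsequence is harmless, because the other two terms converge along the full sequence, is exactly right. One small technical point: the layer-cake step uses that the standard mollifier is radially non-increasing. For a general radial $\rho$, an integration by parts in $r$ gives the same convergence to $\operatorname{dens}_E$.

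You genuinely use that the $\FF_k$ are the mollifications of Definition~\ref{def:precise_majorant}, so that $\div\FF_k=(\div\FF)\ast\rho_{\delta_k}$ near $\overline E$. The statement's wording suggests that \eqref{eq:h_property1}--\eqref{eq:h_property2} alone suffice, but the mollifier structure cannot be dropped. Take $E=B_1$, $\FF=\mathrm{e}_1$, and the constant sequence $\FF_k\equiv\mathrm{e}_1\mathbbm{1}_{\R^n\setminus\partial B_1}$ (another representative of $\FF$) with $h=\lvert\FF_k\rvert$. Then both pointwise properties hold and $\int_{\rbdry E}h\,\d\mathcal H^{n-1}=0$, so the formula would predict a zero trace. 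The actual normal trace is $x_1\,\mathcal H^{n-1}\mres\partial B_1\neq0$. So your reading of the hypothesis is the correct one.
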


The second result is a reformulated result of {\v{S}ilhav\'y}; a related result for Lipschitz domains can be found in \cite[Theorem~3.1]{ChenFrid2003},
\begin{lemma}[{\cite[Theorem~2.4]{Silhavy2009}}]\label{lem:silhavy_admissibility}
  If $U \Subset \Omega$ is an open set such that
  \begin{equation}\label{eq:silhavy_condition}
    \liminf_{\eps \downarrow 0} \frac1{\eps} \int_{U \cap B_{\eps}(\partial U)} \lvert \FF\cdot \nabla d_{\partial U}\rvert \,\d x < \infty
  \end{equation}
  where $d_{\partial U}(x) = \dist(x,\partial U)$,
  then the normal trace on $U$ is represented by a measure on $\partial U$, and there is a sequence $\eps_k \downarrow 0$ such that 
  \begin{equation}
    (\FF \cdot \nu)_{\partial U} = -\wslim_{\eps_k \to 0} \frac1{\eps_k} \nabla d_{\partial U} \cdot \FF \,\mathcal L^n \mres (U \cap B_{\eps}(\partial U)). 
  \end{equation} 
\end{lemma}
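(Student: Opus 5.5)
The plan is to approximate the characteristic function $\mathbbm{1}_U$ by Lipschitz cutoffs built from the distance function, and then pass to the limit in the divergence identity. For $\eps>0$ define
\begin{equation*}
  \psi_\eps(x) := \begin{cases} \min\{1, d_{\partial U}(x)/\eps\} & x \in U,\\ 0 & x \notin U.\end{cases}
\end{equation*}
Since $d_{\partial U}$ is $1$-Lipschitz and vanishes on $\partial U$, $\psi_\eps$ is Lipschitz with constant $1/\eps$. Moreover $0\le\psi_\eps\le 1$, and $\spt\psi_\eps\subset\overline U$, which is a compact subset of $\Omega$ because $U\Subset\Omega$. Its gradient satisfies $\nabla\psi_\eps = \eps^{-1}\nabla d_{\partial U}$ a.e.\ on $U\cap B_\eps(\partial U)$ and vanishes a.e.\ elsewhere.

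Fix $\phi\in\mathrm{Lip}_{\rmb}(\Omega)$. Then $\psi_\eps\phi\in\mathrm{Lip}_{\rmb}(\Omega)$ with compact support in $\Omega$. We apply \eqref{eq:divergence measure fields}, which the paper notes extends to Lipschitz bounded test functions, to obtain
\begin{equation*}
  \int_U \psi_\eps\phi\,\d(\div\FF) + \int_U \psi_\eps\,\FF\cdot\nabla\phi\,\d x = -\frac1\eps\int_{U\cap B_\eps(\partial U)} \phi\,\FF\cdot\nabla d_{\partial U}\,\d x .
\end{equation*}
As $\eps\downarrow0$ we have $\psi_\eps\to\mathbbm{1}_U$ pointwise everywhere, since $d_{\partial U}>0$ on the open set $U$. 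The integrands are dominated by $\lvert\phi\rvert$ in $\LL^1(\lvert\div\FF\rvert)$ and by $\lvert\FF\rvert\lvert\nabla\phi\rvert$ in $\LL^1$, so dominated convergence shows that the left-hand side tends to $\langle\FF\cdot\nu,\phi\rangle_{\partial U}$.

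For the right-hand side, consider the measures
\begin{equation*}
  \mu_\eps := -\eps^{-1}\,\nabla d_{\partial U}\cdot\FF\,\mathcal L^n\mres(U\cap B_\eps(\partial U)).
\end{equation*}
By \eqref{eq:silhavy_condition} there is a sequence $\eps_k\downarrow0$ along which $\lvert\mu_{\eps_k}\rvert(\Omega)$ is bounded. For small $\eps$ all these measures are supported in the fixed compact set $K=\overline{B_{\eps_0}(\partial U)}\subset\Omega$. Weak-$*$ compactness of $\mathcal M(K)=\CC(K)^*$ then yields a further subsequence (not relabelled) and a measure $\sigma\in\mathcal M(K)$ with $\mu_{\eps_k}\weakstarto\sigma$.

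Two points remain. First, $\sigma$ is concentrated on $\partial U$: the supports of $\mu_{\eps_k}$ lie in the closed sets $\overline{B_{\eps_k}(\partial U)}$, which decrease to $\partial U$, so $\lvert\sigma\rvert$ gives no mass to $K\setminus\partial U$. Second, because all the measures live on the single compact set $K$, the convergence $\int\phi\,\d\mu_{\eps_k}\to\int\phi\,\d\sigma$ holds for every bounded continuous $\phi$, and in particular for every $\phi\in\mathrm{Lip}_{\rmb}(\Omega)$. Comparing the two limits of the displayed identity gives $\langle\FF\cdot\nu,\phi\rangle_{\partial U}=\int_{\partial U}\phi\,\d\sigma$ for all such $\phi$, together with the stated weak-$*$ formula for $(\FF\cdot\nu)_{\partial U}$. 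Since the left-hand side does not depend on the subsequence, the limit $\sigma$ is in fact unique.

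The only delicate step is justifying the divergence identity for the test function $\psi_\eps\phi$, which is merely Lipschitz and not $\CC^1_{\rmc}$. I would handle this by mollifying $\psi_\eps\phi$. The support stays compactly inside $\Omega$, the mollified gradients converge in $\LL^1(\lvert\FF\rvert\,\d x)$ by dominated convergence, and the mollified functions converge boundedly $\lvert\div\FF\rvert$-a.e. This last convergence holds everywhere because $\psi_\eps\phi$ is continuous, so the identity passes to the limit. This is exactly the approximation already alluded to after \eqref{eq:divergence measure fields}.
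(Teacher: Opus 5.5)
Your proof is correct and complete. The paper gives no proof of this lemma and simply quotes \cite[Theorem~2.4]{Silhavy2009}; your argument is the standard route to that result. You test the divergence identity with the compactly supported Lipschitz cutoffs $\min\{1,d_{\partial U}/\eps\}\mathbbm{1}_U$ and let $\eps\downarrow 0$ by dominated convergence on the left. On the right, you extract a weak-$*$ limit along a sequence realizing the $\liminf$. Your observation that the limit does not depend on the subsequence, so the original bounded sequence already converges, is a harmless bonus.
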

Since $\lvert \FF \cdot \nabla d_{\partial U}\rvert \leq \lvert \FF\rvert$ holds $\mathcal L^n$-a.e.\ in $\Omega$, condition \eqref{eq:silhavy_condition} is implied by \eqref{eq:m_admissible_intro} with the choice $m = \lvert \FF\rvert$.

We note that a similar condition was introduced earlier in \cite[Theorem~4.2]{Silhavy2005}, where if $E$ is a normalized set of finite perimeter, the condition
\begin{equation}\label{eq:silhavy_condition2}
  \liminf_{\rho \downarrow 0} \frac1{\omega_n \rho^n} \int_{\rbdry E} \int_{B_{\rho}(x)} \lvert \FF(y) \cdot \nu_E(x)\rvert \,\d y\,\d\mathcal H^{n-1}(x) < \infty
\end{equation} 
implies the normal trace of $\FF$ on $\rbdry E$ is a measure.
For Lipschitz domains, one can show that \eqref{eq:silhavy_condition2} is equivalent to having both $\Omega$ and $\mathbb R^n \setminus \overline\Omega$ satisfy \eqref{eq:silhavy_condition}.

\section{Minkowski-type estimate}\label{sec:minkowski}

In this section we will establish Theorem~\ref{thm:intro_comparison}, and in fact prove a more general statement. 
We will first specify the class of sets we will work with.

\begin{definition}\label{def:lower_density}
  Let $S \subset \mathbb R^n$ be a Borel measurable subset with locally finite $\mathcal H^{n-1}$-measure, in that $\mathcal H^{n-1}(S \cap K)<\infty$ for all $K \subset \mathbb R^n$ compact.
  We say $S$ satisfies the \emph{uniform lower density condition} (with respect to $\mathcal H^{n-1}$) if there exists $\gamma_0>0$ and $R_0>0$ such that
  \begin{align}\label{eq:uniform_lower}
      \mathcal H^{n-1}(S\cap B_r(x_0)) \geq\gamma_0r^{n-1},\quad \mbox{for all } x_0 \in S \mbox{ and } r\in(0,R_0).
    \end{align}
\end{definition}

Such sets are also called \emph{lower $(n-1)$-Ahlfors regular} in the literature.
It is well known that the boundaries of Lipschitz domains satisfy the uniform lower density estimate (see \cite[p.\,111]{AmbrosioEtAl2000}),
and further examples include sets with cusps, such as
\begin{equation*}
  S_\alpha = \left\lbrace (t,\lvert t\rvert^\alpha ) \, :\, t\in (-1,1)\right\rbrace,
  \quad\mbox{for each } \alpha \in (0,1).
\end{equation*}

\begin{remark}
If $S\subset \mathbb R^n$ has finite $\mathcal H^{n-1}$-measure and satisfies the uniform lower density condition, then using a covering argument (see the proof of Theorem~\ref{th: Estimate with two sided Minkowski}) we have
\begin{equation}\label{eq:minkowski_upperbound}
  \mathcal H^{n-1}(S) \geq C(n,\gamma_0) \limsup_{\eps \downarrow 0} \frac{\mathcal L^n(B_{\eps}(S))}{2\eps}.
\end{equation}
The $\limsup$ on the right hand side is the \textit{$(n-1)$-dimensional upper Minkowski content} of $S$. Moreover, it is well-known (see e.g.\ \cite[Theorem~2.104]{AmbrosioEtAl2000}) that if $S \subset \mathbb R^n$ is compact, countably $(n-1)$-rectifiable and satisfies the uniform lower density condition, then we have
\begin{equation*}
  \mathcal H^{n-1}(S) = \mathcal M^{n-1}(S) := \lim_{\eps \downarrow 0}  \frac{\mathcal L^n(B_{\eps}(S))}{2\eps}.
\end{equation*} 
The expression on the right hand side is the \textit{$(n-1)$-dimensional Minkowski content} of $S$. 
One-sided variants and related results can be found in \cite{AmbrosioColesantiVilla2008,Villa2009}. 
\end{remark}

The main result of this section is a generalization of the estimate \eqref{eq:minkowski_upperbound} to $\LL^1$-fields.

\begin{theorem}
    \label{th: Estimate with two sided Minkowski}
    Let $\FF \in \LL^1(\mathbb R^n;\mathbb R^n)$ with precise majorant $h$ and suppose $S \subset \mathbb R^n$ is Borel measurable with locally finite $\mathcal H^{n-1}$-measure, which satisfies the uniform lower density condition. Then there exists $\gamma>0$ such that
    \begin{equation}
        \int_{S} h \,\mathrm{d}\mathcal H^{n-1} \geq \gamma \liminf_{\eps\downarrow0} \frac{1}{2\eps} \int_{B_{\eps}(S)} \lvert \FF \rvert \,\mathrm{d}x.
    \end{equation}
\end{theorem}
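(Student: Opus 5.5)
The plan is to compare $h$ with a mollification of $\lvert \FF\rvert$ at the scales $\delta_k$ used to build $h$, and then to use the lower density condition to show that averaging over $S$ with a kernel of width $\delta_k$ sees every point of the neighbourhood $B_{\delta_k/2}(S)$ with weight of order $\delta_k^{-1}$. In particular, the $\liminf$ will be estimated along the specific sequence $\eps_k := \delta_k/2$ from Definition~\ref{def:precise_majorant}.

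\emph{Step 1 (pointwise lower bound for $h$).} Write $\FF=(F_1,\dots,F_n)$ with $F_i = F_i^+ - F_i^-$. By Definition~\ref{def:precise_majorant}, $h_i^{\pm} \geq F_i^{\pm}\ast\rho_{\delta_k}$ pointwise everywhere, and these mollifications are non-negative. Summing over $i$ and the signs gives, for every $x$ and every $k$,
\begin{equation*}
  h(x) \;\geq\; \sum_{i=1}^n \bigl(F_i^+ + F_i^-\bigr)\ast\rho_{\delta_k}(x) \;=\; \Bigl(\sum_{i=1}^n \lvert F_i\rvert\Bigr)\ast\rho_{\delta_k}(x) \;\geq\; \lvert\FF\rvert\ast\rho_{\delta_k}(x).
\end{equation*}
This is precisely where the specific form of the precise majorant is needed. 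A general majorant satisfying only \eqref{eq:h_property1} controls $\lvert \FF\ast\rho_\delta\rvert$, which can be small because of cancellations, whereas we need control of $\lvert\FF\rvert\ast\rho_\delta$.

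\emph{Step 2 (Tonelli).} Integrate the bound from Step 1 over $S$ with respect to $\mathcal H^{n-1}$. This is justified by Tonelli's theorem, since $S$ is Borel, $\mathcal H^{n-1}\mres S$ is $\sigma$-finite by local finiteness, and all integrands are non-negative and Borel. We obtain
\begin{equation*}
  \int_S h\,\d\mathcal H^{n-1} \;\geq\; \int_{\mathbb R^n} \lvert\FF(y)\rvert \Bigl(\int_S \rho_{\delta_k}(x-y)\,\d\mathcal H^{n-1}(x)\Bigr)\d y .
\end{equation*}

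\emph{Step 3 (the key geometric estimate).} We show that for $y\in B_{\delta_k/2}(S)$ and $\delta_k<4R_0$ the inner integral is at least $c\,\gamma_0\,\delta_k^{-1}$.
\begin{itemize}
  \item Since $\rho$ is a standard radial mollifier, we may assume it is positive and radially non-increasing on $B_1$. Hence $\rho\geq c_\rho>0$ on $B_{3/4}$, and therefore $\rho_{\delta}\geq c_\rho\delta^{-n}$ on $B_{3\delta/4}$.
  \item Pick $x_0\in S$ with $\lvert x_0-y\rvert<\delta_k/2$. Then $B_{\delta_k/4}(x_0)\subset B_{3\delta_k/4}(y)$.
  \item By \eqref{eq:uniform_lower}, $\mathcal H^{n-1}(S\cap B_{\delta_k/4}(x_0))\geq \gamma_0 4^{1-n}\delta_k^{n-1}$.
  \item Multiplying the two bounds gives $\int_S\rho_{\delta_k}(x-y)\,\d\mathcal H^{n-1}(x)\geq c_\rho\gamma_0 4^{1-n}\delta_k^{-1}$.
\end{itemize}
Discarding the contribution of $y\notin B_{\delta_k/2}(S)$ in Step 2, we conclude
\begin{equation*}
  \int_S h\,\d\mathcal H^{n-1} \;\geq\; c_\rho\gamma_0 4^{1-n}\,\frac{1}{\delta_k}\int_{B_{\delta_k/2}(S)}\lvert\FF\rvert\,\d x \;=\; \gamma\,\frac{1}{2\eps_k}\int_{B_{\eps_k}(S)}\lvert\FF\rvert\,\d x .
\end{equation*}
Here $\eps_k=\delta_k/2$ and $\gamma=\gamma(n,\rho,\gamma_0)$.

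\emph{Step 4 (conclusion).} Since $\eps_k\downarrow0$, the $\liminf$ over all $\eps\downarrow 0$ is bounded by the $\liminf$ along this sequence, which is in turn bounded by the uniform estimate of Step 3; this yields the theorem. If $\int_S h\,\d\mathcal H^{n-1}=\infty$ there is nothing to prove.

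I expect the main obstacle to be Step 1, namely recognising that the decomposition into $F_i^{\pm}$ in Definition~\ref{def:precise_majorant} is what allows $h$ to dominate $\lvert\FF\rvert\ast\rho_{\delta_k}$ rather than merely $\lvert\FF\ast\rho_{\delta_k}\rvert$. Once that is in place, Step 3 is the routine covering-type estimate behind \eqref{eq:minkowski_upperbound}. The only mild point to check there is the positivity of $\rho$ on a fixed smaller ball, which can be arranged in the choice of the standard mollifier.
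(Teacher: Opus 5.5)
Your proof is correct, but it follows a different route from the paper's.

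\textbf{The paper's route.} It first proves the ball-wise estimate of Lemma~\ref{lem: general bound with h}:
\[
\int_{S\cap B_{r_k}(x_0)}h\,\d\H^{n-1}\geq \frac{\gamma}{r_k}\int_{B_{r_k}(x_0)}\lvert\FF\rvert\,\d x
\quad\text{for every } x_0\in S,
\]
with $r_k=\kappa\delta_k/4$. This uses the same reduction $h\geq\lvert\FF\rvert\ast\rho_{\delta_k}$ as your Step~1, and the lower density bound only at the centre $x_0$. It then covers $B_{r_k/2}(S)$ by such balls via Besicovitch's theorem. Bounded overlap is used to sum the left-hand sides, and the covering property to control the right-hand sides.

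\textbf{Your route.} You apply Tonelli to $\int_S \lvert\FF\rvert\ast\rho_{\delta_k}\,\d\H^{n-1}$. You then bound the kernel mass $\int_S\rho_{\delta_k}(x-y)\,\d\H^{n-1}(x)$ from below pointwise for $y\in B_{\delta_k/2}(S)$. This is a continuous version of the covering argument, in which the overlap count is replaced by an exact exchange of integrals.

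\textbf{Comparison.} Your argument is shorter, needs no covering theorem, and gives a constant free of the Besicovitch constant $c_n$. The paper's argument isolates a local estimate, together with its density-weighted form \eqref{eq:local_q_bound}, which needs no lower density assumption. The paper reuses this local estimate in the proof of Theorem~\ref{thm:flux_comparison}. Your argument localizes just as easily there. Insert a non-negative continuous, compactly supported test function $\phi$ into the Tonelli step and use its uniform continuity at scale $\delta_k$; the resulting error is at most $\omega_\phi(\delta_k)\int_S h\,\d\H^{n-1}$, where $\omega_\phi$ is a modulus of continuity of $\phi$.

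\textbf{A minor point.} The mollifier $\rho$ is fixed in the definition of $h$, so you cannot ``assume'' its shape. Positivity on $B_{3/4}$ does hold for the usual exponential bump. If you prefer not to rely on that, take, as the paper does, $\kappa,\theta>0$ with $\rho\geq\theta$ on $B_\kappa$. Then work with $\eps_k=\kappa\delta_k/2$ and the inner ball $B_{\kappa\delta_k/2}(x_0)\subset B_{\kappa\delta_k}(y)$; nothing else changes.
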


We will later show the reverse estimate fails, cf.\ Corollary~\ref{thm:converse_fails}.
Before embarking on the proof, we present a simple example exhibiting the necessity of the lower density condition at every point on $S$, rather than e.g.\ on the reduced boundary when $S = \partial E$.

\begin{example}\label{eg:lowerdensity_2}
  Let $U = B_1(0) \setminus \{0\} \subset \mathbb R^2$ be the punctured unit disc, and define $\FF(x) = \chi(x)\frac{x}{\lvert x\rvert^2}$, where $\chi \in \CC^{\infty}_{\rmc}(B_1(0))$ is a cutoff such that $\chi \equiv 1$ in $B_{1/2}(0)$. 
  Then, $\FF \in \mathcal{DM}^1(\mathbb R^2)$ with $\div \FF = 2 \pi \delta_0 + \frac{x}{\lvert x\rvert^2} \cdot \nabla \chi$, and since $\FF$ is compactly supported in $B_1$, we can choose its precise majorant so that $h \equiv 0$ on $\partial B_1(0)$.
  Therefore we have
  \begin{equation}
    \int_{\partial U} h \,\d\mathcal H^{n-1} = 0 \quad\mbox{while}\quad \liminf_{\eps \downarrow 0} \frac1{\eps} \int_{B_\eps(\partial U)} \lvert \FF \rvert\,\d x 
    = 2\pi.
  \end{equation}
  In this case $\rbdry U = \partial B_1(0)$ where the lower density condition is satisfied, however the conclusion of Theorem \ref{th: Estimate with two sided Minkowski} fails with $S=\partial U$ since the lower density condition is not satisfied at $0 \in \partial U$.
\end{example}

\begin{lemma}\label{lem: general bound with h}
  Let $\FF \in \LL^1(\mathbb R^n;\mathbb R^n)$ with precise majorant $h$, and let $S \subset \mathbb R^n$ be a Borel measurable set with locally finite $\mathcal H^{n-1}$-measure satisfying the uniform lower density condition.
    Then there exists $\gamma>0$, a sequence $r_k \to 0$ and some index $N\in \N$ such that
    \begin{equation}
      \int_{S \cap B_{r_k}(x_0)} h \,\mathrm{d}\mathcal H^{n-1} \geq \frac{\gamma}{r_k} \int_{B_{r_k}(x_0)} \lvert \FF\rvert \,\mathrm{d}x  \quad \mbox{for all } x_0 \in S \mbox{ and } k \geq N.
    \end{equation}
\end{lemma}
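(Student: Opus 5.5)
The idea is to compare $h$ pointwise on $S$ with a mollification of $\lvert\FF\rvert$ at the scale $\delta_k$, and to choose $r_k$ proportional to $\delta_k$. Integrating that pointwise bound over $S\cap B_{r_k}(x_0)$ then reduces the claim to the uniform lower density condition. Concretely, I take $r_k := a\,\delta_k/2$, where $(\delta_k)$ is the common sequence in Definition~\ref{def:precise_majorant} and $a>0$ is a constant depending only on the fixed mollifier $\rho$. I take $N$ so large that $r_k<R_0$ for $k\ge N$.

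\emph{Step 1: pointwise lower bound for $h$.} By construction each $h_i^{\pm}$ dominates every mollification $(F_i^{\pm})\ast\rho_{\delta_k}$ at every point, and these mollifications are non-negative. Summing over $i$ and $\pm$ gives, for every $x\in\mathbb R^n$ and every $k$,
\begin{equation*}
h(x)\ \ge\ \sum_{i=1}^n (F_i^++F_i^-)\ast\rho_{\delta_k}(x) = \Big(\sum_{i=1}^n\lvert F_i\rvert\Big)\ast\rho_{\delta_k}(x)\ \ge\ \lvert\FF\rvert\ast\rho_{\delta_k}(x).
\end{equation*}
This holds at every point, including points of $S$, with the convention $h=\infty$ allowed. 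This pointwise validity is exactly why the specific construction of $h$ is needed rather than an a.e.\ majorant. Since $\rho$ is a standard radial mollifier, it is continuous with $\rho(0)>0$. Hence there are $a\in(0,1]$ and $c>0$ with $\rho\ge c$ on $B_a$, and so $\rho_\delta(w)\ge c\,\delta^{-n}$ whenever $\lvert w\rvert<a\delta$.

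\emph{Step 2: localisation.} Fix $x_0\in S$ and $k\ge N$. For $y,z\in B_{r_k}(x_0)$ we have $\lvert y-z\rvert<2r_k=a\delta_k$. Therefore, for every $y\in S\cap B_{r_k}(x_0)$,
\begin{equation*}
h(y)\ \ge\ \int_{B_{r_k}(x_0)}\lvert\FF(z)\rvert\,\rho_{\delta_k}(y-z)\,\d z\ \ge\ c\,\delta_k^{-n}\int_{B_{r_k}(x_0)}\lvert\FF\rvert\,\d z .
\end{equation*}
The right-hand side is independent of $y$. Integrating in $y$ over $S\cap B_{r_k}(x_0)$ with respect to $\mathcal H^{n-1}$, and using \eqref{eq:uniform_lower} (valid since $r_k<R_0$), gives
\begin{equation*}
\int_{S\cap B_{r_k}(x_0)}h\,\d\mathcal H^{n-1}\ \ge\ \gamma_0 r_k^{n-1}\,c\,\delta_k^{-n}\int_{B_{r_k}(x_0)}\lvert\FF\rvert\,\d x .
\end{equation*}
Since $\delta_k=2r_k/a$, the prefactor equals $\gamma/r_k$ with $\gamma:=\gamma_0\,c\,(a/2)^n$. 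This constant is independent of $x_0$ and $k$, which is the claim.

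The only genuinely delicate point is Step 1. One must check that the majorant bound holds everywhere on $S$, which may be $\mathcal L^n$-null, and not merely almost everywhere. One must also check that it dominates a mollification of $\lvert\FF\rvert$ itself and not only of the components $F_i$. Both follow from the componentwise positive/negative splitting in \eqref{eq:precise_majorant_2} together with the bound $\lvert F_k\rvert\le h$, which holds everywhere by construction. The remaining steps are routine. They use only the positivity of $\rho$ near the origin and the choice $r_k\sim\delta_k$, which makes the $\delta_k^{-n}$ factor cancel against $r_k^{n-1}$ up to a single power of $r_k$.
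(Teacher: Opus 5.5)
Your proof is correct and follows essentially the same route as the paper. In both, $h$ is bounded pointwise below by $\lvert\FF\rvert\ast\rho_{\delta_k}$ (the paper first reduces to a non-negative scalar component, whereas you sum the componentwise bounds directly), then $\rho$ is bounded below near the origin with $r_k$ comparable to $\delta_k$ (the paper takes $r_k=\kappa\delta_k/4$, you take $a\delta_k/2$ and bound the kernel directly for pairs $y,z\in B_{r_k}(x_0)$), and the argument concludes with the uniform lower density estimate.
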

\begin{proof}
  By splitting $\lvert \FF \rvert \leq \sum_{i=1}^n (\lvert \FF_i^+\rvert +\lvert \FF_i^-\rvert)$ and recalling the form of $h$ from \eqref{eq:precise_majorant_2}, we can reduce to the case when $\FF = F$ is a non-negative scalar function.
  In this case, there is a sequence $\delta_k \to 0$ such that $F_k = F\ast \rho_{\delta_k} \leq h$ pointwise. 
  Let $(r_k)_{k \in \mathbb N}$ be a sequence of positive radii to be determined.

  Let $x_0 \in S$, then up to a rigid transform we can assume that $x_0=0$.
  Then choosing $\kappa,\theta>0$ such that $\rho(y) > \theta$ for all $y \in B_{\kappa}$, we can estimate
    \begin{equation}\label{eq:hbound_lem_1}
        \begin{split}
            \int_{S \cap B_{r_k}} h \,\mathrm{d}\H^{n-1}
            &\geq \int_{S \cap B_{r_k}} F \ast \rho_{\delta_k} \,\mathrm{d}\H^{n-1} \\
            &\geq  \int_{S \cap B_{r_k}} \int_{B_{\delta_k}} \frac{F(x-y)}{\delta_k^n}\rho\left(\frac{y}{\delta_k}\right)\,dy \,\mathrm{d}\H^{n-1}(x) \\
            &\geq \frac{\theta}{\delta_k^n} \int_{S \cap B_{r_k}} \int_{B_{\kappa \delta_k}} F(x-y)\,\mathrm{d}y \,\mathrm{d}\H^{n-1}(x)  \\           
            &\geq \frac{\theta}{\delta_k^n} \int_{S \cap B_{r_k}} \int_{B_{\kappa \delta_k}(x)} F(y)\,\mathrm{d}y \,\mathrm{d}\H^{n-1}(x).
        \end{split}
    \end{equation}
    We choose $r_k = \frac{1}{4}\kappa \delta_k$, which ensures that $B_{r_k}\subseteq B_{\kappa \delta_k}(x)$ for all $x\in S \cap B_{r_k}$, and let $N\in\N$ such that $r_k < R_0$ holds for all $k \geq N$. With this choice, using \eqref{eq:uniform_lower}
 we can estimate
     \begin{align*}
        \int_{S \cap B_{r_k}} \int_{B_{\kappa \delta_k}(x)} F(y)\,\mathrm{d}y \,\mathrm{d}\H^{n-1}(x) 
        &\geq \int_{S \cap B_{r_k}} \int_{B_{r_k}} F(y)\,\mathrm{d}y \,\mathrm{d}\H^{n-1}(x) \\
        &= \mathcal{H}^{n-1}(S\cap B_{r_k}) \int_{B_{r_k}} F(y)\,\mathrm{d}y \\
        &\geq \gamma_0 r_k^{n-1} \int_{B_{r_k}} F(y)\,\mathrm{d}y 
     \end{align*}
     for all $k \geq N$.
     Combining this estimate with \eqref{eq:hbound_lem_1}, we deduce that
     \begin{align*}
         \int_{S \cap B_{r_k}} h \,\mathrm{d}\H^{n-1} & \geq \frac{\gamma_0\theta}{\delta_k}   \left(\frac{r_k}{\delta_k}\right)^{n-1} \int_{B_{r_k}} F(y)\,\mathrm{d}y = \frac{\gamma}{r_k} \int_{B_{r_k}} F(y)\,\mathrm{d}y,
     \end{align*}
     where $\gamma = \frac{\gamma_0\theta\kappa^{n}}{4^{n}}$.
\end{proof}

\begin{remark}
    For a general Borel-measurable subset $S \subset \mathbb R^n$ which is locally $\mathcal H^{n-1}$-finite, the proof of Lemma \ref{lem: general bound with h} shows that
    \begin{equation}\label{eq:local_q_bound}
        \int_{S \cap B_{r_k}} h \,\d\mathcal H^{n-1} \geq \frac{\theta \kappa^n}{4^n} \left( \frac{\mathcal H^{n-1}(S \cap B_{r_k}(x))}{r_k^{n-1}}\right) \frac1{r_k} \int_{B_{r_k}(x)} \lvert \FF \rvert \,\d x
    \end{equation}
    for all $x \in S$ and $k$ sufficiently large, where $r_k = \frac14 \kappa \delta_k$.
\end{remark}

\begin{proof}[Proof of {Theorem~\ref{th: Estimate with two sided Minkowski}}]
    By Lemma \ref{lem: general bound with h}, there exists $\gamma_1>0$ and $r_k \to 0$ such that
    \begin{equation}
        \int_{S \cap B_{r_k}(x_0)} h \,\d\mathcal{H}^{n-1} \geq \frac{\gamma_1}{r_k} \int_{B_{r_k}(x_0)} \lvert \FF \rvert \,\d x
    \end{equation}
    for all $x_0 \in S$ and $k$ sufficiently large.
    Now for each such $k$, we have $\{B_{r_k}(x) : x \in S\}$ is a covering of $B_{r_k/2}(S)$, so by Besicovitch's covering theorem there exists a constant $c_n$ and points $\{x_i^k\}_i \subset S$ such that
    \begin{equation}
        \mathbbm{1}_{B_{r_k/2}(S)} \leq \sum_{i} \mathbbm{1}_{B_{r_k}(x_i^k)} \leq c_n.
    \end{equation}
    Using this we can estimate
    \begin{equation}\label{eq:int_minkowski}
      \begin{split}
        \int_S h \,\d\mathcal H^{n-1} 
        &\geq \frac1{c_n} \sum_i\int_{S \cap B_{r_k}(x_i)} h \,\d\mathcal H^{n-1} \\
        &\geq \frac{\gamma_1}{c_n r_k} \sum_i \int_{B_{r_k}(x_i^k)} \lvert \FF \rvert \,\d x \\
        &\geq \frac{\gamma_1}{c_n r_k} \int_{B_{r_k/2}(S)} \lvert \FF \rvert \,\d x,
        \end{split}
    \end{equation}
    and so sending $k \to \infty$ gives
    \begin{equation}
        \int_S h \,\d \mathcal H^{n-1} \geq \frac{\gamma_1}{c_n} \liminf_{\eps \downarrow 0} \frac1{2\eps} \int_{B_{\eps}(S)} \lvert \FF \rvert \,\d x
    \end{equation}
    as required.
\end{proof}

\begin{corollary}
  Let $E \subset \mathbb R^n$ be a set of finite perimeter such that $\partial E$ satisfies the uniform lower density condition (Definition \ref{def:lower_density}).
  Then if $\FF \in \LL^1(\mathbb R^n;\mathbb R^n)$ is a vector field with precise majorant $h$ such that
  \begin{equation}
    \int_{\partial E} h \,\d \mathcal H^{n-1} < \infty,
  \end{equation} 
  then we have
  \begin{equation}
    \liminf_{\eps \downarrow 0} \frac1{2\eps} \int_{B_{\eps}(\partial E)} \lvert \FF \rvert \,\d x < \infty.
  \end{equation} 
\end{corollary}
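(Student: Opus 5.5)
This corollary follows by applying Theorem~\ref{th: Estimate with two sided Minkowski} with the choice $S = \partial E$. Before doing so, we check that $S=\partial E$ meets the hypotheses of that theorem.

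First, $\partial E$ is closed and hence Borel measurable. Second, it has locally finite $\mathcal H^{n-1}$-measure. This is part of Definition~\ref{def:lower_density}, which the statement assumes is satisfied by $\partial E$. Alternatively, if one reads the assumption as only the density estimate \eqref{eq:uniform_lower}, one can argue as follows. If $\mathcal H^{n-1}(\partial E\cap K)=\infty$ for some compact $K$, then $\int_{\partial E} h\,\d\mathcal H^{n-1}<\infty$ need not contradict this. Instead, we use that Definition~\ref{def:lower_density} builds local finiteness into the notion, so we take it as given. Third, the uniform lower density condition with constants $\gamma_0,R_0$ holds by assumption. The finite perimeter hypothesis on $E$ is not needed for the estimate itself. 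It only serves to place the statement in the context of Lemma~\ref{lem:schuricht_trace}, where $\rbdry E\subset\partial E$ and the comparison with $h$-admissibility is meaningful.

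Theorem~\ref{th: Estimate with two sided Minkowski} then provides $\gamma>0$, depending only on $n$, $\gamma_0$ and the mollifier $\rho$, such that
\begin{equation*}
  \liminf_{\eps\downarrow0}\frac1{2\eps}\int_{B_\eps(\partial E)}\lvert\FF\rvert\,\d x \leq \frac1\gamma\int_{\partial E}h\,\d\mathcal H^{n-1}.
\end{equation*}
By hypothesis the right-hand side is finite, which gives the claim. This is also exactly Theorem~\ref{thm:intro_comparison} with constant $\gamma^{-1}$.

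The only delicate point is the verification of the hypotheses on $S=\partial E$, namely measurability and local $\mathcal H^{n-1}$-finiteness, and both are immediate. All the analytic work, including the Besicovitch covering and the choice of radii $r_k=\frac14\kappa\delta_k$, is already contained in Lemma~\ref{lem: general bound with h} and in the proof of Theorem~\ref{th: Estimate with two sided Minkowski}.
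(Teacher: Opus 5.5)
Your proposal is correct and matches the paper, which states this corollary as an immediate consequence of Theorem~\ref{th: Estimate with two sided Minkowski} with $S=\partial E$; this gives $\liminf_{\eps\downarrow0}\frac{1}{2\eps}\int_{B_\eps(\partial E)}\lvert\FF\rvert\,\d x\le\gamma^{-1}\int_{\partial E}h\,\d\mathcal H^{n-1}<\infty$. Your digression about the hypothesis without local $\mathcal H^{n-1}$-finiteness is unnecessary, since Definition~\ref{def:lower_density} already includes it, and you are right that the finite perimeter assumption plays no role in the estimate.
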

In particular, if $\FF \in \mathcal{DM}^1(\Omega;\mathbb R^n)$, the admissibility condition \eqref{eq:silhavy_condition} in \v{S}ilhav\'y's result is implied by the $h$-admissibility condition \eqref{eq:h_admissible}.
However, as Example \ref{eg:lowerdensity_2} illustrates, we do not have an analogous result for $\partial^{\ast}E$. 

\begin{remark}
  If $E \subset \mathbb R^n$ is a normalized set of finite perimeter, arguing as in \eqref{eq:hbound_lem_1} with $S=\rbdry E$ in place of $S \cap B_{r_k}$ and using in particular that $\rho \geq \theta \mathbbm{1}_{B_{\kappa}}$, we can show that
  \begin{equation}
    \int_{\rbdry E} h \,\d \mathcal H^{n-1} \geq \frac{\theta}{\delta_k^n} \int_{\rbdry E} \int_{B_{\kappa\delta_k}(x)} \lvert \FF (y)\rvert \,\d y \,\d \mathcal H^{n-1}(x) \quad\mbox{for all $k \in \mathbb N$},
  \end{equation} 
  which implies the admissibility condition \eqref{eq:silhavy_condition2} from \cite[Theorem~4.2]{Silhavy2005}.
\end{remark}

We conclude this section with an example exhibiting the necessity of the uniform lower density condition already when $\FF \equiv 1$.

\begin{example}\label{eg:counterexample_kraft}
  In \cite[Example~1]{Kraft2016}, an open set $\Omega_0 \subset \mathbb R^2$ of finite perimeter with infinite $1$-Minkowski content is constructed, illustrating the necessity of the uniform lower density condition.
  While this example satisfies $\partial \Omega_0 = \rbdry \Omega_0 \cup (\{2\} \times (0,1))$, we will adapt this to obtain an example where the set is both normalized and open.
  Letting $D:= (0,2) \times (0,1)$, from \cite[Example~1]{Kraft2016} we choose $\Omega_0$ to be
  \begin{equation}
    \Omega_0 = \bigcup_{k \in \mathbb N} \bigcup_{j=1}^{N_k} B_{r_k}(x_{k,j}),
  \end{equation} 
  where we can set $r_k = \frac{16^{-k}}{4}, N_k=8^k$ and choose $x_{k,j}\in D$ as in \cite[Example~1 and Figure~5]{Kraft2016}.
  Then, we define
  \begin{equation}
    U = D \setminus \bigcup_{k \in \mathbb N} \bigcup_{j=1}^{N_k} \overline{B_{r_k}(x_{k,j})},
  \end{equation} 
  which is also a set of finite perimeter satisfying
  $  \partial U = \partial D \cup \rbdry \Omega_0 = \rbdry U \cup (\partial D \setminus \partial^{\ast}D)$, and since $\mathcal H^{1}(\partial D \setminus \partial^{\ast}D)=0$ it is normalized.
  We wish to show that
  \begin{equation}\label{eq:inner_minkowski_blowup}
    \liminf_{\eps \downarrow 0} \frac1{\eps} \mathcal L^2(U \cap B_{\eps}(\partial U)) = + \infty.
  \end{equation} 
  To see this, let $\eps_k = \frac12 \cdot 4^{-k} - \frac14 \cdot 16^{-k}$, then for each $\eps>0$ there exists $m\in \N$ for which $\eps_{m+1} \leq \eps < \eps_m$. 
  Then for each $k \leq m$, we have $B_{r_k+\eps}(x_{k,j}) \setminus \overline{B_{r_k}(x_{k,j})} \subset U$ and these annuli are pairwise disjoint, so we infer the lower bound
  \begin{equation}
    \begin{split}
    \mathcal L^2(U \cap B_{\eps}(\partial U)) 
    &\geq \sum_{k=0}^m \sum_{j=1}^{N_k} \mathcal L^2(B_{r_k+\eps}(x_{k,j}) \setminus \overline{B_{r_k}(x_{k,j})} ) \\
    &\geq \sum_{k=0}^m N_k \cdot \pi ((r_k+\eps)^2- r_k^2) \geq 2\pi \eps^2 \sum_{k=0}^m N_k,
  \end{split}
  \end{equation} 
  so the calculations in \cite[(26)]{Kraft2016} imply the estimate
  \begin{equation}
    \frac1\eps \mathcal L^2(U \cap B_{\eps}(\partial U)) \geq \frac{\pi}{14} \left( \frac1{2\sqrt{\eps}}-1 \right),
  \end{equation} 
  which blows up as $\eps \downarrow 0$, establishing \eqref{eq:inner_minkowski_blowup}.
\end{example}

\section{Structure of admissible traces}\label{sec:prescribed}

We will now prove Theorem~\ref{thm:intro_prescribed_trace}, and use it to show that the converse to Theorem~\ref{thm:intro_comparison} fails.
More precisely, we establish the following:

\begin{theorem}\label{thm:prescribed_trace}
    Let $\Omega \subset \mathbb R^n$ be a bounded Lipschitz domain and $\sigma \in \mathcal M(\partial \Omega)$.
    Then there exists a vector field $\FF$ on $\mathbb R^n$ such that $\FF \in \LL^p(\mathbb R^n; \mathbb R^n)$ with $\div \FF \in \LL^p(\R^n)$ for all $1 \leq p < \frac{n}{n-1}$, 
    \begin{equation}\label{eq:prescribed_minkowski}
        \limsup_{\eps \downarrow 0} \frac1{2\eps} \int_{B_\eps(\partial \Omega)} \lvert \FF \rvert \,\d x \leq C \lvert\sigma\rvert(\partial \Omega)<\infty,
    \end{equation}
    and
    \begin{equation}\label{eq:prescribed_trace}
        \langle \FF \cdot \nu, \cdot \rangle_{\partial\Omega} = \sigma.
    \end{equation}
    Moreover, if $\int_{\partial\Omega} \d\sigma=0$, then $\FF$ can be chosen to be divergence-free in a neighborhood of $\overline\Omega$.
\end{theorem}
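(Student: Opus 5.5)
The plan is a potential construction in the half-space, transplanted to $\partial\Omega$ through bi-Lipschitz charts and glued with a partition of unity.

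\textbf{Model case.} In $\mathbb R^n_+=\{x_n>0\}$ with a finite measure $\mu$ on $\mathbb R^{n-1}\times\{0\}$, set
\begin{equation*}
  \GG(x) = -\frac{2}{n\omega_n}\int \frac{x-y}{\lvert x-y\rvert^n}\,\d\mu(y)\,\mathbbm{1}_{\{x_n>0\}},
\end{equation*}
i.e.\ twice the Newtonian field of $\mu$, restricted to the upper side. The field $\frac{x-y}{n\omega_n\lvert x-y\rvert^n}$ is divergence-free away from $y$. Its flux through the flat boundary from above is $-\tfrac12$ of the mass, since half the unit flux from $y$ crosses each side. Hence $\div\GG=0$ in $\mathbb R^n_+$ and the interior normal trace on $\{x_n=0\}$ equals $\mu$, with sign fixed by the normalization. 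By Young's inequality, $\lvert x\rvert^{1-n}\in\LL^p_{\loc}$ for $p<\frac{n}{n-1}$ gives $\GG\in\LL^p_{\loc}$.

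The Minkowski bound follows by Fubini:
\begin{equation*}
  \frac1{\eps}\int_{\{0<x_n<\eps\}\cap B_R}\lvert \GG\rvert\,\d x \le \frac{C}{\eps}\int \int_{0}^{\eps}\int_{\mathbb R^{n-1}}\frac{\d x'\,\d x_n}{(\lvert x'-y'\rvert^2+x_n^2)^{(n-1)/2}}\cdots,
\end{equation*}
which diverges logarithmically and must be corrected. I would instead take the half-ball potential: the inner integral over $x'\in B_R$ is $\lesssim 1+\log(R/x_n)$, whose average over $x_n\in(0,\eps)$ is $\lesssim\log(R/\eps)$. So the plain Newtonian kernel fails the bound at this scale.

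\textbf{Replacement kernel.} I would use a flux concentrated near the normal direction:
\begin{equation*}
  \GG(x)=\int \frac{e_n}{x_n^{n-1}}\,\psi\Big(\frac{x'-y'}{x_n}\Big)\,\d\mu(y),\qquad \psi\in \CC^\infty_{\rmc}(B_1^{n-1}),\ \int\psi=1,
\end{equation*}
cut off for $x_n>1$. For each slice $x_n=t$, $\int\lvert\GG(x',t)\rvert\,\d x'\le\lvert\mu\rvert$, so the Minkowski quantity is at most $\lvert\mu\rvert$. The slice measures $\GG(\cdot,t)\,\d x'\to\mu$ weakly as $t\downarrow0$, which gives trace $\mu$ (after a sign convention via the outward normal). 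Moreover $\lVert \GG\rVert_{\LL^p}^p\lesssim\int_0^1 t^{-(n-1)(p-1)}\,\d t<\infty$ exactly when $p<\frac{n}{n-1}$. The divergence is $\partial_n\GG$, and it is not zero. Writing $\partial_n[t^{1-n}\psi(z/t)]$ as $-\div_{x'}$ of the horizontal field $t^{1-n}\,(z/t)\psi(z/t)$ corrects it: add the horizontal component
\begin{equation*}
  \int t^{1-n}\frac{x'-y'}{t}\,\psi\Big(\frac{x'-y'}{t}\Big)\,\d\mu(y),
\end{equation*}
which obeys the same slice bounds. This makes $\GG$ divergence-free in $\{0<x_n<1\}$, and the cutoff at $x_n\approx1$ produces a smooth $\LL^1\cap\LL^p$ divergence.

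\textbf{Gluing.} Cover $\partial\Omega$ by finitely many Lipschitz graph cylinders with a partition of unity $\{\chi_j\}$. In each cylinder, apply the construction to the pushforward of $\chi_j\sigma$ to the flat boundary, using the vertical translation map $(x',x_n)\mapsto(x',x_n-g_j(x'))$. Vertical cones stay transversal to Lipschitz graphs, so the slice estimates survive, with the $\eps$-neighbourhood comparable to $\{0<x_n-g_j<C\eps\}$. Pull each field back by the Piola transform $\FF=(\det D\Phi)^{-1}D\Phi\,\GG\circ\Phi^{-1}$. Because $\Phi$ here has determinant $1$, this preserves $\LL^p$ bounds, divergence and normal traces. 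Place the fields in $\Omega$ (or outside) and sum. Each piece yields trace $\chi_j\sigma$ and bounded Minkowski quantity, with $\div\FF\in\LL^p$ supported away from $\partial\Omega$.

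\textbf{Divergence-free refinement.} If $\int\d\sigma=0$, the total divergence inside $\overline\Omega$ has zero mass and is smooth, compactly supported at distance $\ge\delta$ from $\partial\Omega$. I would subtract $\nabla w$, where $\Delta w=\div\FF\,\mathbbm 1_{\Omega}$ is solved with Neumann data zero in a smaller smooth domain compactly contained in $\Omega$ (solvable by zero mean) and then extended by $0$. A cleaner option is a Bogovskii-type correction inside $\Omega$. The exterior pieces are handled by placing every cutoff region inside $\Omega$.

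\textbf{Main obstacle.} I expect the main difficulty to be the chart transfer. The two-sided neighbourhood $B_\eps(\partial\Omega)$ has to be controlled by vertical slabs where the slice bounds are uniform, and the weak convergence of slices has to be shown to identify the distributional normal trace, via Lemma~\ref{lem:silhavy_admissibility}.
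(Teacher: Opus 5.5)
\textbf{Gap: the field is one-sided.} Your kernel is the right object. The vertical part $x_n^{1-n}\psi(x'/x_n)$ plus your horizontal correction add up to $\frac{x}{x_n^{n}}\psi(x'/x_n)$, which is, up to sign, the paper's $\Phi$ on $\mathbb R^n_+$. Your slice, $\LL^p$ and trace arguments are those of Lemma~\ref{lem:G_potential}. The problem is that you use this field on one side only (``restricted to the upper side'', ``placing every cutoff region inside $\Omega$'').

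Suppose $\FF=0$ on $\mathbb R^n\setminus\overline\Omega$ and $\langle\FF\cdot\nu,\cdot\rangle_{\partial\Omega}=\sigma$. Then the identity $\langle\FF\cdot\nu,\cdot\rangle_{\partial\Omega}=\mathbbm{1}_\Omega\div\FF-\div(\mathbbm{1}_\Omega\FF)$ gives $\div\FF=\mathbbm{1}_\Omega\div\FF-\sigma$ as distributions on $\mathbb R^n$. So $\div\FF$ charges $\partial\Omega$ with $-\sigma$. It is therefore not in $\LL^p(\mathbb R^n)$, and no correction supported inside $\Omega$ can make $\FF$ divergence-free on a neighbourhood of $\overline\Omega$ unless $\sigma=0$. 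Your claim that $\div\FF$ is an $\LL^p$ function supported away from $\partial\Omega$ holds only for the divergence in the open set $\Omega$.

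This is not cosmetic; it is exactly what Corollary~\ref{thm:converse_fails} relies on. If $\div\FF$ charged $\partial\Omega$, Lemma~\ref{lem:schuricht_trace} would itself allow a singular trace through the term $-g_E\div\FF\mres\mbdry E$, and the contradiction would disappear.

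\textbf{Fix, and what your localization buys.} The missing step is to continue the kernel oddly across the hyperplane: $K(x)=\operatorname{sgn}(x_n)\frac{x}{\lvert x_n\rvert^{n}}\psi\bigl(\frac{x'}{\lvert x_n\rvert}\bigr)$, as the paper's $\Phi$ does. Its vertical component $\lvert x_n\rvert^{1-n}\psi(x'/\lvert x_n\rvert)$ is even in $x_n$. Hence the normal traces from the two half-spaces are $\pm\mu$ and cancel, so $\GG$ is divergence-free across $\{x_n=0\}$. The two-sided Minkowski bound then follows from the same slice estimates. The cost is that the exterior side now also carries cutoff divergence.

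Here your choice to localize the measure ($\chi_j\sigma$) rather than the field (the paper's $\FF=\sum_i\chi_i\FF_i$) is a genuine advantage. Put a cutoff in $\lvert y_n\rvert$ at a small height $\delta$ in each flattened chart. Then all remaining divergence lies at positive distance from $\partial\Omega$ on both sides. After your interior correction on a connected $\Omega'\Subset\Omega$, the field is divergence-free on a neighbourhood of $\overline\Omega$, with no exterior correction at all.

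In the paper, by contrast, $\div\FF=\sum_i\nabla\chi_i\cdot\FF_i$ need not vanish near $\partial\Omega$. That is why Step~3 needs Bogovskii corrections on both sides, the boundary estimate \eqref{eq:v1_admissible}, and the auxiliary fields $\boldsymbol{\zeta}_j$ to meet the compatibility condition on each exterior component.
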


\begin{remark}\label{rem:improvement_integrability}
  In light of \cite[Theorem 4.1]{ChenComiTorres2019} (see also \cite[Theorem 4.2]{Silhavy2005}), the range of $p$ is optimal. 
  We can, however, obtain an improvement in integrability if the measure $\sigma$ satisfies an additional fractional regularity constraint. 
  More precisely, in the setting of Theorem~\ref{thm:prescribed_trace}, if for $p \geq \frac{n}{n-1}$ we have in addition that
  \begin{equation}\label{eq:sigma_fractional}
    \sigma \in \WW^{-\frac1p,p}(\partial\Omega),
  \end{equation} 
  then the constructed field $\FF$ lies in $\LL^p(\mathbb R^n,\mathbb R^n)$ with $\div \FF \in \LL^p(\mathbb R^n)$.
  As it is of tangential interest, we will defer the proof of this assertion to Appendix \ref{sec:appendix_improvement}.

  We note that a sufficient condition for this fractional regularity \eqref{eq:sigma_fractional} is given in \cite[Theorem~4.7.4]{Ziemer1989}, namely if for some $m > n -1- \frac1{p-1}$ we have
  \begin{equation}\label{eq:m-regular}
    \lvert \sigma \rvert(B_r(x)) \leq c_m r^m \quad\mbox{for all $x \in \partial\Omega$, $r >0$},
  \end{equation} 
  then $\sigma \in \WW^{-\frac1p,p}(\partial\Omega)$ and hence the constructed field $\FF$ lies in $\LL^p(\Omega)$.
  Similar conditions were considered in \cite[Proposition~6.1]{Silhavy2005} for the prescribed divergence problem.
\end{remark}

The proof of Theorem~\ref{thm:prescribed_trace} will involve an explicit potential construction in the half-space, which is an adaptation of Gagliardo's extension operator for Sobolev traces. We isolate this construction in the following lemma.

\begin{lemma}\label{lem:G_potential}
  Given a standard mollifier $\rho$ on $\mathbb R^{n-1}$, we define
  \begin{equation}\label{eq:potential}
    \Phi_j(y) =  -\operatorname{sgn}(y_n)\frac{y_j}{\lvert y_n\rvert^n} \rho\bigg(\frac{y'}{\lvert y_n\rvert}\bigg) \quad\mbox{for } j= 1,\cdots,n
  \end{equation} 
  for all $y = (y',y_n) \in \mathbb R^n$ with $y_n\neq0$.
  Using this potential we define for $\sigma \in \mathcal M(\mathbb R^{n-1})$ the field
  \begin{equation}\label{eq:G_potential}
    \GG(y',y_n) := (\Phi(\cdot,y_n) \ast \sigma)(y') = \int_{\mathbb R^{n-1}} \Phi(y'-z',y_n)\,\d\sigma(z'),
  \end{equation} 
  on $\mathbb R^n$. Then $\GG$ satisfies the following properties:
  \begin{enumerate}[label=(\roman*)]
    \item\label{item:G_integrability}For each $1 \leq p < \frac{n}{n-1}$ and $T>0$, we have $\GG \in \LL^p(\mathbb R^{n-1} \times (-T,T))$ along with the estimate
      \begin{equation}
        \lVert \GG \rVert_{\LL^p(\mathbb R^{n-1} \times (-T,T))} \leq c(n,p) T^{\frac{n}p+1-n} \lvert \sigma\rvert(\mathbb R^{n-1}).
      \end{equation} 

    \item\label{item:G_divfree} $\GG$ is divergence-free in $\mathbb R^n$,

    \item\label{item:G_trace} $\langle \GG \cdot \nu, \cdot \rangle_{\partial\mathbb R^n_+} = \sigma = - \langle \GG \cdot \nu, \cdot \rangle_{\partial\mathbb R^n_-}$.
  \end{enumerate}
\end{lemma}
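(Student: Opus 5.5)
We first record the basic properties of the kernel $\Phi$, and then transfer them to $\GG$ by convolution and Fubini.

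\textbf{Step 1: integrability.} For $y_n\neq0$ the support of $\rho$ forces $\lvert y'\rvert<\lvert y_n\rvert$. Hence $\lvert\Phi(y)\rvert\leq c\,\lvert y_n\rvert^{1-n}\lVert\rho\rVert_\infty$, supported in the cone $\{\lvert y'\rvert<\lvert y_n\rvert\}$. This gives
\begin{equation*}
\lVert\Phi(\cdot,y_n)\rVert_{\LL^p(\mathbb R^{n-1})}\leq c\,\lvert y_n\rvert^{1-n+\frac{n-1}{p}}.
\end{equation*}
By Minkowski's (Young's) inequality for convolution with a measure,
\begin{equation*}
\lVert\GG(\cdot,y_n)\rVert_{\LL^p}\leq c\,\lvert y_n\rvert^{(1-n)(1-\frac1p)}\lvert\sigma\rvert(\mathbb R^{n-1}).
\end{equation*}
Raising to the $p$-th power and integrating over $y_n\in(-T,T)$ is finite exactly when $p(n-1)(1-\frac1p)<1$, that is, $p<\frac{n}{n-1}$. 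Taking the $p$-th root produces the factor $T^{\frac np+1-n}$, which proves \ref{item:G_integrability}.

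\textbf{Step 2: divergence-free.} On $\{y_n>0\}$ I will verify $\div\Phi=0$ pointwise. Write $\Phi=-y\,\lvert y_n\rvert^{-n}\rho(y'/y_n)$. The function $\psi(y)=y_n^{-n}\rho(y'/y_n)$ is homogeneous of degree $-n$, so Euler's identity gives $y\cdot\nabla\psi=-n\psi$. Therefore
\begin{equation*}
\div(y\psi)=n\psi+y\cdot\nabla\psi=0.
\end{equation*}
The case $y_n<0$ is symmetric. Since $\GG(\cdot,y_n)$ is a convolution in $y'$ of a smooth kernel with $\sigma$, differentiation under the integral gives $\div\GG=0$ on $\{y_n\neq0\}$.

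Across $\{y_n=0\}$ I test with $\phi\in\CC^1_{\rmc}(\mathbb R^n)$ and compute $\int\GG\cdot\nabla\phi\,\d y$ by Fubini, moving the integral onto $\sigma$. For fixed $z'$, apply the divergence theorem on $\{\lvert y_n\rvert>t\}$ and let $t\downarrow0$. The boundary contribution on $\{y_n=t\}$ is
\begin{equation*}
-\int\Phi_n(y'-z',t)\,\phi(y',t)\,\d y'=\int t^{1-n}\rho\Big(\frac{y'-z'}{t}\Big)\phi(y',t)\,\d y'\to\phi(z',0).
\end{equation*}
The face $\{y_n=-t\}$ has outward normal $+e_n$ and $\Phi_n(y',-t)=-t^{1-n}\rho(\cdot)$. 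So the two faces give $+\phi(z',0)$ and $-\phi(z',0)$, which cancel. The total $\int\GG\cdot\nabla\phi=0$ follows, i.e.\ \ref{item:G_divfree}. Fubini is justified by the $\LL^1_{\loc}$ bound of Step 1, since $\lvert\Phi\rvert\lvert\nabla\phi\rvert$ is integrable against $\mathcal L^n\otimes\lvert\sigma\rvert$.

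\textbf{Step 3: trace.} The same computation restricted to the upper half-space gives
\begin{equation*}
\langle\GG\cdot\nu,\phi\rangle_{\partial\mathbb R^n_+}=\int_{\mathbb R^n_+}\GG\cdot\nabla\phi=\int\phi(z',0)\,\d\sigma(z').
\end{equation*}
Here $\div\GG=0$, and the trace is understood with the outer normal $-e_n$ as in the paper's sign conventions. I will check the sign carefully so that it is $+\sigma$. The lower half-space yields the opposite sign, which is \ref{item:G_trace}.

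The test functions for the trace are only $\mathrm{Lip}_{\rmb}$ and not compactly supported. To handle this I will first use a cutoff in $y_n$ via the distributional identity \eqref{eq: dist normal trace}, or use the decay at infinity in $y'$ from the compact support of the cone sections plus the finite mass of $\sigma$.

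\textbf{Main obstacle.} The main obstacle is justifying the limit $t\downarrow0$ uniformly enough to pass under the $\sigma$-integral. The inner quantity is bounded by $\lVert\phi\rVert_\infty\lVert\rho\rVert_{\LL^1}$ and converges for every $z'$, so dominated convergence with respect to $\lvert\sigma\rvert$ suffices. The integrals over $\{\lvert y_n\rvert>t\}$ converge to the full integrals by the integrability from Step 1.
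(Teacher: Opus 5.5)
Your argument is correct. For (ii) and (iii), however, it takes a genuinely different route from the paper.

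For (i), the paper uses the scaling identity $\int\lvert\Phi(y',y_n)\rvert^p\,\d y'=\lvert y_n\rvert^{-(n-1)(p-1)}\int\lvert\Phi(z',1)\rvert^p\,\d z'$. Your pointwise bound on the cone is equivalent. Off $\{y_n=0\}$, the paper checks $\div\Phi=0$ by differentiating explicitly; your Euler-homogeneity argument is a cleaner version of the same computation.

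The real difference is across the hyperplane. The paper obtains (iii) by invoking \v{S}ilhav\'y's averaged-trace formula (Lemma~\ref{lem:silhavy_admissibility}) together with $\GG_n(\cdot,y_n)=-\rho_{\lvert y_n\rvert}\ast\sigma\weakstarto-\sigma$. It then deduces (ii) from (iii) by adding the traces from the two half-spaces. You instead use Fubini to reduce to a single translated kernel $\Phi(\cdot-z',\cdot)$, apply the classical divergence theorem on $\{\pm y_n>t\}$, and let $t\downarrow0$. The two faces cancel, which gives (ii) directly, and the one-sided computation gives (iii).

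What each route buys:
\begin{itemize}
\item Your route is self-contained and avoids the cited theorem. As recorded in Lemma~\ref{lem:silhavy_admissibility}, that theorem concerns $U\Subset\Omega$ and a field in $\mathcal{DM}^1(\Omega)$, so applying it to the unbounded half-space requires a localization the paper leaves implicit.
\item The paper's route is shorter given the citation. It also exhibits the trace as exactly the Minkowski-type average that the paper is concerned with.
\end{itemize}

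Some smaller remarks.

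Your ``main obstacle'' is not really one. For each fixed $z'$ you can let $t\downarrow0$ in the $y$-integral first, which gives exactly
\[
\int_{\{y_n>0\}}\Phi(y'-z',y_n)\cdot\nabla\phi(y)\,\d y=\phi(z',0).
\]
So no limit under the $\sigma$-integral is needed, though your dominated-convergence argument is also fine.

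The sign does come out as $+\sigma$ from your own computation. Indeed $\langle\GG\cdot\nu,\phi\rangle_{\partial\mathbb R^n_+}=\int_{\mathbb R^n_+}\GG\cdot\nabla\phi\,\d y$, and the face $\{y_n=t\}$ contributes $-\int\Phi_n(y'-z',t)\phi(y',t)\,\d y'=+\int t^{1-n}\rho\big(\tfrac{y'-z'}{t}\big)\phi(y',t)\,\d y'$.

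Decay in $y'$ will not let you pass to general $\mathrm{Lip}_{\rmb}$ test functions. The obstruction is as $\lvert y_n\rvert\to\infty$: for instance $\int\GG_n(\cdot,y_n)\,\d y'=-\sigma(\mathbb R^{n-1})$ for every $y_n\neq0$, so $\GG\notin\LL^1(\mathbb R^n_+)$ in general. Statement (iii) should therefore be read with compactly supported test functions, or after a cutoff in $y_n$. This is exactly how the paper proves it, with $v\in\CC^\infty_{\rmc}(\mathbb R^n)$.
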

\begin{proof}
  Observe that, for each $y_n \neq 0$, by the change of variables $z' = y'/y_n$ we have
  \begin{equation}\label{eq:Phi_slicewise_integrable}
      \int_{\mathbb R^{n-1}} \lvert \Phi(y',y_n) \rvert^p \,\d y' = \lvert y_n\rvert^{-(n-1)(p-1)}\int_{\mathbb R^{n-1}} \lvert \Phi(z',1) \rvert^p \,\d z' < \infty,
  \end{equation}
  Since $y_n \mapsto \lvert y_n\rvert^{-(n-1)(p-1)}$ is locally integrable on $\mathbb R$ provided $1 \leq p < \frac{n}{n-1}$, we have $\Phi \in \LL^p(\mathbb R^n \times (-T,T))$ for such $p$ and $T>0$.
  Now if $y_n>0$, we can compute
  \begin{align}
    \partial_{y_j}\Phi_j(y',y_n) &= -\frac1{y_n^n} \rho\bigg(\frac{y'}{y_n}\bigg) - \frac{y_j}{y_n^{n+1}} \partial_{y_j}\rho\bigg(\frac{y'}{y_n}\bigg) \quad\mbox{for each } 1 \leq j \leq n-1, \\
    \partial_{y_n}\Phi_n(y',y_n) &= \frac{n-1}{y_n^n} \rho\bigg(\frac{y'}{y_n}\bigg) + \sum_{i=1}^{n-1} \frac{y_i}{y_n^{n+1}} \partial_{y_i}\rho\bigg(\frac{y'}{y_n}\bigg),
  \end{align}
  from which it follows that $\div \Phi = 0$ in $\mathbb R^n_+$, and the calculation on $\mathbb R^n_-$ is analogous.

  If we define $\GG$ via \eqref{eq:G_potential}, by Young's convolution inequality and \eqref{eq:Phi_slicewise_integrable} we have
  \begin{equation}\label{eq:G_integrability_estimate}
    \begin{split}
    &\int_{-T}^T \int_{\mathbb R^{n-1}} \lvert \GG(y',y_n)\rvert^p \,\d y'\,\d y_n \\
    &\quad\leq \lvert \sigma \rvert(\mathbb R^{n-1}) \int_{-T}^T \lvert y_n\rvert^{-(n-1)(p-1)} \,\d y_n \int_{\mathbb R^{n-1}} \lvert \Phi(z',1)\rvert^p\,\d z',
    \end{split}
  \end{equation} 
  from which we infer \ref{item:G_integrability}.
  Since $\Phi$ is smooth and divergence-free on $\mathbb R^n \setminus \partial\mathbb R^n_+$, it follows that $\div \GG = 0$ on $\mathbb R^n \setminus \partial\mathbb R^n_+$ also,
  and the fact that it is divergence free on $\mathbb R^n$ will follow from \ref{item:G_trace}.
  To show this, let $v \in \CC_{\rmc}^{\infty}(\mathbb R^n)$.
  Noting that $\rho_{y_n} \ast \sigma \weakstarto \sigma$ weakly${}^{\ast}$ as measures as $y_n \to 0$ on $\mathbb R^{n-1}$, applying \cite[Theorem.\,2.4]{Silhavy2009} we have
  \begin{equation}
    \begin{split}
      \langle \GG \cdot \nu, v \rangle_{\partial\mathbb R^n_+}
      &= \lim_{\eps \downarrow 0} \frac1{\eps} \int_0^{\eps} \int_{\mathbb R^{n-1}} v(y) \GG(y',y_n) \cdot \mathrm{e}_n \,\d y' \,\d y_n \\
      &= -\lim_{\eps \downarrow 0} \frac1{\eps} \int_0^{\eps} \int_{\mathbb R^{n-1}} v(y) (\rho_{y_n} \ast \sigma)(y') \,\d y' \,\d y_n \\
      &= \int_{\mathbb R^{n-1}} v(y',0) \,\d \sigma(y'),
    \end{split}
  \end{equation} 
  noting that $\dist(y,\partial\mathbb R^n_+) = y_n$ on $\mathbb R^n_+$.
  Similarly, since $\Phi_n(y',-y_n) = \Phi_n(y',y_n)$, we have
  \begin{equation*}
    \langle \GG \cdot \nu, v \rangle_{\partial \mathbb R^n_-} =\lim_{\eps \downarrow 0} \frac{1}{\eps} \int_0^{\eps} \int_{\mathbb R^{n-1}} v(y',-y_n) \GG(y',-y_n) \cdot (-\mathrm{e}_n) \,\d y'\,\d y_n = - \langle \GG \cdot \nu, v \rangle_{\partial \mathbb R^n_+},
  \end{equation*} 
  thereby establishing \ref{item:G_trace}, and as a consequence \ref{item:G_divfree}.
\end{proof}

\begin{proof}[Proof of Theorem~{\rm\ref{thm:prescribed_trace}}]
  Let $\sigma \in \mathcal M(\partial \Omega)$. The proof will involve several steps.

  \noindent\textbf{Step 1}.\ (Reduction to the half-space): Let $r>0$ and $x_1,\cdots,x_N \in \partial\Omega$ such that $\{B_r(x_i)\}_{i=1}^N$ covers $\partial\Omega$, and such that for each $i$, there exists a bi-Lipschitz flattening map
    \begin{equation}
        \varphi_i \colon B_{2r}(x_i) \to B_2(0)
    \end{equation}
    satisfying
    \begin{gather}\label{eq:flattening_1}
      \varphi_i(B_r(x_i)) \subset B_{1}(0) \subset \varphi_i(B_{2r}(x_i)), \\
        \label{eq:flattening_2}
      \varphi_i(\Omega \cap B_{2r}(x_i)) \subset B_2(0) \cap \mathbb R^n_+, \quad
        \varphi_i(B_{2r}(x_i)\setminus\overline\Omega) \subset B_2(0) \cap \mathbb R^n_-,\\
        \label{eq:flattening_3}
      \varphi_i(\partial\Omega \cap B_{2r}(x_i)) \subset B_2(0) \cap \partial\mathbb R^n_+.
    \end{gather}
    In what follows, we abbreviate $B_s^{\pm} = B_s(0) \cap \mathbb R^n_{\pm}$ and $\Gamma_s = B_s(0) \cap \partial\mathbb R^n_+$ for each $s>0$.
    We also denote $\psi_i = \varphi_i^{-1}$, which is well-defined on $B_{1}(0)$, and let $L>0$ be such that each $\varphi_i$ and $\psi_i$ are $L$-Lipschitz.
    Moreover, we can choose $\varphi_i$ to be orientation preserving in that $\det \varphi_i > 0$ a.e. in $B_{2r}(x_i)$.
    Then, using $\eqref{eq:flattening_3}$, we have
    \begin{equation}\label{eq:distance_flattened_equiv}
        L^{-1} \lvert\varphi_i(x)_n\rvert \leq \dist(x,\partial\Omega) \leq L \lvert\varphi_i(x)_n\rvert \quad\mbox{for all $x \in B_{2r}(x_i)$},
    \end{equation}
    and hence it follows for all $0 < \eps < \frac1L$ that
    \begin{equation}\label{eq:flattened_annular}
      \left\{ x \in B_1 : \lvert x_n \rvert < \tfrac{\eps}L \right\} \subset \varphi_i\left(B_{2r}(x_i) \cap B_{\eps}(\partial\Omega)\right) \subset \left\{ x \in B_2 : \lvert x_n \rvert < L \eps\right\}.
    \end{equation}
    We also let $\{\chi_i\}_{i=1}^N \subset \CC^{\infty}_{\rmc}(\mathbb R^n)$ be a partition of unity subordinate to the covering $\{B_r(x_i)\}_{i=1}^N$ of a neighborhood of $\partial\Omega$.

    Extending $\sigma$ by zero to be a measure on $\mathbb R^n$ we set,
    \begin{equation}\label{eq:Gi_1}
      \tilde\sigma_i = (\varphi_i)_{\#}(\sigma \mres B_{2r}(x_i)) \in \mathcal M(\Gamma_1),
    \end{equation} 
    which we view as a measure on $\mathbb R^{n-1}$.
    By applying Lemma~\ref{lem:G_potential} to each $\tilde{\sigma}_i$, we obtain divergence-free fields $\GG_i \in \LL^1(B_1)$ satisfying
    \begin{equation}
      \langle \GG_i \cdot \nu, \,\cdot\, \rangle_{\partial B_1^+} \mres \Gamma_1 = \widetilde\sigma_i \quad\mbox{as measures on $\Gamma_1$}.
    \end{equation} 
    Also for each $\eps \in (0,\frac1L)$, applying the estimate in \ref{item:G_integrability} with $p=1$ and $T=\eps$, we obtain the estimate
    \begin{equation}\label{eq:Gi_2}
    \frac1{\eps} \int_{B_1} \mathbbm{1}_{\R^{n-1}\times(-\eps,\eps)}(y) \lvert \GG_i(y) \rvert \,\d y \leq C \lvert \tilde\sigma_i \rvert(\Gamma_1) < \infty \quad\mbox{for all $0<\eps<\tfrac1L$}.
    \end{equation}

    \noindent\textbf{Step 2.}\ (Transform and patching): We now transform the constructed fields $\GG_i$ back along the flattening map $\varphi_i$ and patch them using a partition of unity.
    For each $1 \leq i \leq N$, we define $\FF_i$ on $B_r(x_i)$ via the Piola transform as
    \begin{equation}\label{eq:Fi_defn}
      \FF_i= (\det \nabla\varphi_i) \, (\nabla\varphi_i)^{-1} \cdot \GG_i  \circ \varphi_i \quad\mbox{in } B_r(x_i),
    \end{equation} 
    which equivalently satisfies
    \begin{equation}\label{eq:Gi_defn}
      \GG_i =   (\cof \nabla \psi_i)^{\top} \cdot \FF_i \circ \psi_i\quad\mbox{in }\varphi_i(B_r(x_i))\subset B_1(0).
    \end{equation} 
    Since $\GG_i \in \LL^p(B_1)$ for each $1 \leq p < \frac{n}{n-1}$, we have $\FF_i \in \LL^p(B_r(x_i))$.
    By noting that $\div \cof \nabla\psi_i = 0$ (the Piola identity) and that $\nabla (\FF_i \circ \psi_i) = (\nabla \psi_i)^{\top} \cdot (\nabla \FF_i) \circ \psi_i$, this field satisfies
   \begin{equation}
     \begin{split}
     \div \GG_i = \mathrm{Tr}( (\cof\nabla\psi_i)^{\top} \cdot (\nabla \psi_i)^{\top} \cdot (\nabla\FF_i)\circ\psi_i ) = (\det \nabla \psi_i) (\div \FF_i) \circ \psi_i
   \end{split}
    \end{equation} 
    in $\varphi_i(B_r(x_i)) \setminus \{ y_n =0 \}$. 
    Hence, by the area formula (see e.g.\ \cite[Theorem.\,2.71, (2.47)]{AmbrosioEtAl2000}), if $v \in \CC^1_{\rmc}(B_r(x_i))$ we have
    \begin{equation}
      \int_{\Omega \cap B_r(x_i)} v(x) \div \FF_i(x) \,\d x = \int_{B_1^+} (v \circ \psi_i)(y) \div \GG_i(y) \,\d y.
    \end{equation} 
    Similarly using \eqref{eq:Gi_defn} and noting that $(\cof \nabla \psi)^{-\top} = (\det \nabla\psi_i)^{-1} (\nabla \psi_i)^{\top}$, we can compute that
    \begin{equation}
      \begin{split}
        (\nabla v \cdot \FF_i)\circ \psi_i 
        &= [(\nabla \psi_i)^{-\top} \cdot \nabla (v \circ \psi_i)] \cdot [(\cof \nabla \psi_i)^{-\top} \cdot \GG_i]\\
        &= \frac1{\det\nabla\psi_i} \, \nabla(v \circ \psi_i) \cdot \GG_i,
      \end{split}
    \end{equation} 
    so by the area formula we have
    \begin{equation}
      \int_{\Omega \cap B_r(x_i)} \nabla v(x) \cdot \FF_i(x) \,\d x = \int_{B_1^+} \nabla(v \circ \psi_i(y)) \cdot \GG_i(y) \,\d y.
    \end{equation} 
    Therefore
    \begin{equation}\label{eq:Fi_trace}
      \begin{split}
        \langle \FF_i \cdot \nu, v\rangle_{\partial\Omega}
        &=  \int_{\Omega \cap B_r(x_i)} \nabla v \cdot \FF_i \,\d x + \int_{\Omega \cap B_r(x_i)} v \div \FF_i \,\d x \\
        &= \int_{B_1^+} \nabla (v \circ \psi_i) \cdot \GG_i \,\d y + \int_{B_1^+} v \circ \psi_i \div \GG_i \,\d y  \\
        &= \int_{\Gamma_1} v \circ \psi_i \,\d \widetilde\sigma_i = \int_{\partial\Omega \cap B_r(x_i)} v \,\d \sigma.
      \end{split}
    \end{equation} 
    Arguing analogously on the complement of $\Omega$, we can show that
    \begin{equation}
       \int_{B_r(x_i)\setminus\overline\Omega} \nabla v \cdot \FF_i \,\d x + \int_{B_r(x_i)\backslash\overline\Omega} v \div \FF_i \,\d x = -\int_{\partial\Omega \cap B_r(x_i)} v \,\d\sigma,
    \end{equation}
    so it follows in particular that $\div \FF_i = 0$ in $B_r(x_i)$.
    Using \eqref{eq:flattened_annular} and \eqref{eq:Gi_2}, we can also verify for $0 < \eps < \frac1{L^2}$ that
    \begin{equation}\label{eq:Fi_limsup}
      \begin{split}
      \frac1{\eps}\int_{B_{\eps}(\partial\Omega)\cap B_r(x_i)} \lvert \FF_i \rvert \,\d x
      &\leq \frac1{\eps} \int_{-L \eps}^{L \eps} \int_{\mathbb R^{n-1}} \lvert \GG_i \rvert (\det \nabla\psi_i)^{-1} \,\d y'\,\d y_n \\
      &\leq c \lvert \widetilde\sigma_i\rvert(\Gamma_1) \leq c \lvert \sigma\rvert(\partial\Omega \cap B_r(x_i)).
    \end{split}
    \end{equation} 

    Now, recall that $\{\chi_i\}$ was a partition of unity subordinate to $\{B_r(x_i)\}$ of $\partial\Omega$. Extending each $\chi_i\FF_i$ by zero to $\mathbb R^n$, we define
    \begin{equation}
      \FF = \sum_{i=1}^N \chi_i \FF_i,
    \end{equation} 
    which is 
    compactly supported and $\LL^p$-integrable for each $1 \leq p < \frac{n}{n-1}$.
    Also since each $\FF_i$ is divergence-free in $B_r(x_i)$, we have
    \begin{equation}
      \label{eq: div FF}
      \div \FF = \sum_{i=1}^N \nabla\chi_i  \cdot \FF_i \in \LL^p(\mathbb R^n)
    \end{equation} 
    for each $1 \leq p < \frac{n}{n-1}$.
    Using \eqref{eq:Fi_limsup} we also have
    \begin{equation}
      \frac1{\eps} \int_{B_{\eps}(\partial\Omega)} \lvert \FF \rvert \,\d x \leq C \lvert \sigma \rvert(\partial\Omega) \quad\mbox{for all $0 < \eps < \tfrac1{L^2}$},
    \end{equation} 
    which implies \eqref{eq:prescribed_minkowski}.
    To show the trace is attained, let $v \in \CC^1_{\rmc}(\mathbb R^n)$ and observe that $\chi_i v \in \CC^1_{\rmc}(B_r(x_i))$ for each $i$.
    Then, 
    using the linearity of the trace and \eqref{eq:Fi_trace}, we have
    \begin{equation}
      \begin{split}
        \langle \FF \cdot \nu, v \rangle_{\partial\Omega}
        &= \sum_{i=1}^N \langle (\chi_i\FF_i) \cdot \nu, v \rangle_{\partial\Omega}\\
        &= \sum_{i=1}^N \left[\int_{\Omega \cap B_r(x_i)} \chi_i \nabla v \cdot \FF_i \,\d x + \int_{\Omega \cap B_r(x_i)}  v \,\div (\chi_i\FF_i) \,\d x   \right] \\
        &= \sum_{i=1}^N \left[\int_{\Omega \cap B_r(x_i)} \nabla(\chi_i v) \cdot \FF_i \,\d x + \int_{\Omega \cap B_r(x_i)} \chi_i v \,\div \FF_i \,\d x   \right] \\
        &= \sum_{i=1}^N \int_{\partial\Omega\cap B_r(x_i)} \chi_i v \,\d \sigma 
        = \int_{\partial\Omega} v \,\d \sigma,
      \end{split}
    \end{equation} 
    where in the last line we used that $\sum_{i=1}^N \chi_i \equiv 1$ on $\partial\Omega$,
    verifying \eqref{eq:prescribed_trace}.

    \textbf{Step 3}.\ (Divergence-free correction):
    Suppose that $\int_{\partial\Omega} \d \sigma = 0$.
    By the previous step, there exists a compactly supported vector field $\FF$ satisfying \eqref{eq:prescribed_minkowski} and \eqref{eq:prescribed_trace}, along with the estimate
  \begin{equation}
    \lVert \FF \rVert_{\LL^p(\mathbb R^n)} + \lVert \div \FF \rVert_{\LL^p(\mathbb R^n)} \leq C_p \lvert\sigma \rvert(\partial\Omega) \quad\mbox{for all } 1 \leq p < \frac{n}{n-1},
  \end{equation} 
  which follows from Lemma~\ref{lem:G_potential}\ref{item:G_integrability} and \eqref{eq: div FF}.
  Since we have 
  \begin{equation}
    \int_{\Omega} \div \FF \,\d x = \langle \FF \cdot \nu, \mathbbm{1}_{\mathbb R^n}\rangle_{\partial\Omega} = \int_{\partial\Omega} \d\sigma = 0,
  \end{equation}
  we can apply Bogovskii's operator from \cite{Bogovskii1979} to obtain a field $\mathbf{v}_1 \in \WW_0^{1,1}(\Omega;\mathbb R^n)$ satisfying
  \begin{equation}
    \div \mathbf{v}_1 = \div \FF \quad\mbox{in } \Omega
  \end{equation} 
  and the estimate
  \begin{equation}
    \lVert \mathbf{v}_1 \rVert_{\WW^{1,p}(\Omega)}  \leq C \lVert \div \FF \rVert_{\LL^{p}(\mathbb R^n)} \leq C_p \lvert \sigma\rvert(\partial\Omega) \quad\mbox{for all } 1 < p < \frac{n}{n-1}.
  \end{equation}
  To verify that $\mathbf{v}_1$ satisfies \eqref{eq:prescribed_minkowski}, we consider the bi-Lipschitz mappings $\varphi_i = \psi_i^{-1} \colon B_{2r}(x_i) \to B_2(0)$ from the previous two steps.
  Since $\{\chi_i\}_{i=1}^N$ is a partition of unity of a neighborhood of $\partial\Omega$ subordinate to $\{B_r(x_i)\}_{i=1}^N$, there is $\eps_0>0$ such that $\sum_i \chi_i \equiv 1$ on $B_{\eps_0}(\partial\Omega)$.
  Then for $0<\eps<\eps_0/L$ and each $1 \leq i \leq N$ we have
  \begin{equation}
    \frac1{\eps} \int_{\Omega \cap B_{\eps}(\partial\Omega) \cap B_r(x_i)} \chi_i\lvert \mathbf{v}_1\rvert \,\d x \leq \frac1{\eps} \int_0^{L\eps} \int_{\mathbb R^{n-1}} \chi_i \circ \psi_i \lvert \mathbf{v}_1 \circ \psi_i \rvert (\det \nabla\psi_i)^{-1}\,\d y.
  \end{equation} 
  Moreover, since $\mathbf{v}_1 \circ \psi \in \WW^{1,1}(B_1^+;\mathbb R^n)$ vanishes on $\Gamma_1$, for $\mathcal L^{n-1}$-a.e.\ $y' \in \mathbb R^{n-1}$ we can estimate
  \begin{equation}
    \begin{split}
      \frac1{\eps} \int_0^{L \eps} \lvert\mathbf{v}_1 \circ \psi_i \rvert(y',y_n) \,\d y_n 
      &\leq \frac1{\eps} \int_0^{L \eps} \int_0^{y_n} \lvert \nabla \mathbf{v}_1 \circ \psi_i \rvert(y',t) \lvert \nabla \psi_i\rvert(y',t) \,\d t \,\d y_n \\
      &\leq L \int_0^{L \eps} \lvert \nabla \mathbf{v}_1 \circ \psi_i \rvert(y',t) \lvert \nabla \psi_i \rvert(y',t)\,\d t.
    \end{split}
  \end{equation}
  Combining the above estimates gives
  \begin{equation}
    \begin{split}
      \frac1{\eps} \int_{\Omega\cap B_{\eps}(\partial\Omega)\cap B_r(x_i)} \chi_i\lvert \mathbf{v}_1\rvert \,\d x
      &\leq L^2\int_0^{L\eps}\int_{\mathbb R^{n-1}} \chi_i \circ \psi_i \lvert \nabla \mathbf{v}_1 \circ \psi_i \rvert  (\det \nabla \psi_i)^{-1} \,\d y \\
      &= L^2 \int_{\Omega  \cap B_{L^2\eps}(\partial\Omega)\cap B_{2r}(x_i)} \chi_i \lvert \nabla\mathbf{v}_1 \rvert \,\d x,
    \end{split}
  \end{equation} 
  which vanishes in the limit $\eps \downarrow 0$.
  Therefore summing over $i$ we infer that
  \begin{equation}\label{eq:v1_admissible}
    \limsup_{\eps \downarrow 0} \frac1{\eps} \int_{\Omega \cap B_{\eps}(\partial\Omega)} \lvert \mathbf{v}_1\rvert \,\d x = 0.
  \end{equation} 

  For the exterior correction, let $R>0$ such that $\overline{\Omega} \subset B_R$, which exists since $\Omega$ is bounded,
  and let $\{C_j\}_{j=1}^J$ denote the connected components of $B_R \setminus \overline{\Omega}$.
  Then for each $1 \leq j \leq J$, fix $x_j \in C_j$, $r_j>0$ such that $\overline{B}_{r_j}(x_j) \subset C_j$, and put $\widetilde{C}_j = C_j \setminus \overline{B}_{r_j}(x_j)$.
  Then setting $A = B_R \setminus \bigcup_{j=1}^J \overline{B}_{r_j}(x_j)$, we have $A \setminus \overline\Omega = \bigcup_{j=1}^J \widetilde C_j$, and each $\widetilde C_j$ is a Lipschitz domain.

  We then choose $\boldsymbol{\zeta}_j \in \CC^{\infty}(\overline C_j;\mathbb R^n)$ such that $\boldsymbol{\zeta}_j = 0$ in a neighbourhood of $\partial C_j$, and that $\int_{\widetilde{C_j}} \div \boldsymbol{\zeta}_j \neq 0$; for instance take $\boldsymbol{\zeta}_j(x) = \eta_j(x)\frac{x-x_j}{\lvert x - x_j\rvert^{n}}$ with $\eta_j \in \CC^{\infty}_{\rmc}(C_j)$ a cutoff such that $\eta_j \equiv 1$ in a neighbourhood of $\overline B_{r_j}(x)$.
  Then for each $j$, there exists $k_j \in \mathbb R$ for which
  \begin{equation}\label{eq:defining_k_j}
    \int_{C_j} \div \FF - k_j \div \boldsymbol{\zeta}_j \,\d x = 0.
  \end{equation} 
By applying Bogovskii's result to each of the fields $\FF - k_j\boldsymbol{\zeta}_j$ on $\widetilde C_j$, we obtain a vector field $\mathbf{v}_2 \in \WW^{1,1}_0(A\setminus\overline\Omega;\mathbb R^n)$ such that
  \begin{equation}
    \div \mathbf{v}_2 = \div \FF - \sum_j k_j \div \boldsymbol{\zeta}_j \quad\mbox{on }A \setminus \overline\Omega = \bigcup_{j=1}^J \widetilde{C}_j.
  \end{equation} 
  Additionally, by \eqref{eq:defining_k_j} and using the estimate from Bogovskii's result, we deduce that $\mathbf{v}_2$ satisfies the estimate
  \begin{align}
          \begin{split}
    \lVert \mathbf{v}_2\rVert_{\WW^{1,p}(A\setminus\overline\Omega)} 
    &\leq c \sum_j \lVert \div \FF - k_j \div\boldsymbol{\zeta}_j \rVert_{\LL^p(A\setminus\overline\Omega)} \\
    &\leq c (\lVert \FF\rVert_{\LL^p(\mathbb R^n)}+ \lVert \div \FF \rVert_{\LL^p(\mathbb R^n)}) \leq c \lvert\sigma\rvert(\partial\Omega)
    \end{split}
  \end{align}
  for each $1 < p < \frac{n}{n-1}$.
  Arguing analogously as in the proof of \eqref{eq:v1_admissible} we infer that
  \begin{equation}
    \limsup_{\eps \downarrow 0} \frac1{\eps} \int_{B_{\eps}(\partial\Omega)\setminus\overline\Omega} \lvert\mathbf{v}_2\rvert \,\d x = 0.
  \end{equation} 
  To conclude we set
  \begin{equation}
    \mathbf{v} = \mathbf{v}_1 \mathbbm{1}_{\Omega} + \mathbf{v}_2 \mathbbm{1}_{A \setminus \overline{\Omega}} + \sum_j k_j  \boldsymbol{\zeta}_j,
  \end{equation} 
  and verify that the field $\FF - \mathbf{v}$ satisfies all of the claimed properties in $A$, which is a neighbourhood of $\overline\Omega$.
\end{proof}

\begin{remark}[Globally divergence-free fields]\label{rem:globally_divfree}
  In the setting of Theorem~\ref{thm:prescribed_trace}, let $\{C_j\}$ denote the connected components of $\mathbb R^n \setminus \overline{\Omega}$.
  If $\sigma$ satisfies $\int_{\partial C_j} \,\d \sigma = 0$ for each $j$, then we can choose $\FF$ to be divergence-free in $\mathbb R^n$.
  This occurs in particular when $\mathbb R^n \setminus \overline\Omega$ is connected and $\int_{\partial\Omega} \d\sigma = 0$.

  Indeed from the construction in Steps 1 and 2 of the proof, we obtain a compactly supported field $\FF \in \LL^1(\mathbb R^n)$, so choose $R>0$ such that $\overline\Omega \cup \spt(\FF) \subset B_R(0)$.
  Recalling that $C_j$ are the connected components of $B_R \setminus \overline\Omega$, since
  $\int_{C_j} \div \FF \,\d x = 0$ for each $j$, we can apply Bogovskii's result on each $C_j$ to obtain a $\mathbf{v}_2 \in \WW^{1,1}_0(B_R \setminus \overline\Omega;\mathbb R^n)$ such that $\div\mathbf{v}_2 = \div \FF$. From here the result follows by an analogous argument as in Step 3.
\end{remark}

\begin{remark}
  This result is in contrast to Lemma~\ref{lem:schuricht_trace} of Schuricht, which implies that if $\FF \in \mathcal{DM}^1(\Omega)$ and $U \subset \Omega$ is $h$-admissible with respect to the precise majorant of $\FF$, then the normal trace of $\FF$ on $\partial U$ is a measure satisfying
  \begin{equation}
    \lvert (\FF \cdot \nu)_{\partial U}\rvert \ll \mathcal H^{n-1} \mres \rbdry U + \lvert \div \FF \rvert \mres \partial U.
  \end{equation} 
  In comparison, Theorem~\ref{thm:prescribed_trace} shows the admissibility condition of \v{S}ilhav\'y, namely that
  \begin{equation}
    \liminf_{\eps \downarrow 0} \frac1{2\eps} \int_{B_{\eps}(\partial\Omega)} \lvert \FF \rvert\,\d x < \infty
  \end{equation} 
  does not imply any additional structure on the normal trace $(\FF \cdot \nu)_{\partial U}$, beyond that it is represented by a measure.
\end{remark}

As a consequence, we can show that the converse statement to Theorem~\ref{th: Estimate with two sided Minkowski} is false. 

\begin{corollary}\label{thm:converse_fails}
  Let $\Omega \subset \mathbb R^n$ be a bounded Lipschitz domain.
  Then there exists a divergence-free field $\FF \in \LL^1(\mathbb R^n)$ with precise majorant $h$ 
  such that
  \begin{equation}\label{eq:converse_fails}
    \int_{\partial\Omega} h \,\d\mathcal{H}^{n-1} = \infty \quad\mbox{whereas}\quad
    \lim_{\eps \downarrow 0} \frac1{2\eps} \int_{B_{\eps}(\partial\Omega)} \lvert \FF \rvert \,\d x < \infty.
  \end{equation} 
\end{corollary}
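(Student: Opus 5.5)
Apply Theorem~\ref{thm:prescribed_trace} with a purely atomic measure of zero total mass, and then use Lemma~\ref{lem:schuricht_trace} to show that $h$ cannot be integrable on $\partial\Omega$.

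\textbf{Step 1: choice of $\sigma$.} Fix two distinct points $a,b\in\partial\Omega$ and set $\sigma=\delta_a-\delta_b\in\mathcal M(\partial\Omega)$, so $\int_{\partial\Omega}\d\sigma=0$. Theorem~\ref{thm:prescribed_trace} gives a field $\FF\in\LL^1$ which is divergence-free near $\overline\Omega$ and satisfies
\begin{equation*}
\limsup_{\eps\downarrow0}\frac1{2\eps}\int_{B_\eps(\partial\Omega)}\lvert\FF\rvert\,\d x\le 2C
\quad\text{and}\quad (\FF\cdot\nu)_{\partial\Omega}=\sigma.
\end{equation*}
We want $\FF$ divergence-free on all of $\mathbb R^n$. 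One option is Remark~\ref{rem:globally_divfree}, but its hypothesis $\int_{\partial C_j}\d\sigma=0$ must then hold. If $\mathbb R^n\setminus\overline\Omega$ is connected this is automatic. Otherwise, choose $a,b$ on the boundary of the same component of $\mathbb R^n\setminus\overline\Omega$; for instance, put both on the boundary of the unbounded component. A simpler alternative is to note that $\FF$ is divergence-free on a neighbourhood $A\supset\overline\Omega$, and all the claims in \eqref{eq:converse_fails} concern only this neighbourhood. The function $\lvert\FF\rvert$ and the majorant near $\partial\Omega$ depend only on $\FF$ near $\partial\Omega$. So if the remark does not apply, we can interpret ``divergence-free'' locally with $\Omega$ in $A$.

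\textbf{Step 2: the Minkowski term.} The finiteness of the Minkowski-type quantity follows directly from \eqref{eq:prescribed_minkowski}. The statement asks for a $\lim$ rather than a $\limsup$. Since the $\limsup$ is finite and the quantity is nonnegative, the limit is finite wherever it exists; I would phrase the conclusion via the bound on the $\limsup$.

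\textbf{Step 3: $h$ is not integrable.} Suppose for contradiction that $\int_{\partial\Omega}h\,\d\mathcal H^{n-1}<\infty$. A Lipschitz domain can be normalized, with $\Omega=\Omega^1$ up to a null set and $\mathcal H^{n-1}(\partial\Omega\setminus\rbdry\Omega)=0$, so Lemma~\ref{lem:schuricht_trace} applies on a domain $\Omega'\Supset\Omega$ where $\FF\in\mathcal{DM}^1$. Because $\div\FF=0$ near $\overline\Omega$, the lemma gives
\begin{equation*}
(\FF\cdot\nu)_{\partial\Omega}=\FF^\ast\cdot\nu_\Omega\,\mathcal H^{n-1}\mres\rbdry\Omega .
\end{equation*}
This measure is absolutely continuous with respect to $\mathcal H^{n-1}$, with density bounded by $h$, which is integrable by assumption. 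But the trace equals $\delta_a-\delta_b$, which is not absolutely continuous with respect to $\mathcal H^{n-1}$ since $\mathcal H^{n-1}(\{a\})=0$ and $n\ge2$. This contradiction gives $\int_{\partial\Omega}h\,\d\mathcal H^{n-1}=\infty$.

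\textbf{Expected obstacle.} The main difficulty is the bookkeeping in Step 3. We must check that the precise majorant $h$ of the globally defined $\FF$ (Definition~\ref{def:precise_majorant}) satisfies \eqref{eq:h_property1} and \eqref{eq:h_property2} on $\Omega'$. We must also check that $\partial\Omega$ versus $\rbdry\Omega$ and the compact containment $\Omega\Subset\Omega'$ are handled correctly. Both points look routine.
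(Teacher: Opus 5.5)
Your proposal is correct and follows essentially the same route as the paper. Both prescribe a zero-mass pair of Dirac masses via Theorem~\ref{thm:prescribed_trace} and Remark~\ref{rem:globally_divfree}, then use Lemma~\ref{lem:schuricht_trace} to rule out $h$-admissibility, since the trace would otherwise be absolutely continuous with respect to $\mathcal H^{n-1}$. Your choice of $a,b$ on the boundary of a single component of $\mathbb R^n\setminus\overline\Omega$ always makes the remark applicable, because each point of a Lipschitz boundary lies on the boundary of exactly one such component. So you should drop the fallback of reading ``divergence-free'' only locally: it is unnecessary and would prove a weaker statement than the corollary claims.
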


\begin{proof}
  Choose $\mu \in \mathcal M(\partial\Omega)$ such that $\int_{\partial\Omega} \,\d \mu = 0$ and $\mu \not\ll \mathcal H^{n-1}$; for instance we can pick two distinct points $x_1,x_2 \in \partial\Omega$ lying on the same connected component and take $\mu = \delta_{x_2} - \delta_{x_1}$.
  Then by Theorem~\ref{thm:prescribed_trace} and Remark~\ref{rem:globally_divfree}, we obtain a divergence-free field $\FF \in \LL^1(\mathbb R^n)$ satisfying \eqref{eq:converse_fails}${}_2$ and such that $(\FF \cdot \nu)_{\partial \Omega} = \mu$.
  However, $\Omega$ is not $h$-admissible, as otherwise Lemma~\ref{lem:schuricht_trace} would imply that $\mu \ll \mathcal H^{n-1}$, which contradicts our standing assumption that $n \geq 2$.
\end{proof}

\section{Applications to Cauchy fluxes}\label{sec:fluxes}

Equipped with the results obtained in the previous sections, we consider two formulations of Cauchy fluxes for $\mathcal{DM}^1$ fields and discuss how they differ.
We will start by recalling these two notions in the literature, using a unified notation.

\subsection{Definitions of the Cauchy flux}
The first formulation is from \cite{DegiovanniEtAl1999}, which is based on prior works of \cite{Silhavy1985,Silhavy1991}. A related formulation can be found in \cite{Schuricht2007a}.

\begin{definition}\label{def:h_admissible}
  Let $h$ be a non-negative integrable function on $\Omega$.
  The class of \emph{$h$-admissible} sets, denoted by $\mathcal{P}_h$, consists of sets $E\subset\Omega$ of finite perimeter that are normalized, in that $E^1 = E$, and satisfy
  \begin{equation}
    \int_{\rbdry E} h \,\d\mathcal H^{n-1} < \infty.
  \end{equation} 
  In addition, given a measure $\sigma$ on $\Omega$, we denote by $\mathcal P_{h,\sigma}$ the class of sets $E \in \mathcal{P}_h$ for which $\lvert\sigma\rvert(\mbdry E)=0$ also holds.
\end{definition}

The following is an equivalent formulation of Cauchy fluxes as defined in \cite{DegiovanniEtAl1999}.

\begin{definition}\label{def:cauchyflux1}
  Let $\Omega \subset \mathbb R^n$ be open, $\sigma$ be a signed Radon measure on $\Omega$, and $h$ be a non-negative integrable function on $\Omega$.
  A \emph{Cauchy flux} in the sense of \cite{DegiovanniEtAl1999} in $\Omega$ is a set function $\mathcal Q$ defined on pairs $(S,E)$ with $E \in \mathcal P_{h,\sigma}$ and $S \subset \mbdry E$ any Borel subset, which satisfies the following:
  \begin{enumerate}[label=(\alph*)]
    \item\label{item:localisation_1} If $E_1, E_2 \in \mathcal P_{h,\sigma}$ and $S \subset \mbdry E_1 \cap \mbdry E_2$ such that $\nu_{E_1} = \nu_{E_2}$ holds $\mathcal H^{n-1}$-a.e.\,on $S$, then $\mathcal Q_{E_1}(S) = \mathcal Q_{E_2}(S)$.

    \item If $E \in \mathcal P_{h,\sigma}$ and $S_1, S_2 \subset \mbdry E$ are disjoint, then $$\mathcal Q_E(S_1 \cup S_2) = \mathcal Q_E(S_1) + \mathcal Q_E(S_2).$$

    \item\label{item:blaw1} The balance law $\mathcal Q(\mbdry  E) = \sigma(E)$ holds for all $E \in \mathcal P_{h,\sigma}$.

    \item\label{item:hbound_1} For any $E \in \mathcal P_{h,\sigma}$ and any Borel subset $S \subset \mbdry E$, we have the estimate
      \begin{equation*}
        \lvert \mathcal Q_{E}(S)\rvert \leq \int_S h \,\d\mathcal H^{n-1} 
      \end{equation*} 
  \end{enumerate}
\end{definition}

Compared to \cite[Definition~3.9]{DegiovanniEtAl1999}, the notable difference is the addition of \ref{item:localisation_1}, which is implicit in the original work as the authors work with \emph{oriented surfaces} $(S,\nu_S)$.
Moreover, in place of \ref{item:blaw1} the authors only assume a \emph{weak balance law} in that $\lvert \mathcal{Q}(\partial E)\rvert \leq \mu(E)$ for some non-negative measure $\mu$ on $\Omega$.

\begin{theorem}[\cite{DegiovanniEtAl1999}]\label{thm:flux_equiv_1}
  Let $\mathcal Q$ be a Cauchy flux in the sense of Definition~\ref{def:cauchyflux1}.
  Then, there exists a unique field $\FF \in \mathcal{DM}^{1}(\Omega)$ such that
  \begin{equation}\label{eq:flux_as_trace1}
    \mathcal{Q}_E(S) = (\FF \cdot \nu)_{\partial E}(S)  \quad\mbox{for all $E \in \mathcal P_{h,\sigma}$ and $S \subset \mbdry E$ a Borel set}.
  \end{equation} 
  Conversely, if $\FF \in \mathcal{DM}^1(\Omega)$, letting $\sigma = \div \FF$ and $h$ be a precise majorant of $\FF$ as in Definition~\ref{def:precise_majorant}, then \eqref{eq:flux_as_trace1} defines a Cauchy flux as in Definition~\ref{def:cauchyflux1}.
\end{theorem}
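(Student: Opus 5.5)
The theorem has two directions. For the first, given a Cauchy flux $\mathcal Q$, we must construct $\FF$ and show that $\mathcal Q$ equals its normal trace. For the converse, given $\FF\in\mathcal{DM}^1(\Omega)$, we must check axioms (a)--(d) of Definition~\ref{def:cauchyflux1} for the set function defined by \eqref{eq:flux_as_trace1}.

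\textbf{Converse direction.} Let $\FF\in\mathcal{DM}^1(\Omega)$, let $\sigma=\div\FF$, and let $h$ be the precise majorant. For $E\in\mathcal P_{h,\sigma}$, Lemma~\ref{lem:schuricht_trace} gives
\[
  (\FF\cdot\nu)_{\partial E}=-g_E\,\div\FF\mres\mbdry E+\FF^\ast\cdot\nu_E\,\mathcal H^{n-1}\mres\rbdry E .
\]
Since $|\sigma|(\mbdry E)=0$, the first term vanishes, so $\mathcal Q_E(S)=\int_{S\cap\rbdry E}\FF^\ast\cdot\nu_E\,\d\mathcal H^{n-1}$. The axioms then follow directly.
\begin{itemize}
  \item Additivity (b) is countable additivity of the measure.
  \item Localisation (a) holds because the density depends only on $S$ and $\nu_E$, and $\mathcal H^{n-1}(\mbdry E\setminus\rbdry E)=0$.
  \item The bound (d) follows from $|\FF^\ast|\le h$, since $|\FF_k|\le h$ passes to the limit.
  \item For the balance law (c), test the trace with $\phi\equiv1$ near $\overline E$ (this is where $E\Subset\Omega$ is used). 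This gives $\mathcal Q_E(\mbdry E)=\int_E\d\div\FF=\sigma(E)$.
\end{itemize}

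\textbf{Existence of $\FF$.} Given $\mathcal Q$, we follow the Ziemer/Degiovanni--Marzocchi--Musesti strategy.
\begin{itemize}
  \item First fix a direction $e_j$. For a.e.\ $x$, consider small cubes or half-cubes $Q$ with a face $S$ through $x$ having normal $e_j$.
  \item Define the signed measure $S\mapsto\mathcal Q(S)$ on the hyperplane slices. By (d) it is absolutely continuous with respect to $h\,\mathcal H^{n-1}$, which yields densities $F_j$.
  \item To show consistency, vary the hyperplane: Fubini together with $h\in\LL^1$ shows that a.e.\ hyperplane is admissible.
  \item Define $\FF=(F_1,\dots,F_n)$. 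The balance law on cubes (c), combined with additivity (b) and localisation (a), gives $\int\FF\cdot\nabla\phi=-\int\phi\,\d\sigma$ for test functions built from cubes. Hence $\div\FF=\sigma$ and $\FF\in\mathcal{DM}^1$.
\end{itemize}

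\textbf{Identification on all admissible sets (main obstacle).} It remains to show $\mathcal Q_E=(\FF\cdot\nu)_{\partial E}$ for every $E\in\mathcal P_{h,\sigma}$, not only on polyhedra. The plan is as follows.
\begin{itemize}
  \item Let $\mathcal Q'$ be the flux induced by $\FF$ (it is a Cauchy flux by the converse direction). Then $\mathcal Q-\mathcal Q'$ is a Cauchy flux with zero production that vanishes on a.e.\ face of cubes.
  \item Approximate $E$ by unions of grid cubes, translated generically so that all faces are $h$-admissible and $\sigma$-null.
  \item Use the blow-up of $E$ at points of $\rbdry E$, together with (a) and (b), to conclude that the density of $\mathcal Q_E$ with respect to $\mathcal H^{n-1}\mres\rbdry E$ is $\FF^\ast\cdot\nu_E$. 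The bound (d) controls the error terms; this uses the Lebesgue-point property of $h$ on $\rbdry E$ coming from its precise construction.
\end{itemize}
This approximation step is the delicate part. Uniqueness follows because two such fields have the same flux through a.e.\ cube face, hence they coincide a.e.
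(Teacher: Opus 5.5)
Your converse direction is fine. It is Lemma~\ref{lem:schuricht_trace}, with $|\sigma|(\mbdry E)=0$ removing the singular part, which is how the paper reads the statement; the paper gives no proof of its own and quotes \cite{DegiovanniEtAl1999}. The coordinate-face construction of $\FF$, the proof of $\div\FF=\sigma$ from the balance law on a.e.\ cube, and uniqueness are also workable. You still owe joint measurability of the slice densities and the action--reaction relation across a hyperplane, but both follow from the balance law on thin boxes.

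The genuine gap is the identification step. By the converse direction, the flux $\mathcal Q'$ induced by your $\FF$ is a Cauchy flux only on $\mathcal P_{h_{\FF},\sigma}$, where $h_{\FF}$ is the precise majorant of the \emph{constructed} field. In this direction, however, the $h$ of Definition~\ref{def:cauchyflux1} is an arbitrary non-negative integrable function. Knowing $\lvert\FF\rvert\le Ch$ $\mathcal L^n$-a.e.\ says nothing about $h_{\FF}$ on the $\mathcal L^n$-null set $\rbdry E$. So for $E\in\mathcal P_{h,\sigma}\setminus\mathcal P_{h_{\FF},\sigma}$ you have not shown that the trace of $\FF$ is a measure, and $\FF^\ast$ need not exist $\mathcal H^{n-1}$-a.e.\ on $\rbdry E$. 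Consequently $\mathcal Q-\mathcal Q'$ is simply undefined there. At best you get the identity on $\mathcal P_{h+h_{\FF},\sigma}$, and you would then need a separate uniqueness theorem for fluxes; this is the role of \cite[Theorem~4.9]{DegiovanniEtAl1999}, cf.\ the remark after the statement.

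The blow-up step does not close either, for four reasons.
\begin{enumerate}[label=(\roman*)]
  \item There is no ``precise construction'' of $h$ to invoke in this direction.
  \item Lebesgue points of $h$ on $\rbdry E$ would not control $h$ on the auxiliary surfaces a blow-up uses.
  \item A field built only from coordinate faces says nothing about planes with other normals without a Cauchy tetrahedron argument.
  \item Staircase approximations of $E$ have no control of $\int h\,\d\mathcal H^{n-1}$ on their boundaries.
\end{enumerate}

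You can bypass $\mathcal Q'$, $\FF^\ast$ and blow-ups by intersecting with cubes instead of approximating $E$. Let $E\in\mathcal P_{h,\sigma}$ and let $Q\Subset\Omega$ be an open cube whose face hyperplanes are generic. By this I mean they have finite $\int h\,\d\mathcal H^{n-1}$, are $|\sigma|$-null, meet $\mbdry E$ in an $\mathcal H^{n-1}$-null set, and carry $\FF\cdot\nu$ as the density of $\mathcal Q$; this holds for a.e.\ translate of any cube. Then $E\cap Q\in\mathcal P_{h,\sigma}$. Moreover, $\mbdry(E\cap Q)$ is the disjoint union of three pieces: $\mbdry E\cap Q$, $E\cap\partial Q$, and a subset of the $\mathcal H^{n-1}$-null set $\mbdry E\cap\partial Q$. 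Apply (a) to the first two pieces, comparing with $E$ and $Q$ respectively. Apply (d) to the third piece and (c) to $E\cap Q$. This gives
\[
  \mathcal Q_E(\mbdry E\cap Q)=\sigma(E\cap Q)-\int_{E\cap\partial Q}\FF\cdot\nu_Q\,\d\mathcal H^{n-1}.
\]
Set $T=\sigma\mres E-\mathcal Q_E$. This is a countably additive measure, because $\mathcal Q_E$ is finitely additive and dominated by $h\,\mathcal H^{n-1}\mres\rbdry E$. The identity above says $T(Q)=\int_{\partial Q}\mathbbm{1}_E\FF\cdot\nu_Q\,\d\mathcal H^{n-1}$ for a.e.\ translate of every cube. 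The same averaging-over-translates lemma you use for $\div\FF=\sigma$ then gives $\div(\mathbbm{1}_E\FF)=\sigma\mres E-\mathcal Q_E$. By \eqref{eq: dist normal trace} this is exactly $(\FF\cdot\nu)_{\partial E}=\mathcal Q_E$, and it holds for every $E\in\mathcal P_{h,\sigma}$.
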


\begin{remark}
  In fact, in \cite{DegiovanniEtAl1999} the authors obtain the representation
  \begin{equation}
    \mathcal{Q}_E(S) = \int_{S}\FF^* \cdot \nu_E \,\d\mathcal H^{n-1},
  \end{equation} 
  where $\FF^{\ast}$ is the associated precise representative, which more closely resembles \eqref{eq:Cauchy} from the introduction.
With the form \eqref{eq:flux_as_trace1} however, by using \cite[Proposition~6.5]{Schuricht2007a} it is possible to define the flux $\mathcal Q$ on pairs $(S,E)$ with $E \in \mathcal P_h$ and extend the recovery result \eqref{eq:flux_as_trace1} to this class.
  In this case, one must replace condition \ref{item:hbound_1} with
  \begin{enumerate}[label=(\alph*${}^{\prime}$)]
    \setcounter{enumi}{3}
    \item \label{eq:dprime}  For any $E \in \mathcal P_{h}$ and any Borel subset $S \subset \mbdry E$, we have the estimate
      \begin{equation*}
        \lvert \mathcal Q(S)\rvert \leq \int_S h \,\d\mathcal H^{n-1} + \lvert \sigma \rvert(S).
      \end{equation*} 
  \end{enumerate}
  This requires extending the uniqueness result \cite[Theorem~4.9]{DegiovanniEtAl1999}; since this is not the main focus of the present work, we only sketch the details. More precisely we show that if $\mathcal Q^1$, $\mathcal Q^2$ are two Cauchy fluxes such that $\mathcal Q^1_E(S)=\mathcal Q^2_E(S)$ for all $E \in \mathcal P_{h,\sigma}$ and $S \subset \mbdry E$, then this also holds for all $E \in \mathcal P_h$.
  Here one can run steps IV and V of the proof of Theorem~4.9 (in \cite{DegiovanniEtAl1999}) with general $M \subset \mathcal P_h$, as these steps rely on Theorem~4.7(c) which also holds in $\mathcal P_h$ using \ref{eq:dprime}, and Lemma~4.8 which generalises by applying \cite[Theorem~2.2.2]{Federer1996} with the measure $\mathcal H^{n-1} + \lvert\sigma\rvert$ to obtain a decomposition $S_0 = T_0 \cup (\bigcup_{k=1}^{\infty}T_k)$ such that $\mathcal H^{n-1}(T_0) = \lvert\sigma\rvert(T_0) = 0$.
\end{remark}

The second formulation was recently introduced in \cite{ChenIrvingTorres2025}; this applies in the generality of measure-valued fields, however we restrict to $\LL^1$-integrable fields in the present discussion.
Given a non-negative function $m \in \LL^1(\Omega)$, the admissible class is given by
\begin{equation}
  \mathcal O_m = \bigg\{ U \subset \Omega : \mbox{$U$ is open, } \liminf_{\eps \downarrow 0} \frac1{\eps} \int_{U \cap B_{\eps}(\partial U)} m \,\d x < \infty \bigg\}.
\end{equation} 

\begin{definition}\label{def:cauchyflux2}
  Let $\Omega \subset \mathbb R^n$ be open, $\sigma$ be a signed measure on $\Omega$ and $m \in \LL^1(\Omega)$ be non-negative.
  A \emph{Cauchy flux} in the sense of \cite{ChenIrvingTorres2025} is a set function $\mathcal F$ defined on pairs $(S,U)$ with $U \in \mathcal O_m$ and $S \subset \partial U$ any Borel set, which satisfies the following:
  \begin{enumerate}[label=(\alph*)]
    \item If $U,V \in \mathcal O_m$ and $A \subset \Omega$ is open such that $A \cap U = A \cap V$, then $\mathcal F_U(A \cap \partial U) =\mathcal F_V(A \cap \partial V)$.

    \item If $U \in \mathcal O_m$ and $S_1,S_2 \subset \partial U$ are disjoint Borel subsets, then $\mathcal F_U(S_1 \cup S_2) = \mathcal F_U(S_1) + \mathcal F_U(S_2)$.

    \item\label{item:blaw2} The balance law $\mathcal F(\partial U) = \sigma(U)$ holds for all $U \in \mathcal O_m$.

    \item\label{item:upperbound_2} For any $U \in \mathcal O_{m}$ and any Borel set $S \subset \partial U$, we have
      \begin{equation*}
        \lvert \mathcal F_U(S) \rvert \leq m_U^{n-1}(S)
      \end{equation*} 
      for any limiting measure
      \begin{equation*}
        \frac1{\eps_k} m \,\mathcal L^n \mres (U \cap B_{\eps_k}(\partial U)) \xrightharpoonup{\ \ast\ } m_U^{n-1} \quad\mbox{as $\eps_k \downarrow 0$}.
      \end{equation*} 
  \end{enumerate}
\end{definition}

\begin{theorem}[\cite{ChenIrvingTorres2025}]\label{thm:flux_equiv_2}
  Let $\mathcal F$ be a Cauchy flux in the sense of Definition~\ref{def:cauchyflux2}.
  Then there exists a unique field $\FF \in \mathcal{DM}^1(\Omega)$ such that
  \begin{equation}\label{eq:flux_trace2}
    \mathcal F_U(S) = (\FF \cdot \nu)_{\partial U}(S) \quad\mbox{for all $U \in \mathcal O_m$ and Borel set $S \subset \partial U$}.
  \end{equation} 
  Conversely if $\FF \in \mathcal{DM}^1(\Omega)$ and we let $\sigma = \div \FF$ and $m = \lvert \FF\rvert$, then \eqref{eq:flux_trace2} defines a Cauchy flux as in Definition~\ref{def:cauchyflux2}.
\end{theorem}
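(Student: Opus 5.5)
This theorem is cited from \cite{ChenIrvingTorres2025}, so the plan is to follow the standard two-part structure. The converse direction is easy and I would do it first. The representation direction is the substantive one.

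\textbf{Converse direction.} Given $\FF\in\mathcal{DM}^1(\Omega)$, I set $m=\lvert\FF\rvert$ and $\sigma=\div\FF$. For $U\in\mathcal O_m$, Lemma~\ref{lem:silhavy_admissibility} (via $\lvert\FF\cdot\nabla d_{\partial U}\rvert\le m$) shows that $(\FF\cdot\nu)_{\partial U}$ is a measure. It is the weak$^*$ limit of $-\frac1{\eps_k}\nabla d_{\partial U}\cdot\FF\,\mathcal L^n\mres(U\cap B_{\eps_k}(\partial U))$.
\begin{itemize}
\item Property (d): pass to a further subsequence along which $\frac1{\eps_k}m\,\mathcal L^n\mres(U\cap B_{\eps_k}(\partial U))\weakstarto m_U^{n-1}$. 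The total-variation bound for weak$^*$ limits then gives $\lvert(\FF\cdot\nu)_{\partial U}\rvert\le m_U^{n-1}$. Some care is needed with Borel sets versus open and compact sets; I would handle this by outer regularity.
\item Additivity (b) is immediate.
\item The balance law (c) is the Gauss--Green definition tested against $\phi\equiv1$ near $\overline U$.
\item Locality (a) follows from the distributional identity \eqref{eq: dist normal trace}: $\mathbbm 1_U\FF=\mathbbm 1_V\FF$ on $A$, so the traces agree as measures on $A$.
\end{itemize}

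\textbf{Representation direction.} Given a flux $\mathcal F$, I would define $\FF$ componentwise using cubes. For each coordinate direction $e_i$, look at the half-spaces $H_{t}=\{x_i<t\}$ intersected with small open cubes, and at slabs. The bound (d) forces the flux through a face $\{x_i=t\}\cap Q$ to be controlled by the limiting measure of $m$ on that face. For a.e.\ $t$ (Fubini), that limiting measure is $m\,\mathcal H^{n-1}\mres\{x_i=t\}$. So $S\mapsto\mathcal F(S)$ on hyperplane pieces is absolutely continuous with respect to $\mathcal H^{n-1}$, with density bounded by $m$. Call this density $F_i(\cdot,t)$; Fubini measurability arguments then give $F_i\in\LL^1(\Omega)$ with $\lvert F_i\rvert\le m$.

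Next, apply the balance law to small cubes or boxes $Q$, all of which lie in $\mathcal O_m$ for a.e.\ side position, together with additivity and locality. This gives $\sigma(Q)=\int_{\partial Q}\FF\cdot\nu_Q\,\d\mathcal H^{n-1}$ for almost every box. Averaging over translations yields $\div\FF=\sigma$ distributionally, so $\FF\in\mathcal{DM}^1$.

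\textbf{Main obstacle.} The key step is showing $\mathcal F_U=(\FF\cdot\nu)_{\partial U}$ for a general $U\in\mathcal O_m$, not just for boxes. I would first try a partition of $U$ into a grid of cubes at scale $\delta$, which reduces matters to the boundary layer near $\partial U$. On the layer $U\cap B_{\eps_k}(\partial U)$ I would compare both set functions via locality (a). Both differences are dominated by $m_U^{n-1}$ through (d) and the Šilhavý limit formula. Finally, I would send the grid scale to $0$ along the $\eps_k$ realising the liminf, and conclude by uniqueness of the measure determined on open subsets $A\cap\partial U$. Uniqueness of $\FF$ follows because the fluxes through a.e.\ axis-parallel faces determine $\FF$ a.e.
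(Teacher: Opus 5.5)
The paper gives no proof of this statement; it is imported from \cite{ChenIrvingTorres2025}, so I assess your outline on its own terms. Building $\FF$ from fluxes through coordinate hyperplanes is a sound strategy. However, the step you yourself single out as the main obstacle is not closed by the mechanism you describe. Comparing $\mathcal F_U$ and $(\FF\cdot\nu)_{\partial U}$ on the layer $U\cap B_{\eps_k}(\partial U)$ via (d) and \v{S}ilhav\'y's formula only bounds $\lvert\mathcal F_U-(\FF\cdot\nu)_{\partial U}\rvert$ by a multiple of $m_U^{n-1}$. Refining the grid does not improve this, because the normalised layer mass tends to $m_U^{n-1}(\partial U)$, which is fixed and in general nonzero. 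Equality has to come from an exact identity, and no limit in the grid scale or in $\eps_k$ is needed.

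Here is how to get that identity. Take a box $Q\subset\Omega$ whose face coordinates avoid a null set.
\begin{itemize}
\item Then $U\cap Q\in\mathcal O_m$, since $(U\cap Q)\cap B_\eps(\partial(U\cap Q))\subset (U\cap B_\eps(\partial U))\cup(U\cap Q\cap B_\eps(\partial Q))$.
\item Axiom (a) with $A=Q$ identifies $\mathcal F_{U\cap Q}$ with $\mathcal F_U$ on $Q\cap\partial U$. With $A=U$ it identifies $\mathcal F_{U\cap Q}$ with $\mathcal F_Q$ on $U\cap\partial Q$.
\item The rest of $\partial(U\cap Q)$ lies in $\partial U\cap\partial Q$ and carries no flux. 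Indeed, along a subsequence of the sequence realising the $\liminf$ for $U$, a limiting measure for $U\cap Q$ is at most $m_U^{n-1}+(m\mathbbm{1}_U)\,\mathcal H^{n-1}\mres\partial Q$. Here $m_U^{n-1}$ charges only countably many coordinate hyperplanes, and the second term vanishes on $\partial U$ because $U$ is open.
\end{itemize}
The balance law for $U\cap Q$ then gives
\[
  \mathcal F_U(Q\cap\partial U)=\sigma(U\cap Q)-\int_{U\cap\partial Q}\FF\cdot\nu_Q\,\d\mathcal H^{n-1}.
\]
The same argument applied to the normal traces of $\FF$ shows that $(\FF\cdot\nu)_{\partial U}$ satisfies the identical relation. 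For this, use your converse direction with $\sqrt n\,m\ge\lvert\FF\rvert$, together with $(\FF\cdot\nu)_{\partial Q}=\FF\cdot\nu_Q\,\mathcal H^{n-1}\mres\partial Q$ for generic $Q$. Generic boxes form a $\pi$-system generating the Borel sets, and both measures have total mass $\sigma(U)$, so they coincide.

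Three smaller points also need repair.
\begin{itemize}
\item In the converse, (d) must hold for \emph{every} limiting measure, i.e.\ along an arbitrary sequence $\eps_k\downarrow0$. You only handle subsequences of the sequence from Lemma~\ref{lem:silhavy_admissibility}. The fix is to test $\div\FF$ with the Lipschitz function $\phi\min\{d_{\partial U}/\eps,1\}\mathbbm{1}_U$. This shows that $-\frac1\eps\nabla d_{\partial U}\cdot\FF\,\mathcal L^n\mres(U\cap B_\eps(\partial U))$ converges to the normal trace along the full limit $\eps\downarrow0$.
\item In the construction, joint measurability of $F_i$ is not a Fubini matter, since $\mathcal F$ is an arbitrary set function; it must be extracted from (a) and (c). For example, put $x_i$ last, let $P_t=Q'\times(a,t)$ and fix $b>t$. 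Locality on $\{x_i<t\}$ and the balance law give $\int_{Q'}F_i(x',t)\,\d x'=\sigma(P_t)-\mathcal F_{P_b}(\partial P_b\cap\{x_i<t\})$ for a.e.\ $t$, which is a function of bounded variation in $t$.
\item The identity $\sigma(Q)=\int_{\partial Q}\FF\cdot\nu_Q\,\d\mathcal H^{n-1}$ needs the action--reaction relation $\mathcal F_{P\cap\{x_i>s\}}=-\mathcal F_{P\cap\{x_i<s\}}$ on the common face. You should derive this, for a.e.\ $s$, from the balance law on $P$, $P\cap\{x_i<s\}$ and $P\cap\{x_i>s\}$ together with (a).
\end{itemize}
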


\subsection{Comparison of the admissibility conditions}
The two notions of Cauchy fluxes introduced in the previous section build on fundamentally different classes of admissible sets $\mathcal P_h$ and $\mathcal O_m$, where one considers normalized sets of finite perimeter, whereas the other considers general open sets.
We have seen in Example~\ref{eg:counterexample_kraft} that already for $h \equiv m \equiv 1$, there exists $U \in \mathcal P_h$ such that $U \notin \mathcal O_m$.
However Theorem~\ref{th: Estimate with two sided Minkowski} implies that, roughly speaking, if $U \Subset \Omega$ is sufficiently regular then $U \in \mathcal P_h$ implies that $U \in \mathcal O_m$.
The following theorem makes this assertion precise, and also gives a quantitative bound.

\begin{theorem}\label{thm:flux_comparison}
  Let $\Omega \subset \mathbb R^n$ be an open set, let $\FF \in \mathcal{DM}^1(\Omega)$ with precise majorant $h$, and put $m = \lvert \FF\rvert$.
  Suppose $U \Subset \Omega$ is an open set of finite perimeter such that $\mathcal H^{n-1}(\partial U) = \mathcal H^{n-1}(\rbdry U)$ and $\partial U$ satisfies the lower density condition (Definition~\ref{def:lower_density}).
  Then if $U \in \mathcal P_h$, we have $U \in \mathcal O_{m}$ and
  there is a limiting measure $m_U^{n-1}$ as in Definition~\ref{def:cauchyflux2}\ref{item:upperbound_2} such that
  \begin{equation}\label{eq:hm_measure_comparison}
    m_U^{n-1} \leq c  h\,\mathcal H^{n-1} \mres \partial U \quad\mbox{as measures in $\Omega$},
  \end{equation} 
  where $c = c(n,\gamma_0)>0$.
\end{theorem}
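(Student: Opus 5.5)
The plan is to localize the argument of Theorem~\ref{th: Estimate with two sided Minkowski} with $S=\partial U$. First, $U\in\mathcal P_h$ gives $\int_{\rbdry U} h\,\d\mathcal H^{n-1}<\infty$. We will upgrade this to $\int_{\partial U} h\,\d\mathcal H^{n-1}<\infty$. Since $\mathcal H^{n-1}(\partial U\setminus\rbdry U)=0$ and $h\,\mathcal H^{n-1}$ is absolutely continuous with respect to $\mathcal H^{n-1}$, the integrals over $\partial U$ and over $\rbdry U$ agree. (A set $U\in\mathcal P_h$ is assumed normalized; an open set with $\mathcal H^{n-1}(\partial U\setminus\rbdry U)=0$ is compatible with this.) Because $U\Subset\Omega$, the field $\FF$ may be multiplied by a cutoff equal to $1$ near $\overline U$ and extended by zero, giving an element of $\LL^1(\mathbb R^n)$. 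Near $\partial U$ the mollifications agree for small $\delta_k$, so the precise majorant is unchanged there, up to finitely many initial terms, which only increase $h$ by a locally bounded smooth quantity. It is cleaner to note that Lemma~\ref{lem: general bound with h} only uses $F_k\le h$ for large $k$.

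Next, Lemma~\ref{lem: general bound with h} gives radii $r_k\downarrow0$ and $\gamma_1=\gamma_1(n,\gamma_0)$ such that
\[
\int_{\partial U\cap B_{r_k}(x)}h\,\d\mathcal H^{n-1}\ge\frac{\gamma_1}{r_k}\int_{B_{r_k}(x)}|\FF|\,\d x \quad\text{for all } x\in\partial U,\ k\ge N .
\]
The estimate \eqref{eq:int_minkowski} then yields $\frac1{r_k}\int_{U\cap B_{r_k/2}(\partial U)}m\,\d x\le \frac{c_n}{\gamma_1}\int_{\partial U}h\,\d\mathcal H^{n-1}<\infty$. Setting $\eps_k=r_k/2$, this shows $U\in\mathcal O_m$. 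It also shows that the measures $\mu_k:=\frac1{\eps_k}m\,\mathcal L^n\mres(U\cap B_{\eps_k}(\partial U))$ have uniformly bounded mass and are supported in a fixed compact subset of $\Omega$. By weak$^\ast$ compactness, a subsequence converges to some limiting measure $m_U^{n-1}$, which is supported on $\partial U$.

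The main step is the measure comparison \eqref{eq:hm_measure_comparison}, which we will prove by localizing the covering argument to test sets. Fix an open set $V\subset\Omega$ and a compact set $K\subset V$. Apply Besicovitch's theorem to the family $\{B_{r_k}(x):x\in\partial U\cap K\}$ with $k$ large enough that $r_k<\dist(K,\partial V)$. The chosen balls then lie in $V$ and have overlap at most $c_n$. Summing the local inequality over these balls gives
\[
\mu_k(K)\le\frac{2}{r_k}\int_{B_{r_k/2}(\partial U\cap K)\cap U}m\,\d x\le \frac{2c_n}{\gamma_1}\int_{\partial U\cap V}h\,\d\mathcal H^{n-1}.
\]
One point needs care here: $U\cap B_{\eps_k}(\partial U)\cap K$ must be contained in $B_{\eps_k}(\partial U\cap K')$ for a slightly larger compact set $K'\subset V$, so it is better to cover with centres in $\partial U\cap K'$, where $K'$ is the $\eps_k$-neighbourhood of $K$. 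Upper semicontinuity on compact sets then gives $m_U^{n-1}(K)\le\limsup_k\mu_k(K)\le c\int_{\partial U\cap V}h\,\d\mathcal H^{n-1}$. Taking the supremum over $K\subset V$ yields $m_U^{n-1}(V)\le c\,(h\mathcal H^{n-1}\mres\partial U)(V)$ for every open $V$. Outer regularity of both finite Radon measures extends this to all Borel sets, which gives \eqref{eq:hm_measure_comparison} with $c=2c_n/\gamma_1$, depending only on $n$, $\gamma_0$ and the fixed mollifier.

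The expected obstacle is this localization. Lemma~\ref{lem: general bound with h} produces the specific sequence $r_k$ and not an arbitrary $\eps\downarrow0$. So the comparison can only be asserted for the limiting measure obtained along that sequence, which is why the statement says "there is a limiting measure". The bookkeeping needed to make the balls stay inside $V$ as $k\to\infty$ also has to be handled carefully.
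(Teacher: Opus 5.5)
\textbf{The gap is in the limit passage.} Your overall strategy works, and so does the localized covering bound you derive. With centres taken in $\partial U\cap K'$, as you correct yourself to do, it reads
\[
\mu_k(K)\le \frac{2c_n}{\gamma_1}\int_{\partial U\cap V}h\,\d\mathcal H^{n-1}\quad\text{for every compact }K\subset V\text{ and all large }k.
\]
The step that fails is the next one. For nonnegative measures with $\mu_k\weakstarto m_U^{n-1}$, upper semicontinuity on compact sets gives $\limsup_k\mu_k(K)\le m_U^{n-1}(K)$. That is the \emph{opposite} of the inequality $m_U^{n-1}(K)\le\limsup_k\mu_k(K)$ you invoke. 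Your inequality is false in general: mass of $\mu_k$ lying in $U\cap B_{\eps_k}(\partial U)\setminus K$ can accumulate on $\partial K\cap\partial U$, exactly as $\delta_{1/k}\weakstarto\delta_0$ with $K=\{0\}$.

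The repair is short, because your bound is uniform in $K$. Given a compact $K\subset V$, pick an open $W$ with $K\subset W\Subset V$ and use lower semicontinuity on open sets:
\[
m_U^{n-1}(K)\le m_U^{n-1}(W)\le\liminf_{k}\mu_k(W)\le\limsup_k\mu_k(\overline W)\le\frac{2c_n}{\gamma_1}\int_{\partial U\cap V}h\,\d\mathcal H^{n-1}.
\]
From there, inner regularity on $V$ and outer regularity of the finite measure $h\,\mathcal H^{n-1}\mres\partial U$ finish the argument as you describe, with the same constant $2c_n/\gamma_1$ that the paper obtains.

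\textbf{Comparison with the paper.} With this fix, your set-based localization is a valid alternative to the paper's argument. The paper proceeds as follows:
\begin{enumerate}[label=(\roman*)]
\item It multiplies the inequality of Lemma~\ref{lem: general bound with h} at $x_0$ by $\phi(x_0)$, for a nonnegative continuous test function $\phi$.
\item It absorbs the oscillation of $\phi$ on $B_{r_k}(x_0)$ into an error term $2\eps\int_{\partial U\cap B_{r_k}(x_0)}h\,\d\mathcal H^{n-1}$.
\item It sums over a single Besicovitch cover of all of $\partial U$.
\item It passes to the limit directly via $\int\phi\,\d\mu_k\to\int\phi\,\d m_U^{n-1}$. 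This is legitimate because every $\mu_k$ is supported in the fixed compact set $\overline U\subset\Omega$.
\end{enumerate}
Working with test functions means no semicontinuity or regularity of measures is needed, which is precisely where your version went wrong. Your route instead trades the uniform-continuity bookkeeping for the $K'$-enlargement and these measure-theoretic facts.

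\textbf{Minor point.} The cutoff detour at the start is unnecessary. Definition~\ref{def:precise_majorant} already builds $h$ from mollifications of the zero extension of $\FF$, so $F_k\le h$ holds pointwise and Lemma~\ref{lem: general bound with h} applies directly with $S=\partial U$.
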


\begin{proof}
  By Theorem~\ref{th: Estimate with two sided Minkowski}, we have $U \in \mathcal{O}_m$, and moreover by
  \eqref{eq:int_minkowski} from the proof of Theorem~\ref{th: Estimate with two sided Minkowski}, we have
  \begin{equation}
    \frac2{r_k} \int_{B_{r_k/2}(\partial U)} m \,\d x \leq C < \infty
  \end{equation} 
  for all $k$ sufficiently large.
  Therefore by passing to a (non-relabeled) subsequence, we obtain a limiting measure
  \begin{equation}
    \frac2{r_k} m\,\mathcal L^n \mres (U \cap B_{r_k/2}(\partial U)) \xrightharpoonup{\ \ast\ } m_U^{n-1} \quad\mbox{as $k \to \infty$}.
  \end{equation} 
  To show \eqref{eq:hm_measure_comparison} holds with this $m_U^{n-1}$, let $\phi \in \CC(\overline\Omega)$ be non-negative and fix $\eps>0$. By continuity of $\phi$, there exists $0<\delta<\dist(U,\partial\Omega)$ such that for all $x \in \partial U$ and $y \in B_{\delta}(x)$ we have $\lvert\phi(x) - \phi(y) \rvert<\eps$.

  By Lemma~\ref{lem: general bound with h}, there is $r_k \downarrow 0$ and $\gamma>0$ such that for $k$ sufficiently large, we have
  \begin{equation}\label{eq:hlemma_estimate}
    \int_{\partial U \cap B_{r_k}(x_0)} h \,\d\mathcal H^{n-1} \geq \frac{\gamma}{r_k} \int_{U \cap B_{r_k}(x_0)} m \,\d x \quad\mbox{for all $x_0 \in \partial U$}.
  \end{equation}
  Let $k_0 \in \mathbb N$ be such that $r_{k} \leq \delta$ and \eqref{eq:hlemma_estimate} holds for all $k \geq k_0$. Then, for such $k$ and each $x_0 \in \partial U$, multiplying both sides of \eqref{eq:hlemma_estimate} by $\phi(x_0)$ and using the continuity estimate we have
  \begin{equation}\label{eq:hestimate_phi}
    \begin{split}
      &\int_{\partial U \cap B_{r_k}(x_0)} \phi\:\! h \,\d\mathcal{H}^{n-1} 
      -\frac{\gamma}{r_k} \int_{U \cap B_{r_k}(x_0)} \phi \:\! m \,\d x \\ 
       &\quad\geq \int_{\partial U \cap B_{r_k}(x_0)} (\phi(x_0)-\eps) h \,\d\mathcal H^{n-1} + \frac{\gamma}{r_k} \int_{U \cap B_{r_k}(x_0)} (-\phi(x_0)-\eps)m \,\d x\\
      &\quad= - \eps \left[\int_{\partial U \cap B_{r_k}(x_0)} h \,\d\mathcal H^{n-1} + \frac{\gamma}{r_k} \int_{U \cap B_{r_k}(x_0)} m \,\d x\right] \\
      &\qquad+ \phi(x_0) \left[\int_{\partial U \cap B_{r_k}(x_0)} h \,\d\mathcal H^{n-1} - \frac{\gamma}{r_k} \int_{U \cap B_{r_k}(x_0)} m \,\d x\right] \\
      &\quad\geq - 2\eps \int_{\partial U \cap B_{r_k}(x_0)} h \,\d\mathcal H^{n-1},
    \end{split}
  \end{equation} 
  where we used \eqref{eq:hlemma_estimate} twice in the last line.
  Now since $\{B_{r_k}(x_0) : x_0 \in \partial U\}$ is a covering of $B_{r_k/2}(\partial U)$ by the Besicovitch covering theorem there exist finitely many points $\{x_i\}_{i=1}^N$ such that $\{B_{r_k}(x_i)\}$ cover $B_{r_k/2}(\partial U)$, where at most $c_n$ balls overlap at any given point.
  Therefore summing over \eqref{eq:hestimate_phi} with $x_i$ in place of $x_0$ we obtain
  \begin{equation}\label{eq:phi_estimate_error}
    \begin{split}
    \int_{\partial U} \phi\:\!h\,\d\mathcal{H}^{n-1} 
    &\geq \frac1{c_n} \sum_{i=1}^N \int_{\partial U \cap B_{r_k}(x_i)} \phi \:\!h \,\mathcal H^{n-1} \\
    &\geq \frac{\gamma}{c_nr_k} \sum_{i=1}^N \int_{U \cap B_{r_k}(x_i)} \phi \:\!m\,\d x - \frac{2\eps}{c_n}  \sum_{i=1}^N \int_{\partial U \cap B_{r_k}(x_i)} h \,\d \mathcal{H}^{n-1} \\
    &\geq \frac{\gamma}{c_nr_k} \int_{U \cap B_{r_k/2}(\partial U)} \phi \:\!m\,\d x - \frac{2\eps}{c_n^2} \int_{\partial U} h \,\d \mathcal{H}^{n-1}.
    \end{split}
  \end{equation} 
  As this holds for all $k \geq k_0$, sending $k \to \infty$ we infer that
  \begin{equation}
    \int_{\partial U} \phi\:\!h\,\d\mathcal{H}^{n-1} \geq \frac{\gamma}{2c_n}\int_{\partial U} \phi \,\d m_U^{n-1} - \frac{2\eps}{c_n^2} \int_{\partial U} h \,\d \mathcal H^{n-1}.
  \end{equation} 
  Since $\eps>0$ and $\phi$ were arbitrary, the estimate \eqref{eq:hm_measure_comparison} follows.
\end{proof}

Combining this with Lemma~\ref{lem:silhavy_admissibility}, we infer the following result concerning the associated normal traces.
Note that we do not assume that $\mathcal H^{n-1}(\partial U \setminus \rbdry U) = 0$, however we instead require that $h$ is integrable on $\partial U$.

\begin{corollary}
  Given an open set $\Omega \subset \mathbb R^n$, let $\FF \in \mathcal{DM}^1(\Omega)$ with precise majorant $h$, and suppose $U \Subset \Omega$ is a set of finite perimeter such that $\partial U$ satisfies the uniform lower density condition.
  If we have
  \begin{equation}
    \int_{\partial U} h \,\d \mathcal H^{n-1} < \infty,
  \end{equation} 
  then the normal trace of $\FF$ on $\partial U$ is represented by a measure such that
  \begin{equation}
    |(\FF \cdot \nu)_{\partial U}| \leq c(n,\gamma_0) h\,\mathcal{H}^{n-1} \mres \partial U.
  \end{equation} 
\end{corollary}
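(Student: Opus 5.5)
The plan is to combine the Minkowski-type estimate with \v{S}ilhav\'y's representation result, Lemma~\ref{lem:silhavy_admissibility}, and to obtain the pointwise comparison of measures by localizing as in the proof of Theorem~\ref{thm:flux_comparison}.

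\textbf{Step 1: $U$ is Minkowski admissible.} Since $U \Subset \Omega$, we extend $\FF$ by zero outside a compact neighbourhood of $\overline U$; this should not affect $h$ near $\partial U$, up to choosing the mollification scales small. Theorem~\ref{th: Estimate with two sided Minkowski} with $S=\partial U$ then gives
\begin{equation*}
\liminf_{\eps\downarrow0}\frac1{2\eps}\int_{B_\eps(\partial U)}|\FF|\,\d x \leq \gamma^{-1}\int_{\partial U} h\,\d\mathcal H^{n-1}<\infty .
\end{equation*}
More precisely, estimate \eqref{eq:int_minkowski} shows that along the specific sequence $\eps_k=r_k/2$ from Lemma~\ref{lem: general bound with h} we have
\begin{equation*}
\sup_k \frac1{\eps_k}\int_{U\cap B_{\eps_k}(\partial U)}|\FF|\,\d x<\infty .
\end{equation*}
Since $|\FF\cdot\nabla d_{\partial U}|\le|\FF|$ a.e., condition \eqref{eq:silhavy_condition} holds. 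Lemma~\ref{lem:silhavy_admissibility} then shows that the normal trace is a measure, obtained as a weak$^*$ limit along some subsequence.

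\textbf{Step 2: matching sequences.} The trace representation in Lemma~\ref{lem:silhavy_admissibility} holds along some sequence $\eps_k$. The argument of \v{S}ilhav\'y should work along any sequence with bounded $\frac1{\eps_k}\int|\FF|$: the weak$^*$ limit of $-\frac1{\eps_k}\nabla d_{\partial U}\cdot\FF\,\mathcal L^n\mres(U\cap B_{\eps_k}(\partial U))$ always equals the trace. To see this, test with $\phi(1-\min(d_{\partial U}/\eps,1))$ in the Gauss--Green formula. We therefore take a further subsequence of $\eps_k=r_k/2$ along which both the signed measures above and $\frac1{\eps_k}|\FF|\,\mathcal L^n\mres(U\cap B_{\eps_k}(\partial U))$ converge weakly$^*$, the latter to some $m_U^{n-1}$. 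Then
\begin{equation*}
|(\FF\cdot\nu)_{\partial U}|\le m_U^{n-1}.
\end{equation*}

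\textbf{Step 3: comparison with $h$.} Repeating the localized argument of the proof of Theorem~\ref{thm:flux_comparison} gives $m_U^{n-1}\le c\,h\,\mathcal H^{n-1}\mres\partial U$. That argument consists of \eqref{eq:hlemma_estimate}, multiplication by a nonnegative continuous $\phi$, the uniform continuity error term, a Besicovitch covering and passage to the limit. It only uses that $\partial U$ satisfies the lower density condition and that $\int_{\partial U}h<\infty$. It never needs $\mathcal H^{n-1}(\partial U\setminus\rbdry U)=0$, because all integrals are over the full topological boundary $\partial U$. Combining this with Step 2 gives the claim with $c=c(n,\gamma_0)$.

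\textbf{Main obstacle.} The delicate point is Step 2: the sequence along which the trace is identified must be the same sequence along which the bound by $h$ holds. I would handle it by reproving the identification along an arbitrary bounded sequence using the Lipschitz cutoff $1-\min(d_{\partial U}/\eps,1)$. One also needs a minor check that the zero extension of $\FF$ does not alter the precise majorant on $\partial U$.
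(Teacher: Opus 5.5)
Your proposal is correct and follows the paper's own route: the paper obtains this corollary by combining the localized argument of Theorem~\ref{thm:flux_comparison} with Lemma~\ref{lem:silhavy_admissibility}, where $\int_{\partial U} h\,\d\mathcal H^{n-1}<\infty$ replaces the hypotheses $U\in\mathcal P_h$ and $\mathcal H^{n-1}(\partial U)=\mathcal H^{n-1}(\rbdry U)$. Your Step~2 identifies the trace along every sequence with bounded $\frac1{\eps_k}\int_{U\cap B_{\eps_k}(\partial U)}\lvert\FF\rvert\,\d x$, using your Lipschitz cutoff in $d_{\partial U}/\eps$, and this correctly supplies the sequence-matching detail the paper leaves implicit; also, no truncation of $\FF$ is needed in Step~1, since Definition~\ref{def:precise_majorant} already extends $\FF$ by zero outside $\Omega$ and only $F_k\le h$ on $\partial U$ for large $k$ is used.
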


\begin{remark}
For our arguments, it is essential that $h$ takes the specific form from Definition~\ref{def:precise_majorant}. In particular, it is not known whether the aforementioned results hold if $h$ merely satisfies \eqref{eq:h_property1} and \eqref{eq:h_property2}.
\end{remark}

In comparison, as a consequence of Theorem~\ref{thm:prescribed_trace}, the $\mathcal O_m$-condition does not impose any restriction on the flux.

\begin{theorem}
    Let $U \subset \mathbb R^n$ be a bounded Lipschitz domain, and $\sigma \in \mathcal M(\partial U)$.
    Then there exists a Cauchy flux $\mathcal F$ on $\mathbb R^n$ for which $U \in \mathcal O_{m}$ and we have
    \begin{equation}
      \mathcal F_{U}(S) = \sigma(S) \quad\mbox{for all Borel subsets $S \subset \partial U$.}
    \end{equation}
\end{theorem}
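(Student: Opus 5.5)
The proof combines Theorem~\ref{thm:prescribed_trace} with the converse part of Theorem~\ref{thm:flux_equiv_2}.

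\textbf{Step 1 (the field).} Given $\sigma \in \mathcal M(\partial U)$, we apply Theorem~\ref{thm:prescribed_trace} with $\Omega = U$. This produces $\FF \in \LL^1(\mathbb R^n;\mathbb R^n)$ with $\div \FF \in \LL^1(\mathbb R^n)$, and in particular $\FF \in \mathcal{DM}^1(\mathbb R^n)$. The field satisfies the trace identity \eqref{eq:prescribed_trace} and the Minkowski bound \eqref{eq:prescribed_minkowski}.

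\textbf{Step 2 (the flux).} We set $m = \lvert \FF\rvert$ and take the production to be $\div \FF$. We then define $\mathcal F_V(S) = (\FF\cdot\nu)_{\partial V}(S)$ for $V \in \mathcal O_m$ and Borel sets $S\subset\partial V$. By the converse part of Theorem~\ref{thm:flux_equiv_2}, $\mathcal F$ is a Cauchy flux on $\mathbb R^n$ in the sense of Definition~\ref{def:cauchyflux2}.

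\textbf{Step 3 ($U \in \mathcal O_m$).} This follows from \eqref{eq:prescribed_minkowski}, since
\begin{equation*}
  \liminf_{\eps\downarrow0}\frac1\eps\int_{U\cap B_\eps(\partial U)} m\,\d x \leq \limsup_{\eps\downarrow0}\frac1\eps\int_{B_\eps(\partial U)}\lvert\FF\rvert\,\d x \leq 2C\lvert\sigma\rvert(\partial U)<\infty.
\end{equation*}

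\textbf{Step 4 (identifying the flux on $U$).} Since $U\in\mathcal O_m$, Lemma~\ref{lem:silhavy_admissibility} shows that the normal trace of $\FF$ on $\partial U$ is a measure, so $(\FF\cdot\nu)_{\partial U}$ makes sense. The functional identity \eqref{eq:prescribed_trace} says $\langle \FF\cdot\nu,\phi\rangle_{\partial U} = \int_{\partial U}\phi\,\d\sigma$. This holds for $\phi \in \CC^1_{\rmc}$, and by density for all Lipschitz bounded $\phi$. By uniqueness of the representing measure (Riesz), we get $(\FF\cdot\nu)_{\partial U} = \sigma$ as measures on $\partial U$. Hence $\mathcal F_U(S) = \sigma(S)$ for every Borel $S\subset\partial U$.

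\textbf{Main obstacle.} The delicate point is matching the setting of Theorem~\ref{thm:flux_equiv_2}, which is stated for $\FF\in\mathcal{DM}^1(\Omega)$ on an open $\Omega$; here we take $\Omega = \mathbb R^n$.

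\emph{Integrability.} $\FF$ is compactly supported and lies in $\LL^1$, and its divergence is an $\LL^1$ function, so it is a finite measure.

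\emph{Admissible class.} The class $\mathcal O_m$ requires $U\Subset\Omega$ in the relevant applications. This is automatic because $U$ is bounded.

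\emph{Uniqueness of the measure.} If extending the trace identity from smooth test functions to Borel sets needs care, we use the fact that the trace functional is supported on $\partial U$. Two finite measures on the compact set $\partial U$ that agree on restrictions of $\CC^1_{\rmc}(\mathbb R^n)$ functions coincide, because these restrictions are dense in $\CC(\partial U)$.
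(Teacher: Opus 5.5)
Your proposal is correct and follows the paper's argument, which simply combines Theorem~\ref{thm:prescribed_trace} (applied with $\Omega = U$) with the converse part of Theorem~\ref{thm:flux_equiv_2} for $\FF \in \mathcal{DM}^1(\mathbb R^n)$, $m = \lvert \FF\rvert$ and production $\div\FF$. You have written out the details the paper leaves implicit: \eqref{eq:prescribed_minkowski} gives $U \in \mathcal O_m$, and \eqref{eq:prescribed_trace} identifies $(\FF\cdot\nu)_{\partial U}$ with $\sigma$ on Borel subsets of $\partial U$.
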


This follows by combining Theorem~\ref{thm:prescribed_trace} and Theorem~\ref{thm:flux_equiv_2}.
Thus, at least when working with regular open sets (Lipschitz regularity is sufficient), the $\mathcal O_m$ class appears to be less restrictive, allowing for cases where the trace can be any measure along $\partial U$.

\appendix
\section{Improved integrability of the potential}\label{sec:appendix_improvement}

In this section we will sketch the proof behind Remark~\ref{rem:improvement_integrability}.
It suffices to establish an analogue in the half-space setting considered in Lemma~\ref{lem:G_potential}, which we formulate as follows:

\begin{lemma}\label{eq:better_p}
  Let $1<p<\infty$ and suppose that $\sigma \in \WW^{-\frac1p,p}(\mathbb R^{n-1})$. Then if we define $\GG$ as in \eqref{eq:G_potential}, we have $\GG \in \LL^p(\mathbb R^{n-1} \times (-T,T))$ for all $T>0$.
\end{lemma}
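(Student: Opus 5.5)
The plan is to prove the estimate by duality against $\LL^{p'}$ test fields, using the trace characterization of $\WW^{1-\frac1{p'},p'}$ (equivalently, that $\WW^{-\frac1p,p}(\mathbb R^{n-1})$ is the dual of the trace space $\WW^{\frac1p,p'}(\mathbb R^{n-1}) = \operatorname{Tr}\WW^{1,p'}(\mathbb R^n_+)$, where $\frac1p+\frac1{p'}=1$). By the symmetry $\GG(y',-y_n)$ versus $\GG(y',y_n)$, it suffices to bound $\GG$ on $\mathbb R^{n-1}\times(0,T)$.

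\textbf{Step 1: the test functions.} Fix $\boldsymbol{\psi} \in \CC^\infty_{\rmc}(\mathbb R^{n-1}\times(0,T);\mathbb R^n)$. By Fubini we have
\begin{equation*}
  \int \GG\cdot\boldsymbol{\psi}\,\d y = \int_{\mathbb R^{n-1}} w(z')\,\d\sigma(z'), \qquad w(z') := \int_0^T\int_{\mathbb R^{n-1}} \Phi(y'-z',y_n)\cdot\boldsymbol{\psi}(y',y_n)\,\d y'\,\d y_n .
\end{equation*}
The goal is then to show
\begin{equation*}
  \lVert w\rVert_{\WW^{\frac1p,p'}(\mathbb R^{n-1})} \leq C(T)\lVert\boldsymbol{\psi}\rVert_{\LL^{p'}}.
\end{equation*}
Once this holds, density and duality give $\lVert\GG\rVert_{\LL^p}\leq C\lVert\sigma\rVert_{\WW^{-\frac1p,p}}$. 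When $\sigma$ is only a distribution, $\GG$ is defined by this same pairing, and it coincides with \eqref{eq:G_potential} for measures.

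\textbf{Step 2: realizing $w$ as a trace.} Since $\lvert\Phi(x',t)\rvert\lesssim t^{1-n}\mathbbm 1_{\{\lvert x'\rvert<t\}}$, we can write $w(z') = \int_0^T (K_t\ast\boldsymbol\psi(\cdot,t))(z')\,\d t$, where $K_t$ is the $t$-rescaling of a bounded, compactly supported kernel of unit mass. To express $w$ as a trace, I would introduce the extension
\begin{equation*}
  W(z',s) := \int_s^{T} (K_t\ast\boldsymbol\psi(\cdot,t))(z')\,\d t, \qquad s\in(0,T),
\end{equation*}
so that $W(\cdot,0^+)=w$. It then suffices to show $W\in\WW^{1,p'}(\mathbb R^{n-1}\times(0,T))$ with norm controlled by $\lVert\boldsymbol\psi\rVert_{\LL^{p'}}$, and to apply the trace theorem.

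The derivative bounds come from two terms:
\begin{itemize}
  \item $\partial_s W = -K_s\ast\boldsymbol\psi(\cdot,s)$, which is bounded in $\LL^{p'}$ by Young's inequality.
  \item $\nabla_{z'}W = \int_s^T (\nabla K)_t\ast\boldsymbol\psi(\cdot,t)\,t^{-1}\,\d t$. This is a Hardy-type averaging operator in the variable $t$ with kernel $1/t$ on $t>s$, and it maps $\LL^{p'}(0,T)$ to itself by Hardy's inequality. Note that the dual Hardy inequality $\int_0^\infty\lvert\int_s^\infty f(t)\frac{\d t}{t}\rvert^{q}\,\d s\leq q^q\int_0^\infty\lvert f\rvert^q\,\d s$ holds for all $1\le q<\infty$.
\end{itemize}
Combining these with the $\LL^{p'}$ boundedness of the mollification in $z'$ gives $\lVert\nabla W\rVert_{\LL^{p'}}\lesssim\lVert\boldsymbol\psi\rVert_{\LL^{p'}}$. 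The bound $\lVert W\rVert_{\LL^{p'}}\lesssim T\lVert\boldsymbol\psi\rVert_{\LL^{p'}}$ is immediate.

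\textbf{Main obstacle.} The main difficulty is making the Hardy step rigorous, specifically the interplay between the $t$-dependent mollifier $(\nabla K)_t$ and the $\LL^{p'}$ norm in $z'$. I would first apply Minkowski's inequality to pull the $z'$-norm inside the $t$-integral, giving a bound by $\int_s^T\lVert\boldsymbol\psi(\cdot,t)\rVert_{\LL^{p'}}t^{-1}\,\d t$. Hardy's inequality in $s$ then finishes the bound. This is just Gagliardo's extension argument run in the dual direction.

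The case $p'$ close to $1$ (that is, $p$ large) needs no separate care, since the trace theorem $\WW^{1,q}\to\WW^{1-\frac1q,q}$ holds for all $1<q<\infty$. Finally, the statement for $\mathbb R^{n-1}\times(-T,0)$ follows by the reflection identity for $\Phi$.
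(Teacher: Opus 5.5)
Your proof is correct, but it takes the dual route to the paper's. The paper works on the side of $\sigma$. It uses Fourier multipliers on Besov spaces (a Bessel-potential lift and the multipliers $\rho_j$, $M$) to write $\sigma = f_0+\sum_j\partial_j f_j$ with $f_j\in\WW^{1-\frac1p,p}(\mathbb R^{n-1})$, and moves the derivatives onto $\Phi$. It then bounds the $f_0$-term by Young's inequality. Each $\partial_j\Phi(\cdot,y_n)\ast f_j$ is bounded pointwise by $\lvert y_n\rvert^{-n}\int_{B_{\lvert y_n\rvert}(y')}\lvert f_j(z')-f_j(y')\rvert\,\d z'$ via the cancellation $\int\partial_j\Phi\,\d z'=0$, and the Gagliardo computation from Leoni's extension theorem then produces the $\WW^{1-\frac1p,p}$ seminorm.

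You instead test against $\boldsymbol\psi\in\CC^\infty_{\rmc}$ and identify $\WW^{-\frac1p,p}$ with the dual of $\WW^{\frac1p,p'}=\operatorname{Tr}\WW^{1,p'}$. You then show that the adjoint image $w$ is the trace of the explicit extension $W(z',s)=\int_s^T K_t\ast\boldsymbol\psi(\cdot,t)\,\d t$. Its $s$-derivative is a mollification, and its $z'$-gradient is a dual Hardy average.

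The details hold up:
\begin{itemize}
\item $\Phi(\cdot,t)=t^{1-n}\kappa(\cdot/t)$ is an $\LL^1$-normalized dilate of a smooth compactly supported $\kappa$, so $\nabla_{z'}$ produces exactly the factor $t^{-1}$.
\item Minkowski's inequality plus the dual Hardy inequality (valid for all $1\le p'<\infty$) close the gradient bound.
\item Since $\boldsymbol\psi$ vanishes near $s=0$, $W$ is constant there, so $w\in\CC^\infty_{\rmc}$ and $\int w\,\d\sigma$ is the genuine duality pairing.
\end{itemize}
Your remark that $K$ has unit mass holds only for the normal component; the tangential components $-x_j\rho(x')$ have mean zero. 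This is harmless, since only $\lVert K\rVert_{\LL^1}$ and $\lVert\nabla K\rVert_{\LL^1}$ are used.

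Your route dispenses with the Fourier-multiplier decomposition and the cancellation argument. It needs only the trace theorem for $\WW^{1,p'}$, Besov duality and Hardy's inequality, and it mirrors how $\WW^{-\frac1p,p}$ arises as a normal-trace space. The paper's route is more explicit and pointwise. It also shows that the terms coming from $\partial_j f_j$ lie in $\LL^p$ on all of $\mathbb R^n$, with the $T$-dependence confined to the $f_0$-term.
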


Once this is established, the rest of the proof of Theorem~\ref{thm:prescribed_trace} can be carried out with this $p$, noting in the final step that the Bogovskii operator is bounded on $\LL^p$ for all $1<p<\infty$.
Here $\WW^{s,p}(\mathbb R^{n-1})$ are understood as the Sobolev--Slobodeckij spaces, which for non-integer $s$ coincide with the Besov spaces $\BB^{s,p}_p(\mathbb R^{n-1})$ (see \cite[Section 2.5.7]{Triebel1983}).
The proof will make use of the Fourier transform, which for $u \in \mathscr{S}(\mathbb R^{n-1})$ is defined as
\begin{equation}
  \hat{u}(\xi) = \int_{\mathbb R^{n-1}} u(x) \mathrm{e}^{-\mathrm{i} \xi \cdot x} \,\d x,
\end{equation} 
extended to $\mathscr{S}'(\mathbb R^{n-1})$ by duality. In particular, $\widehat{\partial_{x_j}u} = \mathrm{i} \xi_j \hat{u}$.

\begin{proof}
  We will represent $\sigma$ as a sum of derivatives of functions in $\WW^{1-\frac1p,p}(\mathbb R^{n-1})$, to which we can employ estimates analogous to the extension results in \cite[Theorem~18.28]{Leoni2017}.

  \textbf{Step 1}: We claim that, if $\sigma \in \WW^{-\frac1p,p}(\mathbb R^{n-1})$, then there exist $f_0,f_1,\ldots,f_{n-1} \in \WW^{1-\frac1p,p}(\mathbb R^{n-1})$ such that
  \begin{equation}\label{eq:sigma_representation}
  \sigma = f_0 + \sum_{j=1}^{n-1} \partial_{x_j} f_j \quad\mbox{as distributions in } \mathbb R^{n-1},
\end{equation} 
with the equivalence of norms 
\begin{equation}
  \lVert \sigma\rVert_{\WW^{-\frac1p,p}(\mathbb R^{n-1})} \simeq \sum_{j=0}^{n-1} \lVert f_j \rVert_{\WW^{1-\frac1p,p}(\mathbb R^{n-1})}.
\end{equation} 

To achieve this, we define $g$ via the Fourier transform as
\begin{equation}
  \hat{g}(\xi) = (1+\lvert \xi\rvert^2)^{-\frac12} \hat{\sigma}(\xi).
\end{equation} 
Then, by \cite[Section 2.4.8]{Triebel1983} we have $g \in \WW^{1-\frac1p,p}(\mathbb R^{n-1})$ with the associated estimate
\begin{equation}
  \lVert g \rVert_{\WW^{1-\frac1p,p}(\mathbb R^{n-1})} \leq C \lVert \sigma \rVert_{\WW^{-\frac1p,p}(\mathbb R^{n-1})}.
\end{equation} 
Moreover, as in \cite[Section 2.4.8]{Triebel1983}, we take Fourier multipliers $\rho_j$ for $\WW^{1-\frac1p,p}(\mathbb R^{n-1})$ such that $1 + \sum_{j=1}^{n-1}\rho_j(\xi)\xi_j \geq (1+\lvert \xi\rvert^2)^{\frac12}$ and define
\begin{equation}
  M(\xi) = \frac{(1+\lvert \xi\rvert^2)^{\frac12}}{1 + \sum_{j=1}^{n-1} \rho_j(\xi) \xi_j},
\end{equation} 
which is also a Fourier multiplier for $\WW^{1-\frac1p,p}(\mathbb R^{n-1})$ by \cite[Section 2.3.7]{Triebel1983}.
More explicitly, as in \cite[Theorem~6.2.3]{BerghLofstrom1976} we can take a cutoff $\chi \in \CC^{\infty}(\mathbb R)$ such that $1_{(-1,1)} \leq 1 - \chi \leq 1_{(-2,2)}$ and set $\rho_i(\xi) = \chi(\xi_j) \frac{\xi_j}{\lvert \xi\rvert}$.

We define $f_0, \cdots, f_{n-1}$ via their Fourier transforms as
\begin{equation}
  \hat{f}_0(\xi) = M(\xi) \hat{g}(\xi), \quad \hat{f}_j(\xi) = \mathrm{i}^{-1} M(\xi) \rho_j(\xi) \hat{g}(\xi) \quad\mbox{for } j = 1,\ldots,n-1.
\end{equation} 
We can then write
\begin{equation}
  \hat{\sigma} = \hat{f}_0 + \sum_{j=1}^{n-1} \mathrm{i}\xi_j\hat{f}_j,
\end{equation}  
which gives the claimed representation \eqref{eq:sigma_representation},
and the above Fourier multiplier estimates imply that
\begin{equation}
  \sum_{i=0}^{n-1} \lVert f_j \rVert_{\WW^{1-\frac1p,p}(\mathbb R^{n-1})} \leq c \lVert g \rVert_{\WW^{1-\frac1p,p}(\mathbb R^{n-1})} \leq c \lVert \sigma \rVert_{\WW^{-\frac1p,p}(\mathbb R^{n-1})},
\end{equation} 
from which the claimed estimate follows.

\textbf{Step 2}: 
Using the representation from the first step, we can write the field $\GG$ from \eqref{eq:G_potential} as
\begin{equation*}
  \begin{split}
    G(y',y_n) &= \int_{\mathbb R^{n-1}} \Phi(y'-z',y_n) f_0(z')\,\d z' - \sum_{j=1}^{n-1} \int_{\mathbb R^{n-1}} \partial_j \Phi(y'-z',y_n) f_j(z') \,\d z' \\
              &=: G_0(y',y_n) + \sum_{j=1}^{n-1} G_j(y',y_n).
  \end{split}
  \end{equation*} 
  We now show that each term lies in $\LL^p$.
  For the first term, we use Young's convolution inequality as in \eqref{eq:G_integrability_estimate} to bound 
  \begin{equation}
    \begin{split}
    \int_{-T}^T \int_{\mathbb R^{n-1}} \lvert G_0(y',y_n)\rvert^p \,\d y'\,\d y_n 
    &\leq c \int_{-T}^T \lVert \Phi(\cdot,y_n)\rVert_{\LL^1(\mathbb R^{n-1})}^p \lVert f_0 \rVert_{\LL^p(\mathbb R^{n-1})}^{p} \,\d y_n \\
    &\leq c T\lVert f_0 \rVert_{\LL^p(\mathbb R^{n-1})}^{p}.
  \end{split}
  \end{equation} 
  Now assume that $1 \leq j \leq n-1$.
  Since $\int_{\mathbb R^{n-1}} \partial_j \Phi(y'-z',y_n) \,\d z' = 0$ and since $\Phi(\cdot,y_n)$ is supported on $B_{\lvert y_n\rvert}^{n-1}(0)$ satisfying $\lvert \partial_j \Phi(y',y_n) \rvert \leq c \lvert y_n \rvert^{-n}$, we infer the pointwise estimate
  \begin{equation}
    \lvert G_j(y',y_n) \rvert \leq \frac{c}{\lvert y_n\rvert^n} \int_{B_{\lvert y_n\rvert }^{n-1}(y')} \lvert f_j(z') - f_j(y')\rvert \,\d z'.
  \end{equation} 
  Hence taking $\LL^p$-norms on both sides and arguing as in \cite[Theorem~18.28]{Leoni2017}, we can bound
  \begin{equation}
    \begin{split}
      \lVert G_j \rVert_{\LL^p(\mathbb R^{n}_+)}^p
      &\leq c \int_{\mathbb R^n_+} \frac1{y_n^{n-1+p}} \int_{B_{y_n}^{n-1}(y')}\lvert f_j(z')-f_j(y')\rvert^p \,\d z' \,\d y \\
      &= c \int_{\mathbb R^{n-1}}\int_{\mathbb R^{n-1}} \lvert f_j(z')-f_j(y')\rvert^p \bigg[\int_{\lvert y'-z'\rvert}^{\infty} \frac1{y_n^{n-1+p}} \,\d y_n\bigg] \,\d z'\,\d y'\\
      &=  \frac{c}{n+p-2}\int_{\mathbb R^{n-1}}\int_{\mathbb R^{n-1}} \frac{|f_j(z')-f_j(y')|^{p}}{\lvert y'-z'\rvert^{n-1+(1-\frac1p)p}} \,\d z' \,\d y',
    \end{split}
  \end{equation} 
  and similarly on $\mathbb R^n_-$ to infer that
  \begin{equation}
    \lVert G_j\rVert_{\LL^p(\mathbb R^n)} \leq c \lVert f_j \rVert_{\WW^{1-\frac1p,p}(\mathbb R^{n-1})}.
  \end{equation} 
    Hence it follows that $G \in \LL^p(\mathbb R^{n-1} \times (-T,T))$, as required.
\end{proof}

\addcontentsline{toc}{section}{Acknowledgements}
\phantomsection
\noindent\subsection*{Acknowledgements}
The authors thank David Wiedemann for insightful discussions on the Piola transform and related topics. AS also thanks Giacomo Del Nin for valuable discussions. Part of this work was carried out while CI was a postdoc at TU Dortmund, and when AS was visiting in May 2025. AS thanks the Max Planck Institute for Mathematics in the Sciences (MiS) in Leipzig for its hospitality and productive research environment.

\printbibliography

@article{AmbrosioColesantiVilla2008,
  title = {Outer {{Minkowski}} Content for Some Classes of Closed Sets},
  author = {Ambrosio, Luigi and Colesanti, Andrea and Villa, Elena},
  date = {2008-12-01},
  journaltitle = {Mathematische Annalen},
  shortjournal = {Math. Ann.},
  volume = {342},
  number = {4},
  pages = {727--748},
  issn = {1432-1807},
  doi = {10.1007/s00208-008-0254-z},
  url = {https://doi.org/10.1007/s00208-008-0254-z},
  urldate = {2025-12-10},
  langid = {english}
}

@book{AmbrosioEtAl2000,
  title = {Functions of Bounded Variation and Free Discontinuity Problems},
  author = {Ambrosio, Luigi and Fusco, Nicola and Pallara, Diego},
  year = {2000},
  pages = {434},
  publisher = {{Clarendon Press}},
  isbn = {978-0-19-850245-6}
}

@article{BanfiFabrizio1979,
  title = {Sul Concetto di Sottocorpo nella Meccanica dei Continui},
  author = {Banfi, Carlo and Fabrizio, Mauro},
  date = {1979},
  journaltitle = {Atti della Accademia Nazionale dei Lincei. Classe di Scienze Fisiche, Matematiche e Naturali. Rendiconti},
  shortjournal = {Rend. Acc. Naz. Lincei},
  volume = {66},
  number = {2},
  pages = {136--142},
  publisher = {Accademia Nazionale dei Lincei},
  issn = {0392-7881},
  url = {https://eudml.org/doc/288621},
  urldate = {2026-03-25},
  langid = {italian}
}

@book{BerghLofstrom1976,
  title = {Interpolation Spaces},
  author = {Bergh, Jöran and Löfström, Jörgen},
  date = {1976},
  series = {Grundlehren Der Mathematischen Wissenschaften},
  volume = {223},
  publisher = {Springer Berlin Heidelberg},
  location = {Berlin, Heidelberg},
  doi = {10.1007/978-3-642-66451-9},
  url = {http://link.springer.com/10.1007/978-3-642-66451-9},
  isbn = {978-3-642-66453-3}
}

@article{Bogovskii1979,
  title = {Solution of the First Boundary Value Problem for the Equation of Continuity of an Incompressible Medium},
  author = {Bogovskii, M. E.},
  date = {1979},
  journaltitle = {Dokl. Akad. Nauk SSSR},
  volume = {248},
  pages = {1037--1040},
  url = {https://cir.nii.ac.jp/crid/1573950400271528192},
  urldate = {2026-01-12}
}

@article{Boussinesq1878,
    author = {Boussinesq., J.},
    title = {{\'{E}}quilibre d’{\'{e}}lasticit{\'{e}} d’un sol isotrope sans pesanteur, supportant diff{\'{e}}rents poids},
    journal = {C. R. Acad. Sci. Paris},
    volume = {86},
    pages = {1260–1263},
    year = {1878}
}

@book{Brezis2011,
  title = {Functional Analysis, {{Sobolev}} Spaces and Partial Differential Equations},
  author = {Brezis, Haim},
  year = {2011},
  publisher = {{Springer New York}},
  address = {{New York, NY}},
  doi = {10.1007/978-0-387-70914-7},
  urldate = {2022-08-07},
%  isbn = {978-0-387-70913-0 978-0-387-70914-7},
  langid = {english}
}

@article{ComiCrastaDeCiccoMalusa2024,
  title = {Representation Formulas for Pairings between Divergence-Measure Fields and \emph{BV} Functions},
  author = {Comi, Giovanni E. and Crasta, Graziano and De Cicco, Virginia and Malusa, Annalisa},
  date = {2024-01-01},
  journaltitle = {Journal of Functional Analysis},
  shortjournal = {J. Funct. Anal.},
  volume = {286},
  number = {1},
  pages = {110192},
  issn = {0022-1236},
  doi = {10.1016/j.jfa.2023.110192},
  url = {https://www.sciencedirect.com/science/article/pii/S002212362300349X},
  urldate = {2025-02-24}
}

@article{ChenFrid1999,
  title = {Divergence‐{{Measure Fields}} and {{Hyperbolic Conservation Laws}}},
  author = {Chen, Gui-Qiang and Frid, Hermano},
  date = {1999-06-01},
  journaltitle = {Archive for Rational Mechanics and Analysis},
  shortjournal = {Arch Rational Mech Anal},
  volume = {147},
  number = {2},
  pages = {89--118},
  issn = {1432-0673},
  doi = {10.1007/s002050050146},
  url = {https://doi.org/10.1007/s002050050146},
  urldate = {2023-11-30},
  langid = {english}
}

@article{ChenFrid2003,
  title = {Extended {{Divergence-Measure Fields}} and the {{Euler Equations}} for {{Gas Dynamics}}},
  author = {Chen, Gui-Qiang and Frid, Hermano},
  date = {2003-05-01},
  journaltitle = {Communications in Mathematical Physics},
  shortjournal = {Commun. Math. Phys.},
  volume = {236},
  number = {2},
  pages = {251--280},
  issn = {1432-0916},
  doi = {10.1007/s00220-003-0823-7},
  url = {https://doi.org/10.1007/s00220-003-0823-7},
  urldate = {2023-11-30},
  langid = {english}
}

@article{ChenComiTorres2019,
    title = {Cauchy Fluxes and Gauss–Green Formulas for Divergence-Measure Fields Over General Open Sets},
    author = {Chen, Gui-Qiang and Comi, Giovanni Eugenio and Torres, Monica}, 
    journaltitle = {Archive for Rational Mechanics and Analysis},
    shortjournal = {Arch. Rational Mech. Anal.},
    volume={233},
    number={1},
    year = {2019},
    pages = {87–166},
    doi = {10.1007/s00205-018-01355-4}
}

@article{ChenIrvingTorres2025,
  title = {Extended {{Divergence-Measure Fields}}, the {{Gauss-Green Formula}} and {{Cauchy Fluxes}}},
  author = {Chen, Gui-Qiang G. and Irving, Christopher and Torres, Monica},
  date = {2025-11-25},
  journaltitle = {Archive for Rational Mechanics and Analysis},
  shortjournal = {Arch Rational Mech Anal},
  volume = {249},
  number = {6},
  pages = {79},
  issn = {1432-0673},
  doi = {10.1007/s00205-025-02135-7},
  url = {https://doi.org/10.1007/s00205-025-02135-7},
  urldate = {2025-12-10},
  langid = {english}
}

@article{ChenLiTorres2020,
% ISSN = {00222518, 19435258},
 URL = {https://www.jstor.org/stable/26959170},
 author = {Gui-Qiang Chen and Qinfeng Li and Monica Torres},
 journaltitle = {Indiana University Mathematics Journal},
 shortjournal={Indiana Univ. Math. J.},
 number = {1},
 pages = {pp. 229--264},
 publisher = {Indiana University Mathematics Department},
 title = {Traces and Extensions of Bounded Divergence-Measure Fields on Rough Open Sets},
 urldate = {2024-11-20},
 volume = {69},
 year = {2020},
 doi = {10.1512/iumj.2020.69.8375}
}

@article{CrastaDeCicco2019,
  title = {Anzellotti's Pairing Theory and the Gauss–Green Theorem},
  author = {Crasta, Graziano and De Cicco, Virginia},
  date = {2019-02-05},
  journaltitle = {Advances in Mathematics},
  shortjournal = {Adv. Math.},
  volume = {343},
  pages = {935--970},
  issn = {0001-8708},
  doi = {10.1016/j.aim.2018.12.007},
  url = {https://www.sciencedirect.com/science/article/pii/S0001870818304997},
  urldate = {2025-02-24}
}

@article{ChenTorres2005,
  title = {Divergence-Measure Fields, Sets of Finite Perimeter, and Conservation Laws},
  author = {Chen, Gui-Qiang and Torres, Monica},
  date = {2005-02-01},
  journaltitle = {Archive for Rational Mechanics and Analysis},
  shortjournal = {Arch. Rational Mech. Anal.},
  volume = {175},
  number = {2},
  pages = {245--267},
  issn = {1432-0673},
  doi = {10.1007/s00205-004-0346-1},
  url = {https://doi.org/10.1007/s00205-004-0346-1},
  urldate = {2023-11-30},
  langid = {english}
}

@article{ChenTorresZiemer2009,
author = {Chen, Gui-Qiang and Torres, Monica and Ziemer, William P.},
title = {Gauss-Green theorem for weakly differentiable vector fields, sets of finite perimeter, and balance laws},
journaltitle = {Communications on Pure and Applied Mathematics},
shortjournal={Comm. Pure Appl. Math.},
volume = {62},
number = {2},
pages = {242-304},
doi = {10.1002/cpa.20262},
url = {https://onlinelibrary.wiley.com/doi/abs/10.1002/cpa.20262},
%eprint = {https://onlinelibrary.wiley.com/doi/pdf/10.1002/cpa.20262},
year = {2009}
}

@article{DegiovanniEtAl1999,
  title = {Cauchy {{Fluxes Associated}} with {{Tensor Fields Having Divergence Measure}}},
  author = {Degiovanni, Marco and Marzocchi, Alfredo and Musesti, Alessandro},
  year = {1999},
  month = aug,
  journaltitle = {Archive for Rational Mechanics and Analysis},
  shortjournal = {Arch. Rational Mech. Anal.},
  volume = {147},
  number = {3},
  pages = {197--223},
  issn = {1432-0673},
  doi = {10.1007/s002050050149},
  urldate = {2023-11-30},
  langid = {english}
}

@book{Federer1996,
  title = {Geometric Measure Theory},
  author = {Federer, Herbert},
  editor = {Eckmann, B. and family=Waerden, given=B. L., prefix=van der, useprefix=true},
  date = {1996},
  series = {Classics in Mathematics},
  publisher = {Springer Berlin Heidelberg},
  location = {Berlin, Heidelberg},
  doi = {10.1007/978-3-642-62010-2},
  url = {http://link.springer.com/10.1007/978-3-642-62010-2},
  isbn = {978-3-540-60656-7}
}

@article{Flamant1892,
    author = {Flamant, A.},
    title = {Sur la r\'epartition des pressions dans un solide rectangulaire charg\'e transversalement.},
    journal = {C. R. Acad. Sci. Paris},
    volume = {114},
    pages = {1465–1468},
    year = {1892}
}

@article{GurtinMartins1976,
  title = {Cauchy's Theorem in Classical Physics},
  author = {Gurtin, Morton E. and Martins, Luiz C.},
  date = {1976-12-01},
  journaltitle = {Archive for Rational Mechanics and Analysis},
  shortjournal = {Arch. Rational Mech. Anal.},
  volume = {60},
  number = {4},
  pages = {305--324},
  issn = {1432-0673},
  doi = {10.1007/BF00248882},
  url = {https://doi.org/10.1007/BF00248882},
  urldate = {2025-09-17},
  langid = {english}
}

@online{Irving2025,
  title = {On the Normal Trace Space of Extended Divergence-Measure Fields},
  author = {Irving, Christopher},
  date = {2025-03-12},
  eprint = {2503.09536},
  eprinttype = {arXiv},
  eprintclass = {math},
  doi = {10.48550/arXiv.2503.09536},
%  url = {http://arxiv.org/abs/2503.09536},
  pubstate = {Preprint.}
}

@article{Kraft2016,
    author = {Kraft, Daniel},
    title =   {Measure-Theoretic Properties of Level Sets of Distance Functions},
    journaltitle = {Journal of Geometric Analysis},
    shortjournal = {J. Geom. Anal.},
    volume = {26},
    year = {2016},
    pages = {2777–2796},
    doi = {10.1007/s12220-015-9648-9}
}

@book{Leoni2017,
  title = {A First Course in {{Sobolev}} Spaces},
  author = {Leoni, Giovanni},
  date = {2017},
  series = {Graduate Studies in Mathematics},
  edition = {2nd ed},
  number = {181},
  publisher = {AMS},
  location = {Providence},
  isbn = {978-1-4704-2921-8},
  langid = {english}
}

@article{MarzocchiMusesti2001,
    author = {Marzocchi, A. and Musesti, A.},
    title = {Decomposition and integral representation of Cauchy interactions associated with measures},
    journal = {Continuum Mech. Thermodyn.},
    volume = {13},
    pages = {149–169},
    year = {2001},
    url = {https://doi.org/10.1007/s001610100046}
}

@article{MitreaTaylor2000,
  title = {Potential Theory on Lipschitz Domains in Riemannian Manifolds: Sobolev–Besov Space Results and the Poisson Problem},
  shorttitle = {Potential Theory on Lipschitz Domains in Riemannian Manifolds},
  author = {Mitrea, Marius and Taylor, Michael},
  date = {2000-09-10},
  journaltitle = {Journal of Functional Analysis},
  shortjournal = {Journal of Functional Analysis},
  volume = {176},
  number = {1},
  pages = {1--79},
  issn = {0022-1236},
  doi = {10.1006/jfan.2000.3619},
  url = {https://www.sciencedirect.com/science/article/pii/S002212360093619X},
  urldate = {2026-04-22}
}

@incollection{Noll1959,
  title = {The {{Foundations}} of {{Classical Mechanics}} in the {{Light}} of {{Recent Advances}} in {{Continuum Mechanics}}},
  booktitle = {Studies in {{Logic}} and the {{Foundations}} of {{Mathematics}}},
  author = {Noll, Walter},
  editor = {Henkin, Leon and Suppes, Patrick and Tarski, Alfred},
  date = {1959-01-01},
  series = {The {{Axiomatic Method}}},
  volume = {27},
  pages = {266--281},
  publisher = {Elsevier},
  doi = {10.1016/S0049-237X(09)70033-3},
  url = {https://www.sciencedirect.com/science/article/pii/S0049237X09700333},
  urldate = {2025-09-17}
}

@article{Podio-Guidugli2005,
    author = {Podio-Guidugli, Paolo},
    title = {Examples of concentrated contact interactions in simple bodies.},
    journal = {J. Elasticity},
    volume = {75},
    number = {(2)},
    pages = {167–186},
    doi = {10.1007/s10659-005-3029-8},
    year = {2005}
}

@article{Podio-GuidugliSchuricht2012,
    author = {Podio-Guidugli, P. and Schuricht, F.},
    title = {Concentrated actions on cuspidate plane bodies.},
    journal = {J. Elasticity},
    volume = {106},
    number = {no. 2},
    pages = {107–114},
    year = {2012},
    doi = {10.1007/s10659-010-9295-0}
}

@book{SchurichtSchonherr2025,
  title = {A {{Theory}} of {{Traces}} and the {{Divergence Theorem}}},
  author = {Schuricht, Friedemann and Schönherr, Moritz},
  date = {2025},
  series = {Lecture {{Notes}} in {{Mathematics}}},
  volume = {2372},
  publisher = {Springer Nature Switzerland},
  location = {Cham},
  doi = {10.1007/978-3-031-86664-7},
  url = {https://link.springer.com/10.1007/978-3-031-86664-7},
  urldate = {2025-09-17},
  isbn = {978-3-031-86663-0 978-3-031-86664-7},
  langid = {english}
}

@article{Schuricht2007a,
  title = {A {{New Mathematical Foundation}} for {{Contact Interactions}} in {{Continuum Physics}}},
  author = {Schuricht, Friedemann},
  year = {2007},
  month = jun,
  journaltitle = {Archive for Rational Mechanics and Analysis},
  shortjournal = {Arch. Rational Mech. Anal.},
  volume = {184},
  number = {3},
  pages = {495--551},
  issn = {1432-0673},
  doi = {10.1007/s00205-006-0032-6},
  urldate = {2023-11-30},
  langid = {english}
}

@inbook{Schuricht2008,
  title = {Interactions in Continuum Physics},
  booktitle = {Mathematical Modelling of Bodies with Complicated Bulk and Boundary Behavior},
  author = {Schuricht, Friedemann},
  date = {2008},
  series = {Quaderni Di Matematica},
  volume = {20},
  pages = {169--196},
  publisher = {Aracne},
  location = {Napoli},
  bookauthor = {Šilhavý, Miroslav},
  isbn = {978-88-548-1956-6}
}

@article{Silhavy1985,
  title = {The Existence of the Flux Vector and the Divergence Theorem for General {{Cauchy}} Fluxes},
  author = {Šilhavy, Miroslav},
  date = {1985-09-01},
  journaltitle = {Archive for Rational Mechanics and Analysis},
  shortjournal = {Arch. Rational Mech. Anal.},
  volume = {90},
  number = {3},
  pages = {195--212},
  issn = {1432-0673},
  doi = {10.1007/BF00251730},
  url = {https://doi.org/10.1007/BF00251730},
  urldate = {2025-09-17},
  langid = {english}
}

@article{Silhavy1991,
  title = {Cauchy's Stress Theorem and Tensor Fields with Divergences in {{$L^p$}}},
  author = {{\v S}ilhav{\'y}, M.},
  year = {1991},
  month = sep,
  journaltitle = {Archive for Rational Mechanics and Analysis},
  shortjournal = {Arch. Rational Mech. Anal.},
  volume = {116},
  number = {3},
  pages = {223--255},
  issn = {1432-0673},
  doi = {10.1007/BF00375122},
  urldate = {2023-11-30},
  langid = {english}
}

@article{Silhavy2005,
    title = {Divergence Measure Fields and Cauchy's Stress Theorem},
    author = {{\v S}ilhav{\'y}, M.},
    shortjournal = {Rend. Sem. Mat. Univ. Padova},
    volume = {113},
    year = {2005},
    pages = {15--45}
}

@article{Silhavy2009,
  title = {The {{Divergence Theorem}} for {{Divergence Measure Vectorfields}} on {{Sets}} with {{Fractal Boundaries}}},
  author = {{\v S}ilhav{\'y}, M.},
  year = {2009},
  month = jul,
  journaltitle = {Mathematics and Mechanics of Solids},
  volume = {14},
  number = {5},
  pages = {445--455},
  publisher = {{SAGE Publications Ltd STM}},
  issn = {1081-2865},
  doi = {10.1177/1081286507081960},
  urldate = {2023-11-30},
  langid = {english}
}

@book{Triebel1983,
  title = {Theory of Function Spaces},
  author = {Triebel, Hans},
  date = {1983},
  publisher = {Springer Basel},
  location = {Basel},
  doi = {10.1007/978-3-0346-0416-1},
  url = {http://link.springer.com/10.1007/978-3-0346-0416-1},
  isbn = {978-3-0346-0415-4}
}

@article{Villa2009,
  title = {On the Outer {{Minkowski}} Content of Sets},
  author = {Villa, Elena},
  date = {2009-09-01},
  journaltitle = {Annali di Matematica Pura ed Applicata},
  shortjournal = {Annali di Matematica},
  volume = {188},
  number = {4},
  pages = {619--630},
  issn = {1618-1891},
  doi = {10.1007/s10231-008-0093-2},
  url = {https://doi.org/10.1007/s10231-008-0093-2},
  urldate = {2025-12-10},
  langid = {english}
}

@book{Ziemer1989,
  title = {Weakly Differentiable Functions},
  author = {Ziemer, William P.},
  date = {1989},
  series = {Graduate {{Texts}} in {{Mathematics}}},
  volume = {120},
  publisher = {Springer New York},
  location = {New York, NY},
  doi = {10.1007/978-1-4612-1015-3},
  url = {http://link.springer.com/10.1007/978-1-4612-1015-3},
  urldate = {2021-11-24},
  isbn = {978-1-4612-6985-4 978-1-4612-1015-3}
}

\end{document}